\documentclass[12pt,centertags,oneside]{amsart}
\usepackage{amsmath,amstext,amsthm,amscd,typearea,hyperref}
\usepackage{amssymb}
\usepackage{a4wide}
\usepackage[mathscr]{eucal}
\usepackage{mathrsfs}
\usepackage{typearea}
\usepackage{charter}
\usepackage{pdfsync}
\usepackage[a4paper,width=16.5cm,top=3cm,bottom=3cm]{geometry}

\numberwithin{equation}{section}

\allowdisplaybreaks
\tolerance=1
\emergencystretch=\maxdimen
\hyphenpenalty=10000
\hbadness=10000

\usepackage{multicol}

\usepackage{xcolor}



\newtheorem{theorem}{Theorem}[section]
\newtheorem{definition}[theorem]{Definition}
\newtheorem{proposition}[theorem]{Proposition}
\newtheorem{corollary}[theorem]{Corollary}
\newtheorem{lemma}[theorem]{Lemma}

\newtheorem*{definition*}{Definition}

\newtheorem{mainthm}{Theorem}

\newcommand{\cali}[1]{\mathscr{#1}}

\newcommand{\GL}{{\rm GL}}

\newcommand{\supp}{{\rm supp}}

\newcommand{\diff}{{\rm d}}

\renewcommand{\Re}{\mathop{\mathrm{Re}}}

\renewcommand{\GL}{{\rm GL}}

\newcommand{\ep}{\epsilon}

\newcommand{\Cc}{\cali{C}}

\newcommand{\Tc}{\cali{T}}

\newcommand{\B}{\mathbb{B}}

\newcommand{\C}{\mathbb{C}}

\newcommand{\N}{\mathbb{N}}

\newcommand{\R}{\mathbb{R}}

\renewcommand\P{\mathbb{P}}

\newcommand{\E}{\mathbf{E}}

\newcommand{\lp}{\langle}
\newcommand{\rp}{\rangle}

\newcommand{\norm}[1]{\lVert#1\rVert}

\newcommand{\oA}{\mathcal{A}}
\newcommand{\oB}{\mathcal{B}}
\newcommand{\oF}{\mathcal{F}}
\newcommand{\oP}{\mathcal{P}}
\newcommand{\oN}{\mathcal{N}}
\newcommand{\oR}{\mathcal{R}}
\newcommand{\oE}{\mathcal{E}}

\newcommand{\oQ}{\mathcal{Q}}
\newcommand{\oS}{\mathcal{S}}
\newcommand{\oL}{\mathcal{L}}



\usepackage{fancyhdr}

\pagestyle{fancy}
\fancyhf{}
\rhead{ \thepage }
\lhead{\textit{Berry-Esseen bound and LLT for the coefficients of products of random matrices}}


\title{Berry-Esseen bound and Local Limit Theorem for the coefficients \\ of products of random matrices}

\author{Tien-Cuong Dinh}
\address{Department of Mathematics,  National University of Singapore - 10, Lower Kent Ridge Road - Singapore 119076}
\email{matdtc@nus.edu.sg}

\author{Lucas Kaufmann}
\address{Center for Complex Geometry - Institute for Basic Science (IBS) - 55 Expo-ro Yuseong-gu Daejeon 34126 South Korea}
\email{lucaskaufmann@ibs.re.kr}

\author{Hao Wu}
\address{Department of Mathematics,  National University of Singapore - 10, Lower Kent Ridge Road - Singapore 119076}
\email{matwu@nus.edu.sg}

\thanks{This work was supported by the NUS and MOE grants  AcRF Tier 1 R-146-000-319-114 and MOE-T2EP20120-0010. L. Kaufmann was supported by the Institute for Basic Science (IBS-R032-D1)}

\begin{document}

\begin{abstract}
Let $\mu$ be a probability measure on $\GL_d(\R)$ and denote by $S_n:= g_n \cdots g_1$ the associated random matrix product, where $g_j$ are i.i.d.\ with law $\mu$. Under the assumptions that $\mu$ has a finite exponential moment and generates a proximal and strongly irreducible semigroup, we prove a Berry-Esseen bound with the optimal rate $O(1/\sqrt n)$ and a general Local Limit Theorem for the coefficients of $S_n$.
\end{abstract}

\clearpage\maketitle
\thispagestyle{empty}

\noindent\textbf{Keywords:}  products of random matrices, Berry-Esseen bound, local limit theorem.

\noindent\textbf{Mathematics Subject Classification 2020:} \texttt{60B15,60B20,60F99,37A30}.


\section{Introduction}

Let $\mu$ be a probability on $G:=\GL_d(\R)$, $d \geq 2$.  Then, $\mu$ induces a random walk on $G$ by letting  $$S_n: = g_n \cdots g_1,$$ where  $n \geq 1$ and the $g_j$'s are independent and identically distributed random elements of $G$ with law given by $\mu$.  The study of these random processes and associated limit theorems has a rich history,  starting from seminal works of Furstenberg and Kesten \cite{furstenberg-kesten,furstenberg} leading to important progress since then.  This topic is still very active, with important new results and techniques being recently discovered.  We refer to \cite{bougerol-lacroix,benoist-quint:book} for an overview. See also below for some recent results.

We consider the standard linear action of $G$ on $\R^d$ and the induced action on the real projective space $\P^{d-1}$.  Denote by $\| v \|$ the standard euclidean norm of $v \in \R^d$ and,  for $g \in G$, let $\|g\|$ be the associated operator norm.

In order to study the random matrices $S_n$,   it is useful to look at associated real-valued random variables.  An important function in this setting  is the \textit{norm cocycle},  defined by $$\sigma(g,x) = \log \frac{\norm{gv}}{\norm{v}}, \quad \text{for }\,\, v \in \R^d \setminus \{0\}, \, x = [v] \in \P^{d-1}   \, \text{ and } g \in G.$$
The cocycle relation $\sigma(g_2g_1,x) = \sigma(g_2,g_1 \cdot x) + \sigma(g_1,x)$ can be used  to effectively apply methods such as the spectral theory of complex transfer operators (see Subection \ref{subsec:markov-op}) and martingale approximation \cite{benoist-quint:CLT}.  Some other significant quantities are: the norm $\|g\|$,  the spectral radius $\rho(g)$ and the coefficients of $g$, the latter being object of this article.

The goal of this work is to obtain two new limit theorems for the coefficients of $S_n$ as $n$ tends to infinity.  For $v \in \R^d$ and $f \in (\R^d)^*$,  its dual space,  we denote by $\lp f,v \rp := f(v)$ their natural coupling.  Observe that the $(i,j)$-entry of a matrix $g$ is given by $\lp e_i^* , g  e_j \rp$,  where $(e_k)_{1\leq k \leq d}$ (resp. $(e^*_k)_{1\leq k \leq d}$) denotes the canonical basis of $\R^d$ (resp.$(\R^d)^*$).   Our results will apply,  more generally,  to the random variables of the form
$$\log{ |\lp f, S_n v\rp | \over \norm{f} \norm{v}},$$
with  $v \in \R^d \setminus \{0\}$ and $f \in (\R^d)^* \setminus \{0\}$.

In order to obtain meaningful results,  some standard assumptions on the measure $\mu$ need to be made.   Recall that a matrix $g\in G$ is said to be \textit{proximal} if it admits a unique eigenvalue of maximal modulus which is moreover of multiplicity one.  Let  $\Gamma_\mu$ be the smallest closed semigroup containing the support of $\mu$.  We assume that $\Gamma_\mu$ is \textit{proximal}, that is,  it contains a proximal matrix,  and  \textit{strongly irreducible},  that is,  the action of $\Gamma_\mu$ on $\R^d$ does not preserve a finite union of proper linear subspaces.  It is well-known that, under the above conditions, $\mu$ admits a unique stationary probability measure on $\P^{d-1}$, see Section \ref{sec:prelim}. 

We'll also assume that $\mu$ has a \textit{finite exponential moment}, that is,  $\int_{G} N(g)^\varepsilon \diff\mu(g) < \infty$ for some $\varepsilon>0$, where $N(g):=\max\big( \norm{g},\norm{g^{-1}} \big)$. 

\medskip

Our first result is a Berry-Esseen bound with rate $O(1/ \sqrt n)$ for the coefficients,  which is a quantitative version of the Central Limit Theorem (CLT).   For the CLT for the coefficients without convergence rate,  see \cite{benoist-quint:book}.  The \textit{first Lyapunov exponent} of $\mu$ is,  by definition, the number
$$\gamma := \lim_{n \to \infty} \frac1n \int \log\|g_n \cdots g_1\| \, \diff \mu(g_1) \cdots \diff \mu(g_n).$$

\begin{mainthm}\label{thm:BE-coeff}
	Let $\mu$ be a  probability measure on $\GL_d(\R)$.  Assume that $\mu$ has a finite exponential moment and  that $\Gamma_\mu$ is proximal and strongly irreducible.  Let $\gamma$ be the associated first Lyapunov exponent. Then, there is a constant $C>0$ and a real number $\varrho > 0$,  such that, for any $x:=[v]\in \P^{d-1}, y:=[f]\in (\P^{d-1})^*$, any interval $J\subset\R$, and all $n\geq 1$, we have
	$$\bigg| \mathbf P \Big(   \log{ |\lp f, S_n v\rp | \over \norm{f} \norm{v}} - n \gamma\in \sqrt n J \Big)  -  \frac{1}{\sqrt{2 \pi} \, \varrho}  \int_{J} e^{-\frac{s^2}{2 \varrho^2}} \, \diff s \bigg| \leq \frac{C}{\sqrt n}.$$
\end{mainthm}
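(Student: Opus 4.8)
The plan is to reduce the theorem to an asymptotic analysis of the complex transfer operators $P_{it}\varphi(x)=\int_G e^{it\sigma(g,x)}\varphi(g\cdot x)\,\diff\mu(g)$, $t\in\R$, acting on functions on $\P^{d-1}$. Normalizing, assume $\|v\|=\|f\|=1$ and set $Y_n:=\log|\lp f,S_nv\rp|$, $x=[v]$, $y=[f]$, and let $y^\perp=\{z\in\P^{d-1}:\lp f,z\rp=0\}$ be the associated hyperplane. Since $\lp f,S_nv\rp=\|S_nv\|\,\lp f,\overline{S_nv}\rp$ and $\log\|S_nv\|=\sum_{k=1}^n\sigma(g_k,X_{k-1})$ with $X_k:=S_k\cdot x$, conditioning step by step gives the exact identity
\[
\mathbf E\bigl[e^{itY_n}\bigr]=\mathbf E\bigl[e^{it\sigma(S_n,x)}\,\phi_{f,t}(X_n)\bigr]=\bigl(P_{it}^{n}\phi_{f,t}\bigr)(x),\qquad \phi_{f,t}(z):=|\lp f,z\rp|^{it}.
\]
Hence everything comes down to the asymptotics of $P_{it}^{n}\phi_{f,t}$, \emph{uniformly in $f$} and for $t$ in a fixed neighbourhood of $0$ --- by the Esseen smoothing inequality only the range $|t|\le\delta\sqrt n$ is needed after rescaling $t\mapsto t/\sqrt n$. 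The delicate point is that $\phi_{f,t}$, though of modulus one, has a logarithmic singularity along $y^\perp$ and is not Hölder, so the spectral theory of $P_{it}$ cannot be applied to it directly.

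I would deal with this by a cut-off. For $|t|\le\delta$ there is a spectral decomposition $P_{it}^{n}=\lambda_{it}^{n}\,\Pi_{it}+Q_{it}^{n}$, with $\lambda_{it}$ the dominant eigenvalue (analytic, $\lambda_0=1$, $\log\lambda_{it}=it\gamma-\tfrac12\varrho^2t^2+O(t^3)$, $\Re\log\lambda_{it}\le-\tfrac14\varrho^2t^2$), $\Pi_{it}\psi=\lp\nu_{it},\psi\rp h_{it}$ the rank-one projection ($h_{it}$ Hölder with $h_0\equiv1$, $\nu_{it}$ the eigen-measure with $\nu_0=\nu$), and $\|Q_{it}^{n}\psi\|_\gamma\le C\rho^n\|\psi\|_\gamma$ with $\rho<1$, all uniform in $t$. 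Pick $\chi_r$ equal to $1$ on the $r$-neighbourhood of $y^\perp$, supported in its $2r$-neighbourhood, with $\|\chi_r\|_\gamma\lesssim r^{-\gamma}$; then $(1-\chi_r)\phi_{f,t}$ lies in the Hölder space with norm $\lesssim(1+|t|)^\gamma r^{-\gamma}$, while $|\chi_r\phi_{f,t}|\le\ind_{\{\dist(\cdot,y^\perp)<2r\}}$. Applying the decomposition to $(1-\chi_r)\phi_{f,t}$, bounding $|P_{it}^{n}(\chi_r\phi_{f,t})(x)|\le(\mu^{*n}*\delta_x)\bigl(\{\dist(\cdot,y^\perp)<2r\}\bigr)$ and $|\lp\nu_{it},\chi_r\phi_{f,t}\rp|\le\nu_{it}\bigl(\{\dist(\cdot,y^\perp)<2r\}\bigr)$ by Hölder regularity, and choosing $r=\rho^{\theta n}$ with $\theta>0$ small, one gets, for some $c>0$ and uniformly in $x,y$,
\[
\bigl(P_{it}^{n}\phi_{f,t}\bigr)(x)=\lambda_{it}^{n}\,\lp\nu_{it},\phi_{f,t}\rp\,h_{it}(x)+O\bigl(\rho^{cn}(1+|t|)^\gamma\bigr),\qquad|t|\le\delta.
\]

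From the expansion $\phi_{f,t}(z)=1+it\log|\lp f,z\rp|+O\bigl(t^2(\log|\lp f,z\rp|)^2\bigr)$ and the uniform moment bounds $\sup_{|t|\le\delta}\sup_f\int|\lp f,z\rp|^{-\kappa}\,\diff\nu_{it}(z)<\infty$, one obtains $\lp\nu_{it},\phi_{f,t}\rp=1+O(t)$ uniformly in $f$ and $h_{it}(x)=1+O(t)$ uniformly in $x$; a similar analysis gives $\mathbf E[Y_n]=n\gamma+O(1)$ and $\mathrm{Var}(Y_n)=n\varrho^2+O(\sqrt n)$, uniformly in $x,y$ (so the fluctuations are carried by the radial part, and $\varrho$ is the asymptotic standard deviation of the norm cocycle --- hence a single constant suffices in the statement). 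Substituting $t\mapsto t/\sqrt n$ and using $\bigl|\lambda_{it/\sqrt n}^{n}e^{-it\sqrt n\,\gamma}-e^{-\varrho^2t^2/2}\bigr|\le C|t|^3n^{-1/2}e^{-\varrho^2t^2/4}$ for $|t|\le\delta\sqrt n$, the characteristic function of $(Y_n-n\gamma)/\sqrt n$ is $e^{-\varrho^2t^2/2}+\Ec_n(t)$ with $|\Ec_n(t)|\le C(1+|t|^3)n^{-1/2}e^{-\varrho^2t^2/4}+C\rho^{cn}$ on $|t|\le\delta\sqrt n$. Passing to the standardized variable $(Y_n-\mathbf E Y_n)/\sqrt{\mathrm{Var}(Y_n)}$ removes the degree $\le 2$ part of $\Ec_n$, so the Esseen inequality with $T=\delta\sqrt n$ bounds its Kolmogorov distance to $N(0,1)$ by $C/\sqrt n$; transferring back via $\mathbf E Y_n=n\gamma+O(1)$ and $\mathrm{Var}(Y_n)=n\varrho^2+O(\sqrt n)$ adds a further $O(1/\sqrt n)$. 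This is the half-line estimate; the statement for an arbitrary interval $J$ follows by differencing the distribution functions at its endpoints, and the finitely many small $n$ are absorbed into $C$. Finally $\varrho>0$ under the standing proximality, strong irreducibility and exponential-moment hypotheses.

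The real content, beyond this (by now classical) Berry--Esseen machinery, is twofold: first, handling the non-Hölder observable $\phi_{f,t}$, carried out by the cut-off above; and second --- the genuinely technical heart --- establishing the regularity estimates \emph{uniformly in the data}, namely $\sup_{|t|\le\delta}\sup_f\nu_{it}\bigl(\{\dist(\cdot,y^\perp)<r\}\bigr)\le C\,r^{\kappa}$ and $\sup_{|t|\le\delta}\sup_f\int|\lp f,z\rp|^{-\kappa}\,\diff\nu_{it}(z)\le C$, together with the corresponding regularity of the $n$-step laws $\mu^{*n}*\delta_x$ uniform in $n,x,f$. These require running Guivarc'h-type regularity arguments for the whole family of twisted operators $P_{it}$ and reconciling them with the analytic perturbation theory of $P_{it}$ near $t=0$; it is precisely the uniformity in $f$ that upgrades a pointwise central limit theorem to the uniform Berry--Esseen bound of the theorem.
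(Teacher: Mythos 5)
Your reduction to the transfer operator is correct and is in the same spirit as the paper: the exact identity $\mathbf E\big[e^{itY_n}\big]=\oP_{it}^n\phi_{f,t}(x)$ with $\phi_{f,t}(z)=d(z,H_y)^{it}$ is just the Fourier side of the paper's splitting $\log{|\lp f,S_nv\rp|\over\norm f\norm v}=\sigma(S_n,x)+\log d(S_nx,H_y)$, and your cut-off near $H_y$, the large-deviation bound for $\mathbf P\big(d(S_nx,H_y)<2r\big)$, the Le Page spectral estimates and the Esseen smoothing step all match ingredients actually used in Section 3. The genuine gap is in your treatment of the rank-one (projection) term. After cutting off, you need $\big|\lp\nu_{it},\chi_r\phi_{f,t}\rp\big|\lesssim r^\kappa$ and $\lp\nu_{it},\phi_{f,t}\rp=1+O(t)$ \emph{uniformly} in $|t|\le\delta$ and in $y$, and you justify these by Guivarc'h-type regularity and negative-moment bounds for the twisted eigenfunctionals $\nu_{it}$. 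But for real $t\neq 0$ the functional $\nu_{it}$ produced by perturbation theory is only an element of $\big(\Cc^\alpha(\P^{d-1})\big)^*$; it carries no positivity and is not known to be (representable by) a measure, so the inequalities $|\lp\nu_{it},\chi_r\phi_{f,t}\rp|\le\nu_{it}\big(\B(H_y,2r)\big)$ and $\int d(z,H_y)^{-\kappa}\diff\nu_{it}(z)<\infty$ are not even well-posed as written, let alone proved uniformly in $t$ and $y$. You flag exactly this as ``the genuinely technical heart'' and defer it, which means the central difficulty of the theorem is left unproven. Note also that the obvious fallback, $\lp\nu_{it}-\nu_0,\cdot\rp=O\big(|t|\,\norm{\cdot}_{\Cc^\alpha}\big)$, is killed by your choice $r=\rho^{\theta n}$: the cut-off observable has exponentially large H\"older norm, while the relevant frequencies go up to $|t|\le\delta$ fixed, so the perturbative bound gives nothing.

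This is precisely the obstruction the paper's construction is designed to avoid. There, the hyperplane is removed only at the polynomial scale $n^{-A}$ (large deviations already give probability $\lesssim n^{-1/2}$ for that region, which is all a Berry--Esseen rate needs), and $\log d(S_nx,H_y)$ is discretized through the partition of unity $(\chi_k)$, so the spectral machinery is only ever applied to the explicit test functions $\Phi_{n,\xi}+\Phi_n^\star=\mathbf 1+\sum_k(e^{i\xi k/\sqrt n}-1)\chi_k$, whose $\Cc^\alpha$-norm is $O(n^{\alpha A})\le O(n^{1/6})$ — and is even $O(1)$ for $|\xi|\le n^{1/6}$, which is what saves the optimal rate in Lemma \ref{lemma:theta-1-bound}. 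The only measure-theoretic regularity ever invoked is Guivarc'h's bound for the genuine stationary measure $\nu_0$ (Proposition \ref{prop:regularity}, used through $\oN_0\chi_k\lesssim e^{-k\eta}$ in Lemma \ref{lemma:theta-2-bound}); the twisted projections enter only via the operator-norm estimate $\norm{\oN_{i\xi/\sqrt n}-\oN_0}\lesssim|\xi|/\sqrt n$ applied to those tame functions. To make your route work you would either have to prove the uniform regularity of the family $\nu_{it}$ (a substantial new result, not available in the cited literature for imaginary parameters), or restructure the argument so that only $\nu_0$-regularity and operator-norm perturbation bounds are needed — which essentially lands you back on the paper's proof.
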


We observe that the rate $O(1/ \sqrt n)$ in the above theorem is optimal  as this is also the case for sums of real-valued i.i.d.'s.  Many related bounds for the other random variables associated with $S_n$ mentioned above can be found in the recent literature.  More details are given below.    

\medskip

Our second result is a Local Limit Theorem for the coefficients.

\begin{mainthm}\label{thm:LLT-coeff}
	Let $\mu$ be a  probability measure on $\GL_d(\R)$.  Assume that $\mu$ has a finite exponential moment and  that $\Gamma_\mu$ is proximal and strongly irreducible.  Let $\gamma$ be the associated first Lyapunov exponent.  Let $\varrho > 0$ be as in Theorem \ref{thm:BE-coeff}. Then,  for any $x:=[v]\in \P^{d-1}, y:=[f]\in (\P^{d-1})^*$ and any $-\infty<a<b<\infty$, we have
	$$\lim_{n\to \infty}\sup_{t\in\R}\bigg| \sqrt{n}  \, \mathbf P \Big(  t+ \log{ |\lp f, S_n v\rp | \over \norm{f} \norm{v}} - n \gamma\in [ a, b] \Big)  -   e^{-\frac{t^2}{2 \varrho^2 n}} {b-a\over \sqrt{2 \pi}\,\varrho} \bigg| =0.$$
Moreover, the convergence is uniform in $x \in \P^{d-1}$ and $y \in (\P^{d-1})^*$.
\end{mainthm}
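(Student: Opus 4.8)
\medskip
\noindent\textbf{Proof proposal.}
Fix $x:=[v]\in\P^{d-1}$ and $y:=[f]\in(\P^{d-1})^*$ and put $W_n:=\log\big(|\langle f,S_nv\rangle|/(\norm{f}\norm{v})\big)$. Let $x_k:=S_k\cdot x$ be the associated Markov chain on $\P^{d-1}$, with $x_0=x$. Writing $S_nv=\norm{S_nv}\,\widehat{S_nv}$ and using the cocycle relation for $\sigma$,
$$W_n=\sigma(S_n,x)+\tau_y(x_n)=\sum_{k=1}^{n}\sigma(g_k,x_{k-1})+\tau_y(x_n),\qquad \tau_y(z):=\log\frac{|\langle f,w\rangle|}{\norm{f}}\ \ \big([w]=z,\ \norm{w}=1\big),$$
so that $W_n$ is a Birkhoff sum of the norm cocycle along $(x_k)$ corrected by the fixed function $\tau_y$ at the endpoint. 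Here $\tau_y$ is bounded above and $\alpha$-Hölder away from the hyperplane section $H_y:=\{z:\langle f,z\rangle=0\}$, with only a logarithmic singularity along $H_y$ whose shape is uniform in $y$; moreover, by the regularity estimates for the random walk recalled in Section~\ref{sec:prelim}, $\mathbf P\big(\dist(x_n,H_y)<r\big)\le C\,r^{\alpha}$ for all $r>0$ and all $n$ large, uniformly in $x$ and $y$. The plan is the classical Fourier route to a local limit theorem, run through the complex transfer operators of Subsection~\ref{subsec:markov-op}, but carried out at scale $1$ (rather than at scale $\sqrt n$, as for Theorem~\ref{thm:BE-coeff}) and keeping the parameter $t$ throughout.

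\textbf{Step 1 (characteristic function).} For $z\in\C$ near $0$, let $P_z\phi(x'):=\int e^{z\sigma(g,x')}\phi(g\cdot x')\,\diff\mu(g)$ act on $\Cc^{\alpha}(\P^{d-1})$. The cocycle relation together with the Markov property gives, for every $s\in\R$,
$$\E\big[e^{isW_n}\big]=\big(P_{is}^{\,n}h_{s,y}\big)(x),\qquad h_{s,y}(z):=e^{is\tau_y(z)}.$$
The target $h_{s,y}$ has modulus one but is not Hölder; this is the only genuinely new feature compared with the norm cocycle. It is handled by decomposing $h_{s,y}=\widetilde h_{s,y,r}+(h_{s,y}-\widetilde h_{s,y,r})$, where $\widetilde h_{s,y,r}\in\Cc^{\alpha}$ coincides with $h_{s,y}$ off the $r$-neighbourhood of $H_y$ and has $\|\widetilde h_{s,y,r}\|_{\Cc^{\alpha}}$ bounded by a power of $1/r$, uniformly in $y$ and in $|s|\le T$; the remainder, after applying $P_{is}^{\,n}$ and evaluating at $x$, contributes at most $\E\big[|h_{s,y}-\widetilde h_{s,y,r}|(x_n)\big]\le 2\,\mathbf P\big(\dist(x_n,H_y)<r\big)\le 2C r^{\alpha}$. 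We shall take $r=r_n\to0$ polynomially, so that simultaneously $\sqrt n\,r_n^{\alpha}\to0$ and $\sqrt n\,\rho^{\,n}\|\widetilde h_{s,y,r_n}\|_{\Cc^{\alpha}}\to0$, for the $\rho<1$ furnished by Step~2.

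\textbf{Step 2 (spectral input).} For $|s|$ small, $P_{is}$ has a simple dominant eigenvalue $\lambda(is)$, analytic in $s$, with $\lambda(is)=\exp\!\big(i\gamma s-\tfrac12\varrho^2 s^2+O(|s|^3)\big)$ for the $\varrho>0$ of Theorem~\ref{thm:BE-coeff}, spectral projection $\Pi_{is}\psi=\ell_{is}(\psi)\,\phi_{is}$ with $(\phi_{is},\ell_{is})$ continuous in $s$, $\phi_0\equiv1$, $\ell_0=\nu$ the stationary measure, and $\|P_{is}^{\,n}-\lambda(is)^n\Pi_{is}\|_{\Cc^{\alpha}}\le C\rho^{\,n}$ with $\rho<1$ uniformly for $|s|$ small. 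Since $\ell_{is}$ is integration against a measure with the same Hölder regularity as $\nu$, the quantity $\ell_{is}(h_{s,y})$ is well defined, continuous in $s$, and $\ell_{is}(h_{s,y})\,\phi_{is}(x)\to1$ as $s\to0$ uniformly in $x$ and $y$. The second, decisive ingredient is the non-arithmeticity of the norm cocycle, valid under proximality and strong irreducibility: $P_{is}$ has spectral radius $<1$ on $\Cc^{\alpha}$ for every $s\neq0$, so, by continuity and compactness, for each $0<\eta<T$ there are $C_{\eta,T}>0$ and $\rho_{\eta,T}<1$ with $\sup_{\eta\le|s|\le T}\|P_{is}^{\,n}\|_{\Cc^{\alpha}}\le C_{\eta,T}\,\rho_{\eta,T}^{\,n}$.

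\textbf{Step 3 (local Fourier inversion with moving parameter).} By Beurling--Selberg majorants and minorants $\varphi^{\pm}$ of $\ind_{[a,b]}$ with $\widehat{\varphi^{\pm}}$ supported in $[-T,T]$ and $\widehat{\varphi^{\pm}}(0)=(b-a)\pm 1/T$, it suffices to control $\frac{\sqrt n}{2\pi}\int_{-T}^{T}\widehat{\varphi^{\pm}}(s)\,e^{ist}\,e^{-isn\gamma}\,\E[e^{isW_n}]\,\diff s$ uniformly in $t\in\R$. On $\eta\le|s|\le T$ the integrand is, by Steps~1--2, $O\big(\rho_{\eta,T}^{\,n}\|\widetilde h_{s,y,r_n}\|_{\Cc^{\alpha}}\big)+O(r_n^{\alpha})$, so its contribution is $o(1)$ uniformly in $t,x,y$ by the choice of $r_n$. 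On $|s|\le\eta$ one inserts
$$\E[e^{isW_n}]=\lambda(is)^n\,\ell_{is}(h_{s,y})\,\phi_{is}(x)+O\big(\rho^{\,n}\|\widetilde h_{s,y,r_n}\|_{\Cc^{\alpha}}\big)+O(r_n^{\alpha}),$$
uses $e^{-isn\gamma}\lambda(is)^n=e^{-\frac12\varrho^2 s^2 n+O(n|s|^3)}$, substitutes $s=u/\sqrt n$, and passes to the limit by dominated convergence, using $\widehat{\varphi^{\pm}}(u/\sqrt n)\to(b-a)\pm1/T$, $\ell_{iu/\sqrt n}(h_{u/\sqrt n,y})\,\phi_{iu/\sqrt n}(x)\to1$ and $e^{-\frac12\varrho^2u^2+O(|u|^3/\sqrt n)}\to e^{-\frac12\varrho^2u^2}$, while $|e^{iut/\sqrt n}|=1$: this contribution equals, up to an error that is $o(1)$ uniformly in $t,x,y$,
$$\frac{(b-a)\pm1/T}{2\pi}\int_{\R}e^{iut/\sqrt n}\,e^{-\frac12\varrho^2u^2}\,\diff u=\big((b-a)\pm1/T\big)\,\frac{e^{-t^2/(2\varrho^2n)}}{\sqrt{2\pi}\,\varrho}.$$
Sandwiching $\ind_{[a,b]}$ between $\varphi^-$ and $\varphi^+$ then yields $\limsup_{n\to\infty}\sup_{t\in\R}\big|\sqrt n\,\mathbf P(t+W_n-n\gamma\in[a,b])-e^{-t^2/(2\varrho^2n)}(b-a)/(\sqrt{2\pi}\varrho)\big|\le 1/(T\sqrt{2\pi}\varrho)$, and letting $T\to\infty$ gives the theorem; since every estimate above is uniform in $x$ and $y$, so is the conclusion. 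The main obstacle is precisely the tension, at the local scale, between the singular target $h_{s,y}$ and the spectral analysis: the truncation scale $r_n$ must be chosen so that the truncation error is still negligible after multiplication by $\sqrt n$ while the Hölder norm it creates remains beaten by the non-arithmetic decay on $\{\eta\le|s|\le T\}$; and one must keep all constants uniform in $x$, $y$ and in $t\in\R$.
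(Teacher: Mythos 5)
Your proposal follows the same overall skeleton as the paper (write the coefficient as $\sigma(S_n,x)+\log d(S_nx,H_y)$, run the Fourier/spectral method through the operators $\oP_{is}$ on $\Cc^\alpha(\P^{d-1})$, use the non-arithmeticity bound of Proposition \ref{prop:spec-Pxi} on $\eta\le|s|\le T$, large deviations for the proximity of $S_nx$ to $H_y$, and a sandwich of $\ind_{[a,b]}$ by functions with compactly supported Fourier transform), but the device for handling the singular term is genuinely different: you truncate the oscillating target $h_{s,y}=d(\cdot,H_y)^{is}$ at distance $r_n$ from $H_y$ to make it H\"older with norm $O((T/r_n)^\alpha)$, whereas the paper never lets a singular function near the transfer operator: it uses the partition of unity $(\chi_k)$ of Lemma \ref{lemma:partition-of-unity-2}, replaces $\log d$ by the constants $-k\zeta$ (at the price of the $\zeta$-widened piecewise-affine functions $\psi_{t,k}$), and adds the auxiliary function $\Phi_n^\star$ so that only the H\"older functions $\Phi_{n,\xi}+\Phi_n^\star$ are fed to $\oP_{i\xi/\sqrt n}^n$. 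Your route is shorter and keeps the parameter $t$ inside a single Fourier integral; the paper's route buys the fact that the leading spectral data are only ever evaluated on bona fide $\Cc^\alpha$ functions, which is exactly where your write-up has a gap.

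The gap is the displayed expansion containing $\ell_{is}(h_{s,y})$ and the claim that ``$\ell_{is}$ is integration against a measure with the same H\"older regularity as $\nu$''. For $s\neq0$, $\ell_{is}$ is only a bounded linear functional on $\Cc^\alpha$ produced by perturbation theory (Proposition \ref{prop:spectral-decomp}); it is complex-valued, it is not known to be represented by a measure with Furstenberg-type regularity, and $h_{s,y}$ is not in $\Cc^\alpha$ (it is not even continuous on $\P^{d-1}$), so neither the well-definedness of $\ell_{is}(h_{s,y})$ nor the uniform convergence $\ell_{is}(h_{s,y})\phi_{is}(x)\to1$ is justified by anything in the paper's toolkit. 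The fix stays inside your own framework: keep $\Pi_{is}\widetilde h_{s,y,r_n}$ throughout and show $\ell_{is}(\widetilde h_{s,y,r_n})\,\phi_{is}(x)=1+O\big(|s|\,r_n^{-\alpha}\big)+O(|s|)$ uniformly in $x,y$, using $\|\ell_{is}-\ell_0\|\lesssim|s|$ and $\|\phi_{is}-\mathbf 1\|_\infty\lesssim |s|$ (analyticity), $\|\widetilde h_{s,y,r_n}\|_{\Cc^\alpha}\lesssim (1+|s|/r_n)^\alpha$, and $\big|\nu(\widetilde h_{s,y,r_n}-\mathbf 1)\big|\lesssim|s|\sup_y\int|\log d(\cdot,H_y)|\,\diff\nu\lesssim|s|$, the last bound following from Proposition \ref{prop:regularity}; with $r_n=n^{-B}$ and $\alpha B<1/2$ (the paper takes $\alpha B\le 1/6$), after the substitution $s=u/\sqrt n$ this error is $O(|u|\,n^{\alpha B-1/2})$, dominated by the Gaussian factor and vanishing, so your Step 3 limit survives uniformly in $t$. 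Two smaller points: the uniform bound $\mathbf P(\dist(x_n,H_y)<r)\le Cr^\alpha$ for \emph{all} $r>0$ is more than Proposition \ref{prop:BQLDT} gives (it yields such a bound only down to radii of size $e^{-\ep n}$), but since you only use polynomial radii $r_n$ this is harmless; and since $\lp f,S_nv\rp$ may vanish with positive probability for finite $n$, you should restrict to the event $\{d(S_nx,H_y)\ge r_n\}$ (probability of the complement being $o(1/\sqrt n)$) before writing $\E[e^{isW_n}]$, rather than manipulating a possibly infinite $W_n$.
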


As discussed below,  Theorem \ref{thm:LLT-coeff} contains a recent result of  Grama-Quint-Xiao \cite{grama-quint-xiao}.
\medskip

\noindent \textbf{Related works.} As mentioned before, the  rate $O(1  / \sqrt n)$ in Theorem \ref{thm:BE-coeff} is optimal.  Before our work, Berry-Esseen bounds for the coefficients of $S_n$ were only known under strong positivity conditions on the matrices in the support of $\mu$, see \cite{xiao-grama-liu:norm-coeff}.   Under the assumptions of Theorem \ref{thm:BE-coeff},  it is known for a long time that one can obtain a Berry-Esseen bound for the norm cocycle $\sigma(S_n,x)$  with rate $O(1  / \sqrt n)$,  see \cite{lepage:theoremes-limites,bougerol-lacroix} and \cite{fernando-pene} for a refined version.  For the variables $\log \|S_n \|$ and $\rho(S_n)$,  the progress is more recent and in these cases a Berry-Esseen bound with  rate $O(1  / \sqrt n)$ is  known under strong positivity conditions   and, without such conditions, a $O(\log n  / \sqrt n)$ rate can be obtained, see \cite{xiao-grama-liu:hal,xiao-grama-liu:norm-coeff}.

The exponential moment condition in Theorem \ref{thm:BE-coeff} is stronger than what one should require.  Parallel to the case of sums of i.i.d.'s,  one should expect to have the same result under a third moment condition, that is, $\int_G \big( \log N(g) \big)^3 \, \diff \mu(g) < + \infty$.  This is unknown for the coefficients. Under this condition,  for the norm cocycle  $\sigma(S_n,x)$,  the best known rate is $O(n^{-1 \slash 4} \sqrt{\log n})$ obtained in \cite{cuny-dedecker-jan} using martingale approximation methods in the spirit of \cite{benoist-quint:CLT}.  This has been recently improved in  \cite{cuny-dedecker-merlevede-peligrad}  to a  $O(1  / \sqrt n)$ (resp. $O((\log n)^{1/2}n^{-1 \slash 2})$) rate under a fourth (resp.  third) moment condition.  See also \cite{jirak:BE1} for related results under low moment conditions.    In the particular case where $d=2$, the authors have obtained the optimal $O(1  / \sqrt n)$ rate under a third moment condition  \cite{DKW:LLT} .

Concerning the Local Limit Theorem (LLT),   Theorem \ref{thm:LLT-coeff} above strengthens a recent result of Grama-Quint-Xiao \cite{grama-quint-xiao}, which holds under the same hypothesis as Theorem \ref{thm:LLT-coeff},  but only for the parameter $t = 0$.  See also \cite{xiao-grama-liu:coeff-large-deviation} for related results and \cite{DKW:LLT} for the case $d=2$ under a third moment condition.  These limit theorems allow us to estimate the probability that the random variables  $ \frac{1}{\sqrt n} \big(\log{ |\lp f, S_n v \rp | \over \norm{f} \norm{v}} - n \gamma \big)$ fall on intervals of size $O(1  / \sqrt n)$ around the origin, while Theorem \ref{thm:LLT-coeff} works for intervals of size $O(1 /  \sqrt n)$ around an arbitrary point on the real line.  For the norm cocycle,  the  general LLT is due to Le Page \cite{lepage:theoremes-limites}. 

\medskip

\noindent \textbf{Overview  of the proofs.} When proving limit theorems for the coefficients,  the first step is to compare them with the norm cocycle via the elementary identity
\begin{equation*}
\log{ |\lp f, S_n v \rp | \over \norm{f} \norm{v}} = \sigma(S_n,x) + \log \Delta(S_n x,y),
\end{equation*}
where $\Delta(x,y):= \frac{ |\lp f, v \rp |}{\norm{f} \norm{v}}$.  One can  check that $\Delta(x,y) = d (x,H_y)$, where $H_y := \P(\ker f)$ is a hyperplane in $\P^{d-1}$ and $d$ is a natural distance on $\P^{d-1}$ (see Section \ref{sec:prelim}).   Then, we can use the above formula and work with the random variable $\sigma(S_n,x) +\log d(S_n x, H_y)$ instead of $\log{ |\lp f, S_n v \rp | \over \norm{f} \norm{v}} $.  The behaviour of $\sigma(S_n,x)$ can be studied via the perturbed Markov operators (see Subsection \ref{subsec:markov-op}). The term  $\log d(S_n x, H_y)$ is handled using some large deviation estimates  combined with a good partition of unity (see Lemmas \ref{lemma:partition-of-unity} and \ref{lemma:partition-of-unity-2}).   The latter is one of our key arguments,  applied to approximate the quantity $\sigma(S_n,x) + \log d(S_n x,H_y)$ by a sum of functions of two separate variables $\sigma(S_n,x)$ and $S_n x$, see also \cite{grama-quint-xiao}. We use a partition of $\P^{d-1} \setminus H_y$ by functions $(\chi_k)_{k \geq 0}$  subordinated to ``annuli'' around $H_y$ of the form  $ \big\{ w \in \P^{d-1} :\, e^{-k-1} < d(w,H_y) < e^{-k+1} \big\}$.  This allows us to have a good control on the errors in a ``uniform'' manner,   which is responsible for the sharp bounds.  In particular,  we don't need to use the zero-one law for algebraic subsets of $\P^{d-1}$ obtained in \cite{grama-quint-xiao},  which is a main ingredient in the proof of their version of the LLT. 

For most of our  estimates,  we strongly  rely on the spectral analysis of the Markov operator and its pertubations on a H\"older space $\Cc^\alpha(\P^{d-1})$  (see Subsection \ref{subsec:markov-op}).  It is crucial to choose $\alpha$ small in order to reduce the impact of the norm of $\chi_k$ when $k$ is large, see Lemmas \ref{lemma:norm-Phi} and  \ref{lemma:norm-Phi-2}.  A main difficulty that appeared in our computations is how to to handle the ``tail" of the approximation using $\chi_k$.  To overcome this problem, we introduce  an auxiliary function 
$$\Phi_{n}^{\star}  (w):= 1 - \sum_{0\leq k\leq A\log n} \chi_k(w)$$ for some well-chosen $A>0$,  which has negligible impact on the estimates but whose presence is helpful in the computations,  see e.g.  Lemmas \ref{lemma:theta-1-bound},  \ref{lemma:theta-2-bound} and \ref{lemma-R-S}.

Our approach can also be applied to the case of more general target functions.  More precisely, we can replace the probabilities in Theorems \ref{thm:BE-coeff} and \ref{thm:LLT-coeff} by the expectation of some good test functions on $\R\times \P^{d-1}$. We postpone these questions to a future work  in order to keep the current article less technical.  The results presented here can be extended to the case of matrices with entries in a local field,  see \cite{benoist-quint:book} for local field versions of the results stated in Section \ref{sec:prelim}.

\medskip

\noindent \textbf{Organization of the article.} The article is organized as follows.  In Section \ref{sec:prelim}, we recall some standard result from the theory of random matrix products that will be used in the proofs, most notably: spectral gap results, large deviation estimates and regularity properties of the stationary measure.  Theorem \ref{thm:BE-coeff} is proved in Section \ref{sec:BE} and Theorem \ref{thm:LLT-coeff} is proved in Section \ref{sec:LLT}.

\medskip

\noindent\textbf{Notations.} Throughout this article, the symbols $\lesssim$ and $\gtrsim$ stand for inequalities up to a multiplicative constant.  The dependence of these constants on certain parameters (or lack thereof),  if not explicitly stated,  will be clear from the context.  We denote by $\mathbf E$ the expectation and $\mathbf P$ the probability.

\section{Preliminary results} \label{sec:prelim}

We start with some basic results and notations. We refer to \cite{bougerol-lacroix,benoist-quint:book} for the proofs of the results described here. See also \cite{lepage:theoremes-limites}.

\subsection{Norm cocycle,  first Lyapunov exponent and the stationary measure}
Let  $G:=\GL_d(\R)$. We consider its standard linear action on $\R^d$ and the induced action on the real projective space $\P^{d-1}$. Let $\mu$ be a probability measure on $G$.   For $n \geq 1$,  we define the convolution measure by  $\mu^{*n} := \mu * \cdots * \mu$ ($n$ times) as the push-forward of the product measure $\mu^{\otimes n}$ on $G^n$ by the map $(g_1, \ldots, g_n) \mapsto g_n \cdots g_1$.  If $g_j$ are i.i.d. random matrices with law $\mu$ then $\mu^{*n} $ is the law of $S_n := g_n \cdots g_1$.

Denote by $\norm{g}$ the operator norm of the matrix $g$ and define $N(g):=\max\big( \norm{g},\norm{g^{-1}} \big)$. We say that  $\mu$  has a \textit{finite exponential moment} if  
$$\E \big( N(g)^\varepsilon \big) = \int_G N(g)^\varepsilon \, \diff \mu(g) < \infty \quad \text{for some } \,\, \varepsilon > 0.$$

The  \textit{first Lyapunov exponent} is the number
$$\gamma := \lim_{n \to \infty} \frac1n \E \big( \log\|S_n\| \big)=\lim_{n \to \infty} \frac1n \int \log\|g_n \cdots g_1\| \, \diff \mu(g_1) \cdots \diff \mu(g_n).$$

The \textit{norm cocycle} is the function $\sigma: G \times \P^{d-1} \to \R$ given by $$\sigma(g,x) = \sigma_g(x):= \log \frac{\norm{gv}}{\norm{v}}, \quad \text{for }\,\, v \in \R^d \setminus \{0\}, \, x = [v] \in \P^{d-1}   \, \text{ and } g \in G.$$

An element $g\in G$ is said to be \textit{proximal} if it admits a unique eigenvalue of maximal modulus which is moreover of multiplicity one. A semigroup $\Gamma$ is said to be \textit{proximal} if it contains a proximal element. We say that (the action of) $\Gamma$ is \textit{strongly irreducible} if it does not preserve a finite union of proper linear subspaces of $\R^d$.

 Denote by $\Gamma_\mu$ the semigroup generated by the support of $\mu$. If $\Gamma_\mu$ is proximal and strongly irreducible, then $\mu$ admits a unique \textit{stationary measure}, that is, a probability measure $\nu$ on $\P^{d-1}$ satisfying $$\int_G g_* \nu \, \diff \mu(g)= \nu.$$
 The above measure is also called the \textit{Furstenberg measure} associated with $\mu$. 

\subsection{Large deviation estimates and regularity}

We equip $\P^{d-1}$ with a natural distance given by 
\begin{equation*} 
d(x,w) : = \sqrt{1 - \bigg( \frac{\langle v_x,v_w \rangle}{\|v_x\| \|v_w\|} \bigg)^2}, \quad \text{where} \quad v_x,v_w \in \R^d \setminus \{0\}, \, x = [v_x], \,\, w = [v_w] \in \P^{d-1}.
\end{equation*}

Observe that $d(x,w)$ is the sine of the angle between the lines $x$ and $w$ in $\R^d$. Then,  $(\P^{d-1}, d)$ has diameter one on which the orthogonal group $\text{O}(d)$ acts transitively and isometrically. We will denote by $\B(x,r)$ the associated open ball of center $x$ and radius $r$ in $\P^{d-1}$.
\medskip

For $y\in (\P^{d-1})^*$,  the dual of $\P^{d-1}$, we  denote by $H_y$ the kernel of $y$, which is a (projective) hyperplane in $\P^{d-1}$. We'll need the following large deviation estimates.  Recall  that $\gamma$ denotes the first Lyapunov exponent of $\mu$.

\begin{proposition}[\cite{benoist-quint:book}--Proposition 14.3 and Lemma 14.11]  \label{prop:BQLDT}
	Let $\mu$ be a  probability measure on $G=\GL_d(\R)$. Assume that $\mu$ has a finite exponential moment and that $\Gamma_\mu$ is proximal and strongly irreducible. Then, for any $\ep>0$ there exist $c>0$ and $n_0 \in\N$ such that, for all $\ell\geq n\geq n_0$, $x\in \P^{d-1}$ and $y\in(\P^{d-1})^*$, one has 
		$$   \mu^{*n} \big\{g\in G:\, |\sigma(g,x)-n\gamma|  \geq n\ep \big\}\leq e^{-cn}          $$
		and
		$$   \mu^{*\ell} \big\{g\in G:\, d(gx, H_y) \leq e^{-\ep n}  \big\}  \leq e^{-cn}.         $$
\end{proposition}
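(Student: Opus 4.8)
The proposition is part of the standard theory of random matrix products, so in the paper it is simply quoted from \cite{benoist-quint:book}; here we indicate how we would establish it from the spectral gap and the regularity of the Furstenberg measure. The two inequalities are of different nature and we would treat them separately.

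For the cocycle estimate we would run Le Page's large deviation argument via analytic perturbation of the transfer operator. Using the finite exponential moment, for $z$ in a small complex disc $\{|z|\le\vep_0\}$ the operator $P_z f(x):=\int_G e^{z\sigma(g,x)}f(g\cdot x)\,\diff\mu(g)$ acts boundedly and holomorphically in $z$ on a H\"older space $\Cc^\alpha(\P^{d-1})$, and by the cocycle relation $(P_z^n\mathbf 1)(x)=\mathbf E\big(e^{z\sigma(S_n,x)}\big)$. Proximality and strong irreducibility give a spectral gap for $P_0$ which persists for $|z|\le\vep_0$, so $P_z=\lambda(z)\Pi_z+N_z$ with $\lambda$ holomorphic, $\lambda(0)=1$, $(\log\lambda)'(0)=\gamma$, $\Pi_z$ of rank one, and $\|N_z^n\|\lesssim\theta^n$ with $\theta<\inf_{|z|\le\vep_0}|\lambda(z)|$; hence $(P_t^n\mathbf 1)(x)\lesssim\lambda(t)^n$ uniformly in $x$ for real $|t|\le\vep_0$. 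A Chernoff bound then gives, for $0<t\le\vep_0$,
\[
\mu^{*n}\{\,\sigma(g,x)-n\gamma\ge n\ep\,\}\le e^{-tn(\gamma+\ep)}\,\mathbf E\big(e^{t\sigma(S_n,x)}\big)\lesssim e^{\,n(\log\lambda(t)-t(\gamma+\ep))},
\]
and since the exponent vanishes at $t=0$ with derivative $\gamma-(\gamma+\ep)=-\ep<0$, for $t$ small it is $\le-cn$; the lower tail is identical with $z=-t$. This yields the first inequality for $n\ge n_0$.

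For the hyperplane estimate we would combine two facts also recalled in this section: (i) the random walk contracts $\P^{d-1}$ exponentially, so on the probability space of the sequence $(g_j)_{j\ge1}$ there is a random point $Z$ of law $\nu$ with $\mathbf P\big(d(S_\ell x,Z)\ge e^{-c_1\ell}\big)\le e^{-c_1\ell}$, uniformly in $x$ and $\ell$; and (ii) the H\"older regularity of the stationary measure, $\sup_{y}\nu\big(\B(H_y,r)\big)\le Cr^\tau$ for some $\tau,C>0$. On the event $\{d(S_\ell x,Z)<e^{-c_1\ell}\}$ the triangle inequality gives $d(Z,H_y)\le d(S_\ell x,H_y)+e^{-c_1\ell}$, so if moreover $d(S_\ell x,H_y)\le e^{-\ep n}$ and $\ell\ge n$, then $d(Z,H_y)<2e^{-\ep' n}$ with $\ep':=\min(\ep,c_1)$. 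Therefore
\[
\mu^{*\ell}\{\,d(gx,H_y)\le e^{-\ep n}\,\}\le e^{-c_1\ell}+\nu\big(\B(H_y,2e^{-\ep' n})\big)\le e^{-c_1 n}+C\,2^{\tau}e^{-\tau\ep' n},
\]
which is $\le e^{-cn}$ for a suitable $c>0$ once $n\ge n_0$.

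The main obstacle, in both parts, lies entirely in the two ``black boxes'' invoked above: the quasi-compactness of $P_z$ with holomorphic leading eigenvalue $\lambda(z)$ (equivalently, the well-posedness and spectral gap of Le Page's operator on $\Cc^\alpha$), and the H\"older continuity of $\nu$ — both resting on proximality, strong irreducibility and the exponential moment. Granting these, the assembly via Chernoff bounds and a union bound is routine, and since these inputs are precisely the standard results recalled in Section~\ref{sec:prelim}, in the paper itself we would simply quote them as in \cite{benoist-quint:book,bougerol-lacroix,lepage:theoremes-limites}.
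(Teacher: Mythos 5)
The paper gives no proof of this proposition --- it is quoted directly from \cite{benoist-quint:book} --- and your sketch is a correct outline of the standard argument behind that citation: a Chernoff bound through the analytic perturbation $z\mapsto\oP_z$ and its leading eigenvalue for the cocycle estimate, and exponential contraction plus the H\"older regularity of $\nu$ (Proposition \ref{prop:regularity}) for the hyperplane estimate. One imprecision to fix in the second half: for the forward product $S_\ell=g_\ell\cdots g_1$ the points $S_\ell x$ do not converge, so there is no single random point $Z$ attached to the sequence $(S_\ell x)_\ell$; since the claim only concerns the law $\mu^{*\ell}$, you should instead couple with the reversed-order product $g_1\cdots g_\ell$, which has the same law and whose image of $x$ does converge exponentially fast to a point $Z$ of law $\nu$ --- and note that this exponential contraction estimate is not among the facts recalled in Section \ref{sec:prelim}, so it must be imported from \cite{bougerol-lacroix,benoist-quint:book,lepage:theoremes-limites}. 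With that adjustment, the assembly you describe (triangle inequality, union bound, regularity of $\nu$, uniformity in $x$, $y$ and $\ell\geq n$) is correct.
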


The next result gives a regularity property of the stationary measure $\nu$. See also \cite{benoist-quint:CLT,DKW:PAMQ} for the case where $\mu$ satisfies weaker moment conditions. For a hyperplane $H$ in $\P^{d-1}$ and $r>0$,  we denote $\B(H,r) :=\{x \in \P^{d-1}: d(x,H) < r\}$, which is a ``tubular'' neighborhood  of $H$.

\begin{proposition}[\cite{guivarch:1990}, \cite{benoist-quint:book}--Theorem 14.1]\label{prop:regularity}
	Let $\mu$ be a  probability measure on $G=\GL_d(\R)$. Assume that $\mu$ has a finite exponential moment and that $\Gamma_\mu$ is proximal and strongly irreducible. Let $\nu$ be the associated stationary measure. Then, there are constants $C>0$ and $\eta>0$ such that 
	$$\nu\big(\B(H_y,r)\big)\leq C r^\eta \quad\text{for every} \quad y\in (\P^{d-1})^* \, \, \text{ and } \,\, 0 \leq r \leq 1.$$
\end{proposition}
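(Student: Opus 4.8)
The plan is to read the $\nu$-measure of a tubular neighbourhood of $H_y$ as a probability over the random walk and then invoke the large deviation estimate of Proposition~\ref{prop:BQLDT}.

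First I would upgrade the stationarity identity to time $n$: iterating $\nu=\int_G g_*\nu\,\diff\mu(g)$ gives $\nu=\int_{G^n}(g_1g_2\cdots g_n)_*\nu\,\diff\mu^{\otimes n}(g_1,\dots,g_n)$, and since the $g_j$ are i.i.d.\ the map $(g_1,\dots,g_n)\mapsto g_1\cdots g_n$ pushes $\mu^{\otimes n}$ to the same measure $\mu^{*n}$ as $(g_1,\dots,g_n)\mapsto g_n\cdots g_1$, so $\nu=\int_G g_*\nu\,\diff\mu^{*n}(g)$ for every $n\ge1$. Testing this against $\ind_{\B(H_y,r)}$ and using Fubini, for all $y\in(\P^{d-1})^*$, $r>0$ and $n\ge1$,
$$\nu\big(\B(H_y,r)\big)=\int_{\P^{d-1}}\mu^{*n}\big\{g\in G:\ d(gx,H_y)<r\big\}\,\diff\nu(x).$$

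Next I would discretise in $r$. Fix a small $\ep>0$ and let $c>0$, $n_0\in\N$ be the constants given by the second estimate of Proposition~\ref{prop:BQLDT} (applied with $\ell=n$). For $r$ in the bounded range $[e^{-\ep n_0},1]$ the trivial bound $\nu(\B(H_y,r))\le1$ already gives $\nu(\B(H_y,r))\le C r^{\eta}$ once $C$ is large (for any $\eta>0$). For $0<r<e^{-\ep n_0}$, choose $n:=\lfloor -\tfrac1\ep\log r\rfloor\ge n_0$, so that $r\le e^{-\ep n}$; the displayed identity together with Proposition~\ref{prop:BQLDT} then yields
$$\nu\big(\B(H_y,r)\big)\le\int_{\P^{d-1}}\mu^{*n}\big\{g\in G:\ d(gx,H_y)\le e^{-\ep n}\big\}\,\diff\nu(x)\le e^{-cn}.$$
Taking $\eta:=c/\ep$ and using $e^{-\ep n}<e^{\ep}r$, one has $e^{-cn}=(e^{-\ep n})^{\eta}<e^{\ep\eta}r^{\eta}=e^{c}r^{\eta}$, so the asserted estimate holds with this $\eta$ and a constant $C>0$ depending only on $\ep$, $c$ and $n_0$. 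It is uniform in $y$ (and in the auxiliary point $x$) because the constants in Proposition~\ref{prop:BQLDT} are.

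I do not expect a genuine obstacle here: essentially all the analytic content has been packaged into Proposition~\ref{prop:BQLDT}, whose second estimate is precisely where proximality, strong irreducibility and the exponential moment enter, and whose proof rests on the spectral-gap/contraction properties of the walk (or on Le Page--type subadditive arguments). Were that estimate not at hand, I would instead follow Guivarc'h's original strategy and prove directly that $\sup_{y}\int_{\P^{d-1}}d(x,H_y)^{-\eta}\,\diff\nu(x)<\infty$ for some small $\eta>0$ — a genuine integrability statement obtained from the spectral analysis of a perturbed transfer operator on a H\"older space — and then conclude by Markov's inequality; this route is heavier, so with Proposition~\ref{prop:BQLDT} available the argument above is the natural one.
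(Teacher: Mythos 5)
Your argument is correct, and it is worth noting that the paper itself does not prove this proposition at all: it is quoted from the literature (Guivarc'h, and Theorem 14.1 in Benoist--Quint), so there is no internal proof to compare against. Your derivation is a clean reduction to the second estimate of Proposition \ref{prop:BQLDT}: the identity $\nu=\int_G g_*\nu\,\diff\mu^{*n}(g)$ is indeed valid for every $n$ (your remark about the two orderings of the product is fine, since reversing an i.i.d.\ tuple does not change the law of the product), the Fubini step and the choice $n=\lfloor-\tfrac1\ep\log r\rfloor$ with $\eta=c/\ep$ are all correct, the range $r\in[e^{-\ep n_0},1]$ and $r=0$ are handled trivially, and uniformity in $y$ comes from the uniformity in Proposition \ref{prop:BQLDT}. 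The only caveat is logical provenance rather than correctness: in the cited sources the order is essentially the reverse of yours --- the regularity of $\nu$ is obtained first (or simultaneously), via Guivarc'h-type uniform moment bounds such as $\sup_{n,x,y}\mathbf E\big(d(S_nx,H_y)^{-\eta}\big)<\infty$ coming from spectral/contraction arguments, and the large deviation estimate for $d(gx,H_y)$ rests on the same machinery --- so your route would be circular as a reconstruction of the original proof, but as a derivation inside this paper, where Proposition \ref{prop:BQLDT} is taken as given, it is complete and legitimate; your closing remark correctly identifies the Markov-inequality/moment strategy one would use if that estimate were not available.
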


\subsection{The Markov operator and its perturbations} \label{subsec:markov-op}

The \textit{Markov operator} associated to $\mu$ is the operator 
$$\oP \varphi(x):=\int_{G} \varphi(gx) \,\diff\mu(g),$$
acting on functions on $\P^{d-1}$.

For $z\in\C$, we consider the perturbation $\oP_z$ of $\oP$ given by
$$\oP_z \varphi(x):=\int_{G} e^{z\sigma(g,x)}\varphi(gx) \,\diff\mu(g),$$
where $\sigma(g,x)$ is the norm cocycle defined above. The operator $\oP_z$ is often called the \textit{complex transfer operator}. Notice that $\oP_0= \oP$ is the original Markov operator. A direct computation using the cocycle relation $\sigma(g_2g_1,x) = \sigma(g_2,g_1  x) + \sigma(g_1,x)$ gives that
\begin{equation} \label{eq:markov-op-iterate}
\oP^n_z \varphi (x)   = \int_G e^{z \sigma(g,x)} \varphi(gx) \, \diff \mu^{* n} (g).
\end{equation}
In other words, $\oP^n_z$ corresponds to the perturbed Markov operator associated with the convolution power $\mu^{\ast n}$.

We recall some fundamental results of Le Page about the spectral properties of the above operators. For $0<\alpha<1$, we denote by $\Cc^\alpha(\P^{d-1})$ the space of H\"older continuous functions on $\P^{d-1}$ equipped with the norm
\begin{equation*}
\|\varphi\|_{\Cc^\alpha} := \|\varphi\|_\infty + \sup_{x \neq y \in \P^{d-1}} \frac{|\varphi(x)-\varphi(y)|}{d(x,y)^\alpha}. 
\end{equation*}

Recall that the essential spectrum of an operator is the subset of the spectrum obtained by removing its isolated points corresponding to eigenvalues of finite multiplicity.
The essential spectral radius $\rho_{\rm ess}$ is then the radius of the smallest disc centered at the origin which contains the essential spectrum.

\begin{theorem}\label{thm:spectral-gap} \cite{lepage:theoremes-limites}, \cite[V.2]{bougerol-lacroix} Let $\mu$ be a  probability measure on $G=\GL_d(\R)$ with a finite exponential moment such that $\Gamma_\mu$ is proximal and strongly irreducible. Then, there exists an $0<\alpha_0 <1$ such that, for all $0<\alpha \leq \alpha_0$, the operator $\oP$ acts continuously on $\Cc^\alpha(\P^{d-1})$ with a spectral gap. In other words,  $\rho_{\rm ess}(\oP)<1$ and $\oP$ has a single eigenvalue of modulus $\geq 1$ located at $1$, which is isolated and of multiplicity one.
\end{theorem}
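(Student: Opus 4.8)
This is a classical theorem of Le Page, and the plan is to reproduce the standard quasi-compactness argument based on a Doeblin--Fortet (Lasota--Yorke) inequality. First I would record the Lipschitz bound for the projective action: writing $d(x,w)=\norm{v_x\wedge v_w}$ for unit representatives $v_x,v_w$, one has $\norm{\wedge^2 g}\le\norm{g}^2$ and $\norm{gv}\ge N(g)^{-1}\norm{v}$, hence $d(gx,gw)\le N(g)^{4}\,d(x,w)$ for every $g\in G$. Since $\oP 1=1$ gives $\norm{\oP\varphi}_\infty\le\norm{\varphi}_\infty$, integrating the Lipschitz bound yields
$$\sup_{x\neq w}\frac{|\oP\varphi(x)-\oP\varphi(w)|}{d(x,w)^\alpha}\le\Big(\int_G N(g)^{4\alpha}\,\diff\mu(g)\Big)\sup_{x\neq w}\frac{|\varphi(x)-\varphi(w)|}{d(x,w)^\alpha},$$
which is finite as soon as $4\alpha$ is below the exponential-moment exponent $\vep$; so $\oP$ is bounded on $\Cc^\alpha(\P^{d-1})$.

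The heart of the matter — and the step I expect to cause the most trouble — is a contraction-on-average estimate: for every $\alpha$ small enough there exist an integer $n_1$ and $\kappa\in(0,1)$ with
$$\sup_{x\neq w}\int_G\Big(\frac{d(gx,gw)}{d(x,w)}\Big)^\alpha\diff\mu^{*n_1}(g)\le\kappa.$$
To obtain this I would use that, under proximality and strong irreducibility, the top Lyapunov exponent is simple, so the second one $\gamma_2$ satisfies $\gamma_2<\gamma$. From the pointwise bound $\frac{d(gx,gw)}{d(x,w)}\le\frac{\norm{\wedge^2 g}}{\norm{g v_x}\,\norm{g v_w}}$, one splits $G$ (with $g$ distributed as $\mu^{*n}$) into the ``good'' event on which $\norm{gv_x},\norm{gv_w}\ge e^{n(\gamma-\ep)}$ and $\norm{\wedge^2 g}\le e^{n(\gamma+\gamma_2+\ep)}$, and its complement. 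On the good event the ratio is at most $e^{n(\gamma_2-\gamma+3\ep)}$, which tends to $0$ with $n$ once $\ep<(\gamma-\gamma_2)/3$; the complement has $\mu^{*n}$-measure $\le e^{-cn}$ by the large deviation estimates of Proposition \ref{prop:BQLDT} (applied to the cocycle, and to the analogue on $\wedge^2\R^d$ for $\norm{\wedge^2 g}$), and its contribution is controlled by H\"older's inequality together with the exponential moment of $N(S_n)^{8\alpha}$. Choosing $\alpha$ small (so that these exponential moments grow slowly in $n$) and then $n_1$ large gives the claim; keeping track of all the parameters uniformly is the delicate bookkeeping here.

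Granting the contraction estimate, the rest is routine. By \eqref{eq:markov-op-iterate} with $z=0$ it gives the Doeblin--Fortet inequality $\norm{\oP^{n_1}\varphi}_{\Cc^\alpha}\le\kappa\norm{\varphi}_{\Cc^\alpha}+\norm{\varphi}_\infty$, which on iteration (using $\norm{\oP^j\varphi}_\infty\le\norm{\varphi}_\infty$) also shows $\oP$ is power-bounded on $\Cc^\alpha$. Since the inclusion $\Cc^\alpha(\P^{d-1})\hookrightarrow\Cc^0(\P^{d-1})$ is compact by Arzel\`a--Ascoli, the Ionescu-Tulcea--Marinescu / Hennion theorem shows $\oP^{n_1}$ is quasi-compact with essential spectral radius $\le\kappa$, hence $\oP$ is quasi-compact on $\Cc^\alpha$ with $\rho_{\rm ess}(\oP)\le\kappa^{1/n_1}<1$; power-boundedness gives $\rho(\oP)=1$ (the lower bound from $\oP 1=1$), so the part of the spectrum of modulus $\ge 1$ consists of finitely many eigenvalues on the unit circle. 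Finally I would identify this peripheral spectrum using the classical convergence in law $S_nx\to\nu$: if $\oP\varphi=\lambda\varphi$ with $|\lambda|=1$ and $\varphi\neq 0$, then $\lambda^n\varphi(x)=\oP^n\varphi(x)=\E[\varphi(S_nx)]\to\int\varphi\,\diff\nu$ for every $x$, and taking $x$ with $\varphi(x)\neq 0$ forces $\lambda^n$ to converge, hence $\lambda=1$ and $\varphi\equiv\int\varphi\,\diff\nu$ is constant; power-boundedness of $\oP$ (applied to a hypothetical $\psi$ with $\oP\psi-\psi=1$, giving $\oP^n\psi=\psi+n$) rules out a Jordan block at $1$. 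Thus $1$ is the unique eigenvalue of modulus $\ge 1$, it is isolated and of multiplicity one, which together with $\rho_{\rm ess}(\oP)<1$ is the asserted spectral gap.
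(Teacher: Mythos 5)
The paper offers no proof of this theorem---it is quoted as a classical result of Le Page, proved in Bougerol--Lacroix, Chapter V.2---and your outline reproduces exactly that standard argument: average contraction of the projective distance coming from simplicity of the top Lyapunov exponent, a Doeblin--Fortet inequality, quasi-compactness via Ionescu-Tulcea--Marinescu/Hennion, and identification of the peripheral spectrum through convergence of the law of $S_n x$ to the unique stationary measure, so your approach matches the cited source. The only point to phrase carefully is the large-deviation bound for $\log\norm{\wedge^2 S_n}$: Proposition \ref{prop:BQLDT} does not apply verbatim to the exterior square representation (which need not be proximal or strongly irreducible), but you only need the upper-tail estimate above $\gamma+\gamma_2$, which follows from subadditivity and the finite exponential moment, so the argument goes through.
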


It follows directly from the above theorem that $\|\oP^n - \oN\|_{\Cc^\alpha} \leq C \lambda^n$ for some constants $C > 0$ and $0<\lambda<1$, where $\oN$ is the projection $\varphi \mapsto \big( \int_{\P^{d-1}} \varphi \,  \diff \nu \big) \cdot \mathbf 1$ onto the space of constant functions. Here and in what follows, we denote by  $\mathbf 1$ the constant function equal to $1$ on $\P^{d-1}$.

The following result gives the regularity of the family of operators $z \mapsto \oP_z$. The second part follows from the general theory of perturbations of linear operators, which implies that the spectral properties of $\oP_0$ persist for small values of $z$. For a proof,  see e.g.\ \cite[V.4]{bougerol-lacroix}.

\begin{proposition} \label{prop:spectral-decomp}
 Let $\mu$ and $\alpha_0$ be as in Theorem \ref{thm:spectral-gap}. There exists $b > 0$ such that for $|\Re z| < b$, the operators $\oP_z$ act continuously on $\Cc^\alpha(\P^{d-1})$ for all $0<\alpha \leq \alpha_0$. Moreover, the family of operators $z \mapsto \oP_z$ is analytic near $z=0$.

  In particular, there exists an $\epsilon_0 > 0$ such that, for $|z|\leq \epsilon_0$, one has a decomposition
	\begin{equation} \label{eq:P_t-decomp}
	\oP_z = \lambda_z \oN_z + \oQ_z,
	\end{equation}
	where $\lambda_z \in \C$, $\oN_z$ and $\oQ_z$ are bounded operators on $\Cc^{\alpha}(\P^{d-1})$ and 
	\begin{enumerate}
		\item $\lambda_0 = 1$ and $\oN_0 \varphi = \int_{\P^{d-1}} \varphi \, \diff \nu$, which is a constant function,  where $\nu$ is the unique $\mu$-stationary measure;
		\item $\rho:= \displaystyle \lim_{n \to \infty
		 } \|\oP_0^n - \oN_0\|_{\Cc^\alpha}^{1 \slash n} < 1$;
		
		\item $\lambda_z$ is the unique eigenvalue of maximum modulus of $\oP_z$, $\oN_z$ is a rank-one projection and $\oN_z \oQ_z = \oQ_z \oN_z = 0$;
		
		\item the maps $z \mapsto \lambda_z$,  $z \mapsto \oN_z$ and $z \mapsto \oQ_z$ are analytic;
		
		\item  $|\lambda_z| \geq \frac{2 + \rho}{3}$ and for every $k\in\N$, there exists a constant $c > 0$ such that $$\Big \| \frac{\diff^k \oQ_z^n}{\diff z^k} \Big \|_{\Cc^\alpha} \leq c \Big( \frac{1 + 2 \rho}{3} \Big)^n \quad \text{ for every}\quad n \geq 0;$$
		
		\item for $z=i\xi\in i\R$, we have
		 $$   \lambda_{i\xi} = 1 + i  \gamma \xi - \frac{\varrho^2+\gamma^2}{2}\xi^2+O(|\xi|^3) \quad \text {as } \,\, \xi \to 0,$$ where $\gamma$ is the first Lyapunov exponent of $\mu$ and $\varrho > 0$ is a constant.
	\end{enumerate}
\end{proposition}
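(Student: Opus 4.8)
The plan is to reduce everything to the analytic perturbation theory of operators with a spectral gap, once two analytic facts are established: that each $\oP_z$ is a bounded operator on $\Cc^\alpha(\P^{d-1})$ for $z$ near the imaginary axis, and that $z\mapsto\oP_z$ is operator-norm analytic near $z=0$. For the boundedness, the inputs I would use are the standard estimates $|\sigma(g,x)|\le\log N(g)$, $d(gx,gy)\le N(g)^{2}d(x,y)$ and $|\sigma(g,x)-\sigma(g,y)|\le C N(g)^{2}d(x,y)$, valid for all $g\in G$ and $x,y\in\P^{d-1}$ (see \cite{bougerol-lacroix,benoist-quint:book}). Interpolating the last one with the trivial bound $|\sigma(g,x)-\sigma(g,y)|\le 2\log N(g)$ gives $|\sigma(g,x)-\sigma(g,y)|\lesssim N(g)^{2\alpha}(1+\log N(g))\,d(x,y)^\alpha$, and then, using $|e^{z\sigma(g,x)}|\le N(g)^{|\Re z|}$ and $|e^a-e^b|\le|a-b|e^{\max(\Re a,\Re b)}$, one bounds $\|\oP_z\varphi\|_\infty$ and the $\alpha$-H\"older seminorm of $\oP_z\varphi$ by a constant times $\big(\int_G N(g)^{|\Re z|+2\alpha}(1+\log N(g))\,\diff\mu(g)\big)\|\varphi\|_{\Cc^\alpha}$; after shrinking $\alpha_0$ and picking $b>0$ with $b+2\alpha_0<\varepsilon$ ($\varepsilon$ being the exponential moment exponent), the finite exponential moment makes this integral finite, giving boundedness for $|\Re z|<b$ and $0<\alpha\le\alpha_0$. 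For analyticity I would note that for fixed $\varphi$ the integrand $e^{z\sigma(g,x)}\varphi(gx)$ is entire in $z$ and the above bounds are locally uniform in $z$, so $z\mapsto\oP_z\varphi$ is $\Cc^\alpha$-valued holomorphic (Morera plus Fubini), with derivatives $\tfrac{\diff^k}{\diff z^k}\oP_z\varphi(x)=\int_G \sigma(g,x)^k e^{z\sigma(g,x)}\varphi(gx)\,\diff\mu(g)$ again bounded on $\Cc^\alpha$ (the factor $(\log N(g))^k$ is absorbed by the exponential moment); hence $z\mapsto\oP_z$ is analytic near $z=0$.

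Next I would invoke Theorem \ref{thm:spectral-gap}: $\oP_0=\oP$ has a spectral gap on $\Cc^\alpha$, so $1$ is a simple isolated eigenvalue of $\oP_0$ and $\sigma(\oP_0)\setminus\{1\}$ is contained in a disc $D(0,\rho)$ with $\rho<1$; the constant $\rho$ of item (2) is exactly $\sup\{|\zeta|:\zeta\in\sigma(\oP_0)\setminus\{1\}\}$, which is $<1$ because outside $\overline{D(0,\rho_{\rm ess})}$ the spectrum is discrete and has no accumulation point on the unit circle, and this proves (2). From $\oP_0\mathbf 1=\mathbf 1$ and the stationarity relation $\langle\nu,\oP_0\varphi\rangle=\langle\nu,\varphi\rangle$ one identifies the eigenprojection at $1$ as $\oN_0\varphi=\big(\int_{\P^{d-1}}\varphi\,\diff\nu\big)\mathbf 1$, giving (1). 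Now analytic perturbation theory applies: fix $\delta>0$ and $r$ with $\rho<r<1$ so that $\overline{D(1,\delta)}$ is disjoint from $\overline{D(0,r)}$ and meets $\sigma(\oP_0)$ only at $1$; by upper semicontinuity of the spectrum there is $\epsilon_0>0$ such that for $|z|\le\epsilon_0$ one has $\sigma(\oP_z)\subset D(0,r)\cup D(1,\delta)$ with $\sigma(\oP_z)\cap D(1,\delta)$ a single simple eigenvalue $\lambda_z$. Setting $\oN_z:=\tfrac1{2\pi i}\oint_{|\zeta-1|=\delta}(\zeta\,\id-\oP_z)^{-1}\,\diff\zeta$ yields a rank-one projection with $\oP_z\oN_z=\oN_z\oP_z=\lambda_z\oN_z$, and $\oQ_z:=\oP_z-\lambda_z\oN_z=\oP_z(\id-\oN_z)$ gives \eqref{eq:P_t-decomp} together with $\oN_z\oQ_z=\oQ_z\oN_z=0$, which is (3). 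Analyticity of $z\mapsto(\zeta\,\id-\oP_z)^{-1}$ on the contour gives analyticity of $\oN_z$, of $\lambda_z=\langle\nu,\oP_z\oN_z\mathbf 1\rangle/\langle\nu,\oN_z\mathbf 1\rangle$ (the denominator is near $1$), and of $\oQ_z$, proving (4); and since $\lambda_0=1>(2+\rho)/3$, shrinking $\epsilon_0$ gives $|\lambda_z|\ge(2+\rho)/3$. Finally, taking $r=(1+2\rho)/3\in(\rho,1)$, the functional-calculus identity $\oQ_z^n=\tfrac1{2\pi i}\oint_{|\zeta|=r}\zeta^n(\zeta\,\id-\oP_z)^{-1}\,\diff\zeta$, differentiated $k$ times in $z$ via $\tfrac{\diff}{\diff z}(\zeta\,\id-\oP_z)^{-1}=(\zeta\,\id-\oP_z)^{-1}\oP_z'(\zeta\,\id-\oP_z)^{-1}$ and bounded using the uniform boundedness of the resolvent and its $z$-derivatives on $\{|\zeta|=r\}\times\{|z|\le\epsilon_0\}$, gives $\big\|\tfrac{\diff^k}{\diff z^k}\oQ_z^n\big\|_{\Cc^\alpha}\le c\,r^n$, i.e.\ (5).

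For the expansion (6), write $\lambda_z=1+\lambda_0'z+\tfrac12\lambda_0''z^2+O(|z|^3)$. I would get $\lambda_0'$ by applying $\oP_z$ to the leading eigenfunction $v_z:=\oN_z\mathbf 1$ (so $v_0=\mathbf 1$), which reads $\int_G e^{z\sigma(g,x)}v_z(gx)\,\diff\mu(g)=\lambda_z v_z(x)$, then pairing with $\nu$, using stationarity of $\nu$, and differentiating at $z=0$: the $v_0'$-terms cancel and one is left with $\lambda_0'=\int_G\int_{\P^{d-1}}\sigma(g,x)\,\diff\mu(g)\diff\nu(x)=\gamma$ by Furstenberg's formula for the Lyapunov exponent. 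For $\lambda_0''$ I would set $\Lambda(z):=\log\lambda_z$, analytic with $\Lambda(0)=0$ and $\Lambda'(0)=\gamma$; applying $\oP_z^n=\lambda_z^n\oN_z+\oQ_z^n$ to $\mathbf 1$ and using \eqref{eq:markov-op-iterate} gives $\mathbf E\big(e^{z\sigma(S_n,x)}\big)=\lambda_z^n v_z(x)\big(1+O((r/|\lambda_z|)^n)\big)$, whence the classical identification $\Lambda''(0)=\lim_{n}\tfrac1n\operatorname{Var}\big(\sigma(S_n,x)\big)=:\varrho^2\ge0$; under proximality and strong irreducibility this asymptotic variance is strictly positive, since the norm cocycle $\sigma$ is not cohomologous to a constant, see \cite{bougerol-lacroix,benoist-quint:book}. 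Then $\lambda_{i\xi}=e^{\Lambda(i\xi)}=\exp\big(i\gamma\xi-\tfrac{\varrho^2}{2}\xi^2+O(|\xi|^3)\big)=1+i\gamma\xi-\tfrac{\varrho^2+\gamma^2}{2}\xi^2+O(|\xi|^3)$, which is (6).

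The genuinely non-formal points are the $\Cc^\alpha$-boundedness of $\oP_z$, which is what forces both $\alpha$ and $|\Re z|$ to be small and which uses the finite exponential moment, and the strict positivity $\varrho>0$ — the only place, beyond Theorem \ref{thm:spectral-gap}, where the proximality and strong irreducibility hypotheses are really needed; I would quote the latter from the literature rather than reprove it. Everything else is a routine application of the Riesz--Dunford functional calculus and Kato's perturbation theory for operators with a spectral gap.
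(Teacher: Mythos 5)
Your proposal is correct, and it follows essentially the same route the paper takes: the paper does not prove this proposition itself but refers to the general perturbation theory of operators with a spectral gap and to Le Page/Bougerol--Lacroix [V.4], which is exactly the argument you spell out (H\"older boundedness and analyticity of $z\mapsto\oP_z$ via the exponential moment and cocycle estimates, Riesz--Dunford/Kato perturbation of the simple isolated eigenvalue at $1$, and the Nagaev--Guivarc'h identification of $\lambda_0'=\gamma$ and $\Lambda''(0)=\varrho^2>0$, the last positivity being quoted from the literature just as the paper does).
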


The constant  $\varrho^2 > 0$ appearing the above expansion of $\lambda_{i\xi}$ coincides with the variance in the Central Limit Theorem for the norm cocycle, see \cite{bougerol-lacroix,benoist-quint:book,DKW:LLT}. As a consequence of the above proposition, we can derive the following estimates which will be crucial in the proof of our main theorems.  For the proof, see \cite[Proposition 8.5]{DKW:LLT} and \cite[Lemma 9]{lepage:theoremes-limites}.
\begin{lemma}\label{lemma:lambda-estimates}
	Let $\ep_0$ be as in Proposition \ref{prop:spectral-decomp}. There exists $0 < \xi_0 < \ep_0$ such that, for all $n \in \N$ large enough, one has $$\big|\lambda_{{i\xi\over \sqrt n}}^n\big|\leq e^{-{\varrho^2\xi^2\over 3}} \quad\text{for}\quad  |\xi|\leq \xi_0\sqrt n,$$
$$\Big|  e^{-i\xi\sqrt n \gamma}\lambda_{{i\xi\over \sqrt n}}^n-e ^{-{\varrho^2\xi^2\over 2}}  \Big|\leq {c\over \sqrt n}|\xi|^3e^{-{\varrho^2\xi^2\over 2}} \quad\text{for}\quad |\xi|\leq \sqrt[6] n,$$  
$$\Big| e^{-i\xi\sqrt n \gamma} \lambda_{{i\xi\over \sqrt n}}^n-e ^{-{\varrho^2\xi^2\over 2}}  \Big|\leq {c\over \sqrt n}e^{-{\varrho^2\xi^2\over 4}} \quad\text{for}\quad \sqrt[6] n<|\xi|\leq \xi_0\sqrt n,$$  
where  $c>0$ is a constant independent of $n$.
\end{lemma}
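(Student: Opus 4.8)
The plan is to linearize the leading eigenvalue $\lambda_z$ near $z=0$ via its logarithm and then substitute $z = i\xi/\sqrt n$; all three bounds then reduce to elementary estimates on $\exp$. Since $z \mapsto \lambda_z$ is analytic near $0$ with $\lambda_0 = 1$ (Proposition \ref{prop:spectral-decomp}\,(1),(4)), there is $\ep_1 \in (0,\ep_0]$ such that $\lambda_{i\xi} \neq 0$ for $|\xi| \le \ep_1$ and $\psi(\xi):=\log\lambda_{i\xi}$ is a well-defined analytic function on $\{|\xi|\le\ep_1\}$ with $\psi(0)=0$. Substituting the expansion $\lambda_{i\xi} = 1 + i\gamma\xi - \tfrac{\varrho^2+\gamma^2}{2}\xi^2 + O(|\xi|^3)$ of Proposition \ref{prop:spectral-decomp}\,(6) into $\log(1+u) = u - \tfrac{u^2}{2} + O(u^3)$, the two $\gamma^2\xi^2$ contributions cancel and one obtains $\psi(\xi) = i\gamma\xi - \tfrac{\varrho^2}{2}\xi^2 + O(|\xi|^3)$; concretely, there is $C_0>0$ with $\big|\psi(\xi) - i\gamma\xi + \tfrac{\varrho^2}{2}\xi^2\big| \le C_0|\xi|^3$ for $|\xi|\le\ep_1$.

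I would next prove the first inequality, which is where $\xi_0$ gets calibrated. Set $\xi_0 := \min\{\ep_1,\ \varrho^2/(6C_0)\}$. For $|\xi| \le \xi_0\sqrt n$ one has $|\xi/\sqrt n| \le \ep_1$, so $\lambda_{i\xi/\sqrt n}^n = e^{n\psi(\xi/\sqrt n)}$ with
$$n\,\psi(\xi/\sqrt n) = i\gamma\sqrt n\,\xi - \tfrac{\varrho^2}{2}\xi^2 + E_n(\xi), \qquad |E_n(\xi)| \le \frac{C_0|\xi|^3}{\sqrt n} \le C_0\,\xi_0\,\xi^2 \le \frac{\varrho^2}{6}\xi^2.$$
Taking real parts gives $\Re\big(n\psi(\xi/\sqrt n)\big) \le -\tfrac{\varrho^2}{2}\xi^2 + \tfrac{\varrho^2}{6}\xi^2 = -\tfrac{\varrho^2}{3}\xi^2$, hence $\big|\lambda_{i\xi/\sqrt n}^n\big| = e^{\Re(n\psi(\xi/\sqrt n))} \le e^{-\varrho^2\xi^2/3}$, as required.

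For the other two bounds, note that the prefactor cancels the imaginary part: for $|\xi| \le \xi_0\sqrt n$,
$$e^{-i\xi\sqrt n\gamma}\lambda_{i\xi/\sqrt n}^n - e^{-\varrho^2\xi^2/2} = e^{-\varrho^2\xi^2/2}\big(e^{E_n(\xi)} - 1\big).$$
If $|\xi| \le \sqrt[6]{n}$ then $|E_n(\xi)| \le C_0|\xi|^3/\sqrt n \le C_0$, and using $|e^w - 1| \le |w|e^{|w|}$ the right-hand side is at most $C_0 e^{C_0}\,|\xi|^3 n^{-1/2} e^{-\varrho^2\xi^2/2}$, which is the second inequality. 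If $\sqrt[6]{n} < |\xi| \le \xi_0\sqrt n$ I would instead invoke the first inequality crudely: $\big|e^{-i\xi\sqrt n\gamma}\lambda_{i\xi/\sqrt n}^n - e^{-\varrho^2\xi^2/2}\big| \le \big|\lambda_{i\xi/\sqrt n}^n\big| + e^{-\varrho^2\xi^2/2} \le 2e^{-\varrho^2\xi^2/3}$; since $\xi^2 > n^{1/3}$ in this range, $2e^{-\varrho^2\xi^2/3} = 2e^{-\varrho^2\xi^2/4}\,e^{-\varrho^2\xi^2/12} \le 2e^{-\varrho^2\xi^2/4}\,e^{-\varrho^2 n^{1/3}/12}$, and $2\sqrt n\,e^{-\varrho^2 n^{1/3}/12}$ is bounded in $n$, so this is $\le c\,n^{-1/2}e^{-\varrho^2\xi^2/4}$ for $n$ large. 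Taking $c$ to be the largest of the constants produced completes the proof.

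The computation is essentially bookkeeping; the one delicate point is the calibration of $\xi_0$, which must be small enough that the cubic remainder $E_n(\xi)$ stays dominated by a fixed small fraction of $\varrho^2\xi^2$ uniformly over the whole range $|\xi|\le\xi_0\sqrt n$. This is what lets the first estimate lose only $e^{-\varrho^2\xi^2/3}$, and that surplus over the target $e^{-\varrho^2\xi^2/4}$ is exactly what, in the regime $|\xi| > \sqrt[6]{n}$, supplies a factor decaying faster than any power of $n$ and hence absorbs the $n^{-1/2}$ in the third estimate.
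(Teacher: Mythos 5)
Your proof is correct, and it is essentially the standard argument: the paper itself does not prove this lemma but defers to \cite[Proposition 8.5]{DKW:LLT} and \cite[Lemma 9]{lepage:theoremes-limites}, where the same route is taken, namely writing $\lambda_{i\xi}=e^{\psi(\xi)}$ with $\psi(\xi)=i\gamma\xi-\tfrac{\varrho^2}{2}\xi^2+O(|\xi|^3)$ via Proposition \ref{prop:spectral-decomp} and then treating the ranges $|\xi|\leq \sqrt[6]{n}$ and $\sqrt[6]{n}<|\xi|\leq\xi_0\sqrt n$ separately. The only point worth making explicit is that the second regime's inclusion $\sqrt[6]{n}\leq\xi_0\sqrt n$ (so that the Taylor bound on $\psi(\xi/\sqrt n)$ applies) requires $n$ large, which is exactly the ``for all $n$ large enough'' in the statement, so there is no gap.
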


The following important result describes the spectrum of $\oP_{i\xi}$ for large real values of $\xi$. It is one of the main tools in the proof of the Local Limit Theorem for the norm cocycle and it will also be indispensable in our proof of Theorem \ref{thm:LLT-coeff}.

\begin{proposition}\cite{lepage:theoremes-limites}, \cite[Chapter 15]{benoist-quint:book} \label{prop:spec-Pxi}
Let $\mu$ and $\alpha_0$ be as in Theorem \ref{thm:spectral-gap}. Let $K$ be a compact subset of $\R \setminus \{0\}$. Then, for every $0<\alpha \leq \alpha_0$ there exist constants $C_K>0$ and $0<\rho_K<1$ such that $\norm{\oP^n_{i\xi}}_{\Cc^\alpha}\leq C_K \rho_K^n$ for all $n\geq 1$ and $\xi\in K$.
\end{proposition}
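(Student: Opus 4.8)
The plan is to first establish the pointwise statement --- that for each fixed $\xi\in\R\setminus\{0\}$ the spectral radius $r(\oP_{i\xi})$ of $\oP_{i\xi}$ acting on $\Cc^\alpha(\P^{d-1})$ satisfies $r(\oP_{i\xi})<1$ --- and then to upgrade it to the uniform estimate on a compact $K\subset\R\setminus\{0\}$ by a soft compactness argument. For the latter, $r(\oP_{i\xi^\ast})<1$ provides an $m=m(\xi^\ast)$ with $\norm{\oP_{i\xi^\ast}^m}_{\Cc^\alpha}<1$; since an elementary estimate using the finite exponential moment shows that $\xi\mapsto\oP_{i\xi}$ is continuous in the operator norm of $\Cc^\alpha(\P^{d-1})$ (compare Proposition \ref{prop:spectral-decomp}), the inequality $\norm{\oP_{i\xi}^m}_{\Cc^\alpha}\le\theta_{\xi^\ast}<1$ persists on a neighbourhood of $\xi^\ast$. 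Covering $K$ by finitely many such neighbourhoods, and on each of them writing an arbitrary power $\oP_{i\xi}^n$ as a power of $\oP_{i\xi}^m$ times at most $m-1$ further iterates of $\oP_{i\xi}$ (whose norms are bounded uniformly on $K$ by continuity and compactness), one obtains constants $C_K$ and $0<\rho_K<1$ with $\norm{\oP_{i\xi}^n}_{\Cc^\alpha}\le C_K\rho_K^n$ for all $\xi\in K$, $n\ge 1$.

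For the pointwise step, first observe that $\norm{\oP_{i\xi}^n\varphi}_\infty\le\norm{\varphi}_\infty$, because $|e^{i\xi\sigma}|=1$ and $\oP_{i\xi}^n$ is the perturbed operator associated with $\mu^{*n}$, see \eqref{eq:markov-op-iterate}; hence $r(\oP_{i\xi})\le 1$. Next, splitting the H\"older increments $\oP_{i\xi}^n\varphi(x)-\oP_{i\xi}^n\varphi(x')$ into a contribution from the increments of $\varphi$ --- identical to the one for $\oP_0$ and controlled, via the Le Page contraction underlying Theorem \ref{thm:spectral-gap}, by $C\rho_0^n\norm{\varphi}_{\Cc^\alpha}$ with $0<\rho_0<1$ --- and a contribution from the H\"older increments of $x\mapsto e^{i\xi\sigma(g,x)}$, which is $\le|\xi|$ times an integrable function of $g$ and therefore bounded by $C_{n,\xi}\norm{\varphi}_\infty$, one obtains a Doeblin--Fortet (Lasota--Yorke) inequality $\norm{\oP_{i\xi}^n\varphi}_{\Cc^\alpha}\le C\rho_0^n\norm{\varphi}_{\Cc^\alpha}+C_{n,\xi}\norm{\varphi}_\infty$. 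By the theorem of Ionescu-Tulcea and Marinescu, $\rho_{\rm ess}(\oP_{i\xi})\le\rho_0<1$. Consequently, if $r(\oP_{i\xi})=1$, there exist $\lambda\in\C$ with $|\lambda|=1$ and $0\ne\varphi\in\Cc^\alpha(\P^{d-1})$ with $\oP_{i\xi}\varphi=\lambda\varphi$.

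It remains to exclude this. Taking moduli in $\lambda\varphi=\oP_{i\xi}\varphi$ gives $|\varphi|\le\oP_0|\varphi|$, hence $|\varphi|\le\oP_0^n|\varphi|\to\int|\varphi|\,\diff\nu$ by the spectral gap of $\oP_0$ (Theorem \ref{thm:spectral-gap}), so $|\varphi|\le\int|\varphi|\,\diff\nu$ everywhere, which forces $|\varphi|$ to equal the constant $\int|\varphi|\,\diff\nu$ on $\supp\nu$; this constant is nonzero, since otherwise $\mathbf E\big[|\varphi(S_nx)|\big]\to 0$ would give $\varphi\equiv 0$. Normalizing $|\varphi|\equiv 1$ on $\supp\nu$, the equality case of the triangle inequality applied to $\lambda\varphi(x)=\int_G e^{i\xi\sigma(g,x)}\varphi(gx)\,\diff\mu(g)$ gives, for each $x\in\supp\nu$, the identity $e^{i\xi\sigma(g,x)}\varphi(gx)=\lambda\varphi(x)$ for $\mu$-a.e.\ $g$, hence for all $g\in\supp\mu$ by continuity, and then, via the cocycle relation, $e^{i\xi\sigma(g,x)}\varphi(gx)=\lambda^n\varphi(x)$ for every $g\in\supp\mu^{*n}$, $x\in\supp\nu$, $n\ge 1$. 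Here proximality and strong irreducibility of $\Gamma_\mu$ enter: for a proximal $g\in\supp\mu^{*n}$ (replaced by $g^2$ if necessary so that its dominant eigenvalue $\lambda_1(g)$ is positive), with attracting fixed point $x^+\in\supp\nu$ and $\sigma(g,x^+)=\log\lambda_1(g)$, evaluation at $x=x^+$ yields $e^{i\xi\log\lambda_1(g)}=\lambda^n$. Combining this for two proximal elements $g\in\supp\mu^{*n}$, $g'\in\supp\mu^{*n'}$ gives $\xi\big(n'\log\lambda_1(g)-n\log\lambda_1(g')\big)\in 2\pi\Z$, and the non-arithmeticity of the norm cocycle under the proximal and strongly irreducible assumptions rules out this family of constraints when $\xi\ne 0$. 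Hence $r(\oP_{i\xi})<1$, which completes the pointwise step.

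The main obstacle is exactly this last point: the \emph{non-arithmeticity} of the norm cocycle, i.e.\ the non-solvability of the cohomological equation $e^{i\xi\sigma(g,x)}\varphi(gx)=\lambda\varphi(x)$ for $\xi\ne 0$. This is where the proximality and strong irreducibility of $\Gamma_\mu$ are genuinely used --- through the abundance and ``independence'' of the proximal elements of $\Gamma_\mu$ and of their dominant eigenvalues --- and it is the technical core of Le Page's theorem; the precise statements are contained in \cite{lepage:theoremes-limites} and \cite[Chapter 15]{benoist-quint:book}. By contrast, the Lasota--Yorke estimate, the peripheral-spectrum argument, and the compactness bootstrap are routine given the spectral gap of $\oP_0$ and the finite exponential moment.
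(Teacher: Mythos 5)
The paper offers no proof of this proposition at all — it is quoted from \cite{lepage:theoremes-limites} and \cite[Chapter 15]{benoist-quint:book} — and your sketch is a faithful outline of precisely the argument in those references: a Doeblin--Fortet inequality plus Hennion/Ionescu-Tulcea--Marinescu to get $\rho_{\rm ess}(\oP_{i\xi})<1$, exclusion of unimodular eigenvalues via the equality case of the triangle inequality on $\supp\nu$ and evaluation at attracting fixed points of proximal elements, and a norm-continuity/compactness bootstrap giving the uniformity in $\xi\in K$. Two small remarks: the claim $r(\oP_{i\xi})\le 1$ does not follow from the sup-norm contraction alone (the $\Cc^\alpha$ seminorm could a priori grow exponentially) but is recovered once quasi-compactness forces any spectral value of modulus $>\rho_0$ to be an eigenvalue with eigenfunction in $\Cc^\alpha$, and the decisive non-arithmeticity of the dominant eigenvalues of proximal elements of $\Gamma_\mu$ is, as you acknowledge, exactly the technical core of the cited works, so deferring it there is consistent with how the paper itself treats the whole statement.
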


\subsection{Fourier transform and characteristic function}

Recall that the Fourier transform of an integrable function $h$ on $\R$, denoted by $\widehat h$, is defined by
$$\widehat h(\xi):=\int_{-\infty}^{+\infty}h(u)e^{-i u\xi} \diff u$$ 
and the inverse Fourier transform  is $$ \oF^{-1} h(u) :={1\over {2\pi}}\int_{-\infty}^{+\infty} h(\xi) e^{ i u\xi} \diff \xi,$$ so that, when $\widehat h$ is integrable, one has $h = \oF^{-1} \widehat h$. With these definitions, the Fourier transform of $\widehat h(\xi)$ is $2\pi h(-u)$ and the convolution operator satisfies $\widehat{h_1*h_2}=\widehat h_1\cdot \widehat h_2$.

\begin{lemma}[\cite{DKW:LLT}--Lemma 2.2] \label{l:vartheta}
	There exists a smooth strictly positive even function $\vartheta$ on $\R$ with $\int_\R \vartheta(u) \diff u=1$ such that its Fourier transform $\widehat\vartheta$ is  a smooth even function supported by $[-1,1]$.
	Moreover, for $0<\delta \leq 1$ and $\vartheta_\delta(u):=\delta^{-2}\vartheta(u/\delta^2)$, the function $\widehat{\vartheta_\delta}$ is supported by $[-\delta^{-2},\delta^{-2}]$, $|\widehat{\vartheta_\delta}|\leq 1$ and $\norm{\widehat{\vartheta_\delta}}_{\Cc^1}\leq c$   for some constant $c>0$ independent of $\delta$.
\end{lemma}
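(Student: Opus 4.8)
The plan is to produce $\vartheta$ as the inverse Fourier transform of an explicit smooth, even, compactly supported function obtained by a ``square and self-convolve'' device that forces \emph{strict} positivity; the scaling claims for $\vartheta_\delta$ then drop out of elementary changes of variables and the Fourier identities recalled just above the lemma.

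I would start by fixing a real, even function $\kappa\in C^\infty_c(\R)$ with $\supp\kappa\subseteq[-1/2,1/2]$ and $\kappa\not\equiv 0$, and setting $g:=\oF^{-1}\kappa$ and $\vartheta_0:=2\pi\,g^2$. Since $\kappa$ is real and even, $g$ is real and even, so $\vartheta_0\geq 0$; a direct computation with the stated conventions gives $\widehat{\vartheta_0}=\kappa*\kappa$, which is smooth, even, supported in $[-1,1]$, with $\widehat{\vartheta_0}(0)=\int_\R\kappa^2>0$, hence $\int_\R\vartheta_0=\widehat{\vartheta_0}(0)>0$. The crucial point is that $g$, being the Fourier transform of a compactly supported smooth function, extends to an entire function by Paley--Wiener, so it is real-analytic and not identically zero on $\R$; therefore the zero set of $g$ (equivalently of $\vartheta_0$) is discrete, and $\{\vartheta_0>0\}$ has full Lebesgue measure. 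I then define
$$\vartheta:=\Big(\int_\R\vartheta_0\Big)^{-2}\,\vartheta_0*\vartheta_0.$$
For each $u\in\R$ the function $t\mapsto\vartheta_0(t)\,\vartheta_0(u-t)$ is nonnegative, integrable (product of two Schwartz functions), and strictly positive off a set of measure zero, so $(\vartheta_0*\vartheta_0)(u)>0$; thus $\vartheta$ is strictly positive on all of $\R$. It is even (convolution of even functions), smooth (a Schwartz function, being $\oF^{-1}$ of the compactly supported smooth function $\widehat\vartheta=(\int\vartheta_0)^{-2}(\kappa*\kappa)^2$), and $\int_\R\vartheta=1$ by the choice of the normalizing constant. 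Its Fourier transform $\widehat\vartheta=(\int\vartheta_0)^{-2}(\kappa*\kappa)^2$ is visibly smooth, even, and supported in $[-1,1]$, as required.

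For the scaling assertions, with $\vartheta_\delta(u)=\delta^{-2}\vartheta(u/\delta^2)$ a change of variables gives $\int_\R\vartheta_\delta=\int_\R\vartheta=1$ and $\widehat{\vartheta_\delta}(\xi)=\widehat\vartheta(\delta^2\xi)$, so $\widehat{\vartheta_\delta}$ is supported in $[-\delta^{-2},\delta^{-2}]$. Since $\vartheta\geq 0$ and $\int_\R\vartheta=1$ we have $\|\widehat\vartheta\|_\infty\leq 1$, whence $|\widehat{\vartheta_\delta}|\leq 1$; and $(\widehat{\vartheta_\delta})'(\xi)=\delta^2(\widehat\vartheta)'(\delta^2\xi)$, so for $0<\delta\leq 1$ one gets $\|\widehat{\vartheta_\delta}\|_{\Cc^1}\leq\|\widehat\vartheta\|_\infty+\delta^2\|(\widehat\vartheta)'\|_\infty\leq 1+\|(\widehat\vartheta)'\|_\infty=:c$, a constant independent of $\delta$.

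The only genuine obstacle is producing a strictly positive $\vartheta$: a nontrivial band-limited function cannot be simultaneously real and nowhere vanishing, so $\vartheta$ cannot simply be taken to be the square of a band-limited function. The two-step device above is exactly what resolves this — the first square makes $\vartheta_0$ nonnegative and real-analytic, hence with only isolated zeros, and the extra self-convolution upgrades ``$\geq 0$'' to ``$>0$'' everywhere while keeping $\widehat\vartheta$ smooth and compactly supported. Everything else is routine: injectivity of the Fourier transform, the convolution and dilation identities, and the trivial bound $|\widehat\vartheta|\leq\int\vartheta$.
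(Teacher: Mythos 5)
Your construction is correct, and it is worth noting that the paper itself does not prove this statement at all: it simply quotes Lemma 2.2 of \cite{DKW:LLT}, so your argument supplies a self-contained proof where the paper relies on a citation. All the key steps check out under the paper's Fourier conventions: $\widehat{\vartheta_0}=\kappa*\kappa$ for $\vartheta_0=2\pi(\oF^{-1}\kappa)^2$, the zero set of the real-analytic function $\oF^{-1}\kappa$ is discrete so the self-convolution $\vartheta_0*\vartheta_0$ is strictly positive everywhere, the normalization gives $\int\vartheta=\widehat\vartheta(0)=1$, and the dilation identities $\widehat{\vartheta_\delta}(\xi)=\widehat\vartheta(\delta^2\xi)$, $|\widehat\vartheta|\leq\int\vartheta=1$, $\|(\widehat{\vartheta_\delta})'\|_\infty\leq\delta^2\|(\widehat\vartheta)'\|_\infty$ give the support, boundedness and uniform $\Cc^1$ claims. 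One caveat: the heuristic remark in your last paragraph, that ``a nontrivial band-limited function cannot be simultaneously real and nowhere vanishing,'' is false, and in fact it is contradicted by the very function you construct --- $\vartheta$ is real, strictly positive, and has compactly supported Fourier transform. The correct (and harmless) point you seem to be after is that a specific choice such as $\oF^{-1}\kappa$ or its square need not be zero-free, which is exactly what the extra self-convolution repairs; since this aside is not used anywhere in the verification, it does not affect the validity of the proof.
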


As a consequence, we have the following approximation lemma. 

\begin{lemma}[\cite{DKW:LLT}--Lemma 2.4] \label{lemma:conv-fourier-approx}
	Let $\psi$ be a continuous real-valued function with support in a compact set $K$ in $\R$.  Assume that $\|\psi\|_\infty \leq 1$. Then, for every  $0< \delta \leq 1$ there exist a smooth functions $\psi^\pm_\delta$  such that $\widehat {\psi^\pm_\delta}$ have support in $[-\delta^{-2},\delta^{-2}]$,  $$\psi^-_\delta \leq\psi\leq \psi^+_\delta,\quad \lim_{\delta \to 0} \psi^\pm_\delta =\psi    \quad \text{and} \quad  \lim_{\delta \to 0} \big \|\psi^\pm_\delta -\psi \big \|_{L^1} = 0.$$ 
	Moreover,  $\norm{\psi_\delta^\pm}_\infty$, $\norm{\psi_\delta^\pm}_{L^1}$ and $\|\widehat{\psi^\pm_\delta}\|_{\Cc^1}$ are bounded by a constant which only depends on $K$.
\end{lemma}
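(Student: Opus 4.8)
The plan is to take $\psi_\delta^\pm$ to be $\psi$ mollified by the kernel $\vartheta_\delta$ of Lemma~\ref{l:vartheta}, plus a small correction that restores the one-sided inequalities. The point is that convolution with $\vartheta_\delta$ band-limits for free: for $h\in L^1(\R)$ the function $h*\vartheta_\delta$ is smooth, $\widehat{h*\vartheta_\delta}=\widehat h\,\widehat{\vartheta_\delta}$ is supported in $[-\delta^{-2},\delta^{-2}]$, $\|h*\vartheta_\delta\|_{L^p}\le\|h\|_{L^p}$, and — using that $\|\cdot\|_{\Cc^1}$ is sub-multiplicative up to a constant, that $\|\widehat h\|_{\Cc^1}\lesssim\|(1+|x|)h\|_{L^1}$, and Lemma~\ref{l:vartheta} — $\|\widehat{h*\vartheta_\delta}\|_{\Cc^1}\lesssim\|(1+|x|)h\|_{L^1}$. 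So the whole problem is to choose a good correction.

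First I would replace $K$ by $[\min K,\max K]=:[a,b]$, so all constants depend only on $a,b$. Since $\vartheta_\delta(u)=\delta^{-2}\vartheta(u/\delta^2)$ is a strictly positive probability density concentrated at scale $\delta^2$ whose tails $\int_{|u|\ge 1/\delta}\vartheta$, etc., are super-polynomially small, splitting the defining integral at $|z|=\delta$ and using uniform continuity of $\psi$ gives the uniform bound $\|\psi*\vartheta_\delta-\psi\|_\infty\le\alpha(\delta):=\omega_\psi(\delta)+2\!\int_{|u|\ge 1/\delta}\!\vartheta\to0$, where $\omega_\psi$ is the modulus of continuity of $\psi$; moreover $\|\psi*\vartheta_\delta-\psi\|_{L^1}\to0$ and $\psi*\vartheta_\delta\to\psi$ pointwise. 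I would then set, with a fixed larger interval $K'\supset K$ and a shrinking neighbourhood $E_\delta=\{\dist(\cdot,\{a,b\})<c\delta\}$ of $\partial K$,
$$\psi_\delta^+:=\bigl(\psi+2\alpha(\delta)\mathbf 1_{K'}+4\,\mathbf 1_{E_\delta}+\tau_\delta\bigr)*\vartheta_\delta,\qquad \psi_\delta^-:=-\bigl((-\psi)_\delta^+\bigr),$$
where $\tau_\delta\ge0$ is supported outside $K$, is essentially $2(\psi*\vartheta_\delta)_-$ there, and is arranged so that $\|\tau_\delta\|_{L^1}\le C\|\psi*\vartheta_\delta-\psi\|_{L^1}\to0$. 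All three correction terms then have $L^1$-norm tending to $0$ and first absolute moment bounded in terms of $a,b$, so the claimed bounds on $\|\psi_\delta^\pm\|_\infty$, $\|\psi_\delta^\pm\|_{L^1}$, $\|\widehat{\psi_\delta^\pm}\|_{\Cc^1}$, the support condition on $\widehat{\psi_\delta^\pm}$, and the convergences $\psi_\delta^\pm\to\psi$ (in $L^1$, and pointwise off $\partial K$) all follow immediately from the remarks above.

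The inequality $\psi\le\psi_\delta^+$ is the heart of the matter, and I would verify it by cases, using $\vartheta_\delta\ge0$ so that $\le$ is preserved by convolution: on $K$, $\psi*\vartheta_\delta\ge\psi-\alpha(\delta)$ and $\mathbf 1_{K'}*\vartheta_\delta\ge\tfrac12$, so the $2\alpha(\delta)\mathbf 1_{K'}$-term suffices; in the order-one layer $\{x\notin K:\dist(x,K)<c\delta\}$, $\psi*\vartheta_\delta\ge-1$ while $\mathbf 1_{E_\delta}*\vartheta_\delta\ge\tfrac12$, so the $4\,\mathbf 1_{E_\delta}$-term dominates; on $K'\setminus K$ outside that layer, $|\psi*\vartheta_\delta|\le\int_{|u|\ge c/\delta}\vartheta\le\alpha(\delta)$ is again absorbed by the first term; and in the far region $\R\setminus K'$, where $\psi*\vartheta_\delta$ is super-polynomially small but may be negative, one uses $\psi*\vartheta_\delta\ge-(\psi*\vartheta_\delta)_-$ together with $\tau_\delta$. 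The lower estimate is then obtained by symmetry from the definition of $\psi_\delta^-$.

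I expect this last step — controlling the far region — to be the main obstacle: one must dominate $\psi$ \emph{everywhere} and yet keep the correction negligible in $L^1$. The tension is sharpest (i) near $\partial K$, where the mollification can fall order one below $\psi$, which forces an order-one correction and is why $E_\delta$ is taken of width $O(\delta)$ (so $|E_\delta|\to0$); and (ii) in the tail, where the natural correction $2(\psi*\vartheta_\delta)_-$ is merely continuous, so band-limiting it reintroduces a deficit — but, far out, this deficit is only a small fraction of $|\psi*\vartheta_\delta|$ itself, so it can be mopped up by a geometric iteration, at the cost of some bookkeeping and the rapid decay of $\vartheta$. I would note that when $\psi\ge0$ — the case needed in the applications — both difficulties disappear: $\psi*\vartheta_\delta\ge0$ everywhere, and $\psi_\delta^+:=(\psi+2\alpha(\delta)\mathbf 1_{K'})*\vartheta_\delta$ already works.
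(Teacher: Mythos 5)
First, note that this paper does not prove the statement at all: it is quoted verbatim from \cite{DKW:LLT}, Lemma 2.4, so there is no in-paper argument to compare yours with; I can only assess your construction on its own terms.

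Your outline uses the right tool (mollification by the band-limited, strictly positive kernel $\vartheta_\delta$ of Lemma \ref{l:vartheta}), and the case analysis on $K$, in the $O(\delta)$-layer around $\partial K$, and on $K'\setminus K$ is fine up to constants. The genuine gap is exactly the step you flag and then defer: the far region. Your correction $\tau_\delta\approx 2(\psi*\vartheta_\delta)_-$ enters $\psi^+_\delta$ only after convolution with $\vartheta_\delta$, and pointwise domination $(\tau_\delta*\vartheta_\delta)(x)\geq (\psi*\vartheta_\delta)_-(x)$ does not follow from positivity of $\vartheta_\delta$: in the tail, $F:=\psi*\vartheta_\delta$ may oscillate in sign, so $F_-$ can vanish arbitrarily close to points where it is positive, and the naive bound $(2F_-)*\vartheta_\delta(x)\gtrsim \inf_{|u|\leq r}F_-(x-u)$ gives nothing. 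The proposed ``geometric iteration'' would need the deficit to contract pointwise at each step, and that requires quantitative regularity of the decay of $\vartheta$ (a doubling-type tail estimate, or a pointwise lower bound such as $\vartheta(u)\gtrsim (1+u^2)^{-1}$); Lemma \ref{l:vartheta} only gives strict positivity and fast decay of the tail integrals, so the comparison between $\vartheta$ (or its self-convolutions) at a point and its tail mass at larger scales is not available, and the iteration is not justified. In addition, your closing remark that for $\psi\geq 0$ ``both difficulties disappear'' only covers the majorant: the lemma also demands a band-limited minorant $\psi^-_\delta\leq\psi$, which via your own definition $\psi^-_\delta=-((-\psi)^+_\delta)$ throws you back into the signed case (you must make a band-limited function $\leq 0$ outside $K$ while staying $L^1$-close to $\psi$), and it is precisely $\widetilde\psi^-_\delta$ that is used in Subsection \ref{subsec:LLT-lower-limit}. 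A standard way to close the gap without any decay regularity of $\vartheta$ is to majorize/minorize through indicator functions of intervals (approximate $\psi$ by step functions from above and below and use explicit band-limited majorants and minorants of $\mathbf 1_{[a,b]}$ of Beurling--Selberg type, whose $L^1$-errors are $O(\delta^2)$ per interval); as written, your construction does not yet yield the one-sided inequalities claimed.
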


When proving limit theorems for random variables we often resort to the associated characteristic functions. For notational convenience, we will also use their conjugates.

\begin{definition} \label{def:conjugate-cf} \rm 
	For a real random variable $X$ with cumulative distribution function $F$ (c.d.f.\ for short) , we define its \textit{conjugate characteristic function} by $$\phi_F(\xi):=\mathbf E\big(e^{-i\xi X}\big).$$
\end{definition}

Observe that $\diff F$ is a probability measure on $\R$  and $\phi_F$ is its Fourier transform. In particular, when $F$ is differentiable and  $\phi_F$ is integrable, the following inversion formula holds 
\begin{equation}\label{inverse-char}
F'(u)={1\over 2\pi}\int_{-\infty}^\infty  e^{iu\xi} \phi_F(\xi) \,\diff \xi.
\end{equation}

\section{Berry-Esseen bound for coefficients}\label{sec:BE}

This section is devoted to the proof of Theorem \ref{thm:BE-coeff}.  We begin with the following version of Berry-Esseen lemma. See also \cite[XVI.3]{feller:book}.

\begin{lemma}  \label{lemma:BE-feller}
	Let $F$ be a c.d.f.\ of some real random variable and let  $H$  be a differentiable real-valued function  on $\R$ with derivative $h$ such that $H(-\infty)=0,H(\infty)=1,|h(u)|\leq m$ for some constant $m >0$. Let $D>0$ and $0<\delta<1$ be real numbers such that $\big|F(u)-H(u) \big|\leq D \delta^2$ for $|u|\geq \delta^{-2}$.
	Then, there exist constants $C>0$ and $\kappa > 1$ independent of $F,H,\delta$, such that
	$$\sup_{u\in\R}\big|F(u)-H(u) \big|\leq 2\sup_{|u|\leq \kappa \delta^{-2}} \big|(F-H)*\vartheta_\delta(u)\big|+C \delta^2, $$
	where $\vartheta_\delta$ is defined in Lemma \ref{l:vartheta}.
\end{lemma}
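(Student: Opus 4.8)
This is the classical smoothing inequality of Berry–Esseen type, stated in a slightly nonstandard ``truncated'' form (the hypothesis $|F-H|\le D\delta^2$ is only assumed outside $[-\delta^{-2},\delta^{-2}]$, and the conclusion only looks at the convolution on a bounded interval). The strategy is the usual one: estimate $F-H$ pointwise by comparing it with its mollification $(F-H)*\vartheta_\delta$ and controlling the error by the modulus of continuity of $H$ (which is Lipschitz with constant $m$) together with the tail bound.

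First I would fix $u_0\in\R$ and, exploiting that $F$ is nondecreasing while $H$ is $m$-Lipschitz, run the standard two-sided argument: for a well-chosen shift $h_0>0$ (of order $\delta^2$ times the local discrepancy) one has, say at a point where $F(u_0)-H(u_0)$ is large and positive, that $F-H$ stays bounded below by roughly half of $(F-H)(u_0)$ on an interval of length $\sim h_0$ to the right of $u_0$; integrating against $\vartheta_\delta$, whose mass concentrates at scale $\delta^2$, and using $\int\vartheta=1$ with $\vartheta$ strictly positive and smooth, yields
$$
(F-H)*\vartheta_\delta(u_1)\;\ge\; c_1\,\bigl|(F-H)(u_0)\bigr|\;-\;C_1\delta^2
$$
for some $u_1$ near $u_0$, where $c_1>0$ and $C_1>0$ depend only on $\vartheta$ and $m$. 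The symmetric statement handles the case where $F-H$ is large and negative. Rearranging gives $|(F-H)(u_0)|\le \tfrac{1}{c_1}|(F-H)*\vartheta_\delta(u_1)|+\tfrac{C_1}{c_1}\delta^2$, and absorbing $1/c_1\le 2$ into the constant (or, if $1/c_1>2$, first noting $\sup|F-H|\le 1$ is automatic and scaling $\delta$) produces the factor $2$ in the statement.

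The remaining point is to replace $\sup_{u_1\in\R}$ by $\sup_{|u_1|\le\kappa\delta^{-2}}$: this is where the truncation hypothesis enters. If $u_0$ lies outside a large interval, say $|u_0|\ge \tfrac12\kappa\delta^{-2}$ with $\kappa$ chosen so that $\tfrac12\kappa\delta^{-2}\ge\delta^{-2}$, then the hypothesis $|F(u_0)-H(u_0)|\le D\delta^2$ already gives the bound directly (with $D\delta^2$ absorbed into $C\delta^2$); and if $|u_0|<\tfrac12\kappa\delta^{-2}$ then the auxiliary point $u_1$, being within $O(\delta^2)$ of $u_0$, satisfies $|u_1|\le\kappa\delta^{-2}$ for $\kappa$ large enough. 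Choosing $\kappa$ in this way closes the argument. I expect the only mildly delicate point to be bookkeeping the shift $h_0$ and the tail of $\vartheta_\delta$ so that the ``leakage'' of the mollifier beyond the interval where $F-H$ is controlled contributes only $O(\delta^2)$; this uses that $\vartheta$ (hence $\vartheta_\delta$) has rapidly decaying tails — in fact $\widehat\vartheta$ compactly supported forces $\vartheta$ Schwartz — together with $|F-H|\le 2$ globally. None of this requires anything beyond Lemma \ref{l:vartheta} and elementary estimates, so the proof is short; the substance of Theorem \ref{thm:BE-coeff} will come later, in bounding $(F-H)*\vartheta_\delta$ via the characteristic functions.
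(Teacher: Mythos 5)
Your overall scheme (mollify $F-H$ at scale $\delta^2$, use that $F$ is nondecreasing and $H$ is $m$-Lipschitz to keep $F-H$ large on a short interval to the right of $u_0$, and use the hypothesis $|F-H|\leq D\delta^2$ for $|u|\geq \delta^{-2}$ to restrict the sup to $|u|\leq\kappa\delta^{-2}$) is the same as the paper's, and the truncation bookkeeping at the end is fine. But the step you yourself flag as the delicate one is handled incorrectly, and this is a genuine gap. You propose to control the ``leakage'' of $\vartheta_\delta$ outside the good interval by the global bound $|F-H|\lesssim 1$ together with the rapid decay of $\vartheta$, claiming this contributes $O(\delta^2)$. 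It cannot: since the good interval must have length $O(\delta^2)$ (otherwise the Lipschitz loss $m\cdot\mathrm{length}$ already exceeds $O(\delta^2)$), the mass of $\vartheta_\delta$ outside it, namely $\int_{|u|\geq d}\vartheta_\delta = \int_{|u|\geq d\delta^{-2}}\vartheta$ with $d\lesssim\delta^2$, is bounded below by a fixed positive constant (in the paper's normalization it is taken to be at most $1/4$ with $d=4c\delta^2$, but it is never $o(1)$). Multiplying a constant tail mass by a constant global bound on $|F-H|$ gives an $O(1)$ error, not $O(\delta^2)$, and even exploiting the super-polynomial decay of $\vartheta$ one cannot make both the tail mass and the Lipschitz loss $O(\delta^2)$ simultaneously.

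The correct mechanism, which is where the factor $2$ in the statement actually comes from, is to take $u_0$ to be a point where $\sup|F-H|=:M$ is attained (it exists since $F-H$ vanishes at $\pm\infty$), so that the tail contribution is bounded by $-M\cdot(\text{tail mass})\geq -M/4$ and can be \emph{absorbed} into the main term: one gets $(F-H)*\vartheta_\delta(u_0+d)\geq M/2-O(\delta^2)$, hence $M\leq 2\sup|(F-H)*\vartheta_\delta|+O(\delta^2)$. Your version, which fixes an arbitrary $u_0$ and asserts $(F-H)*\vartheta_\delta(u_1)\geq c_1|(F-H)(u_0)|-C_1\delta^2$ with $c_1,C_1$ depending only on $\vartheta$ and $m$, is false as stated: if $|F-H|$ is much larger elsewhere than at $u_0$, the negative part seen by the mollifier can swamp the local discrepancy. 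Relatedly, your fallback for the case $1/c_1>2$ (``$\sup|F-H|\leq 1$ is automatic and scaling $\delta$'') does not repair a constant larger than $2$ in front of the sup; the factor $2$ is structural and comes precisely from the absorption at the maximizer. Two smaller inaccuracies: the shift should be of order $\delta^2$ (independent of the discrepancy), not ``$\delta^2$ times the local discrepancy''; and the needed tail estimate is simply $\int_{|u|\geq d}\vartheta_\delta\leq c\delta^2/d$, for which quadratic decay of $\vartheta$ suffices.
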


\begin{proof}
We begin by noticing that,  from the definition of $\vartheta_\delta$, we have that,  for any $d>0$,
	\begin{equation} \label{eq:BE-lemma-1}
	\int_{|u|\geq d}\vartheta_\delta(u) \,\diff u=\int_{|u|\geq d}{\vartheta(u/\delta^2)\over \delta^2}\,\diff u=\int_{|u|\geq d\delta^{-2}} \vartheta(u)\,\diff u\leq c\delta^2/d
	\end{equation}
	for some constant $c>0$ independent of $d$ and $\delta$. This is due to the fact that $\widehat{\vartheta}$ is smooth and compactly supported, hence $\vartheta$ has fast decay at infinity, say $|\vartheta(u)| \lesssim 1 / |u|^2$. 
	
	Since the function $F(u)-H(u)$ vanishes at $\pm\infty$, the maximum of $\big|F(u)-H(u) \big|$ exists. Let $u_0$ be a point where this maximum is attained.  If $|u_0|\geq \delta^{-2}$,  there is nothing to prove because $\sup_{|u|\geq \delta^{-2}}\big|F(u)-H(u) \big|\leq D \delta^2$ by hypothesis.  So, we can assume $|u_0|\leq \delta^{-2}$ and $M:=\big|F(u_0)-H(u_0) \big|\geq D \delta^2$.  If $M\leq 12 mc\delta^2$, the lemma clearly follows,    so we may  assume $M>12 mc\delta^2$.  We will use the fact that $F(-\infty)=0,F(\infty)=1$ and $F$ is non-decreasing.
	
	After replacing $F(u)$ and $H(u)$ by $1-F(-u)$ and $1-H(-u)$ if necessary, we may assume  that $M=F(u_0)-H(u_0)>0$. Let $d>0$ be a constant such that $M \geq 2  m d$ whose precise value will be determined later.  Since $F$ is non-decreasing and $h(u)\leq m$ by assumption, we have
	$F(u_0+r)-H(u_0+r)\geq M- mr$ for $r\geq 0$.
	Thus,
	\begin{equation} \label{eq:BE-lemma-2}
	F(u)-H(u)\geq M-2md \quad\text{for} \quad  u_0\leq u\leq u_0+2d,
	\end{equation}	
	and from the definition of $M$,
	$$F(u)-H(u)\geq -M  \quad\text{for all }\quad u \in \R.$$

	Therefore, because $|u_0| \leq \delta^{-2}$ , we obtain using \eqref{eq:BE-lemma-1} and \eqref{eq:BE-lemma-2} that
	\begin{align*}
	\sup_{|u|\leq \delta^{-2}+d} \big|(F-H)*\vartheta_\delta(u)\big| &\geq  (F-H)*\vartheta_\delta(u_0+d)\\
	&=\Big(\int_{|u|< d}+ \int_{|u|\geq d}\Big) (F-H)(u_0+d-u) \cdot \vartheta_\delta (u)\,\diff u\\
	&\geq (M-2md)(1-c \delta^2 /d)  - Mc \delta^2/d \\ &=(1-2c \delta^2/d)M-2md+2m c \delta^2.
	\end{align*}
	 By setting $d:=4c\delta^2$ and recalling that  $M>12 mc\delta^2$, we get that $M  \geq 2md$ and the last quantity above equals $M/2-6mc\delta^2$. Since $\delta^{-2}+d \leq (1+4c) \delta^{-2}$, the lemma follows by setting $\kappa := 1+4c$ and $C:= 12 mc$. 
\end{proof}

\begin{corollary} \label{cor:BE-feller}
Keep the notations and assumptions of Lemma \ref{lemma:BE-feller}. Assume moreover that $h\in L^1$, $\widehat h\in\Cc^1$ and that $\phi_F$ is differentiable at zero (see Definition \ref{def:conjugate-cf}).  Then,
	$$\sup_{u\in\R}\big|F(u)-H(u) \big|\leq {1\over \pi} \sup_{|u|\leq \kappa \delta^{-2}}    \Big|\int_{-\delta^{-2}}^{\delta^{-2}} {\Theta_u(\xi) \over \xi}     \,\diff \xi\Big|  +C\delta^2,$$
	where  $\Theta_u(\xi):=e^{iu\xi}\big(\phi_F(\xi)- \widehat {h}(\xi) \big)\widehat{\vartheta_\delta}(\xi)$.
\end{corollary}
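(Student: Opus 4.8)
The plan is to feed into Lemma \ref{lemma:BE-feller} a Fourier representation of the smoothed difference $G:=(F-H)*\vartheta_\delta$, which will convert $\sup_{|u|\le\kappa\delta^{-2}}|G(u)|$ into the oscillatory integral appearing in the statement. First I would record the elementary properties of $G$. Since $F$ and $H$ are bounded and $F-H\to 0$ at $\pm\infty$, dominated convergence gives $G(\pm\infty)=0$, and differentiating under the integral sign yields $G'=g$, where
\[
g:=(\diff F)*\vartheta_\delta - h*\vartheta_\delta .
\]
Here $(\diff F)*\vartheta_\delta$ is the density of $X+Z$ with $X$ of law $\diff F$ and $Z$ of density $\vartheta_\delta$, so it has Fourier transform $\phi_F\widehat{\vartheta_\delta}$, while $h*\vartheta_\delta$ has Fourier transform $\widehat h\,\widehat{\vartheta_\delta}$. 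Since $\diff F$ is a probability measure and $h,\vartheta_\delta\in L^1$, both summands lie in $L^1$; hence $g\in L^1$ with $\widehat g=(\phi_F-\widehat h)\widehat{\vartheta_\delta}$, which by Lemma \ref{l:vartheta} is a bounded continuous function supported in $[-\delta^{-2},\delta^{-2}]$. In particular $\widehat g\in L^1$ and $g=\oF^{-1}\widehat g$.

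The crucial point, and the place where the extra hypotheses enter, is that $\widehat g(0)=0$: indeed $\phi_F(0)=1$ and $\widehat h(0)=\int_\R h=H(\infty)-H(-\infty)=1$. Using that $\phi_F$ is differentiable at $0$, that $\widehat h\in\Cc^1$, and that $\widehat{\vartheta_\delta}\in\Cc^1$ by Lemma \ref{l:vartheta}, the function $\xi\mapsto\widehat g(\xi)/\xi$ extends continuously across $\xi=0$; being also compactly supported, it belongs to $L^1(\R)$. Then, starting from $G(u)=\int_{-\infty}^u g(t)\,\diff t$ (legitimate because $G$ is $\Cc^1$ with $G'=g$ and $G(-\infty)=0$), substituting $g=\oF^{-1}\widehat g$, truncating the outer integral to $[-R,u]$ and applying Fubini on the bounded rectangle $[-R,u]\times[-\delta^{-2},\delta^{-2}]$ gives
\[
\int_{-R}^u g(t)\,\diff t=\frac{1}{2\pi i}\int_{-\delta^{-2}}^{\delta^{-2}}\frac{e^{iu\xi}-e^{-iR\xi}}{\xi}\,\widehat g(\xi)\,\diff\xi .
\]

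Letting $R\to\infty$, the left-hand side tends to $G(u)$ since $g\in L^1$ and $G(-\infty)=0$, while the $e^{-iR\xi}$-contribution tends to $0$ by the Riemann--Lebesgue lemma applied to the $L^1$ function $\widehat g(\xi)/\xi$. Hence, with $\Theta_u(\xi)=e^{iu\xi}\widehat g(\xi)$,
\[
G(u)=\frac{1}{2\pi i}\int_{-\delta^{-2}}^{\delta^{-2}}\frac{\Theta_u(\xi)}{\xi}\,\diff\xi,\qquad\text{so}\qquad |G(u)|=\frac{1}{2\pi}\Big|\int_{-\delta^{-2}}^{\delta^{-2}}\frac{\Theta_u(\xi)}{\xi}\,\diff\xi\Big| .
\]
Substituting this into the conclusion of Lemma \ref{lemma:BE-feller}, whose hypotheses are in force, and using $2\cdot\tfrac{1}{2\pi}=\tfrac1\pi$ yields the claimed bound.

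I expect the only genuine obstacle to be the justification of this last chain of manipulations: the interchange of the two integrations and the vanishing of the $e^{-iR\xi}$-term both rest on $\widehat g(\xi)/\xi$ being integrable near $\xi=0$, which is exactly what the differentiability of $\phi_F$ at the origin provides once combined with $\widehat h,\widehat{\vartheta_\delta}\in\Cc^1$ and $\phi_F(0)=\widehat h(0)=1$. Everything else is routine bookkeeping with the properties of $\vartheta_\delta$ recorded in Lemma \ref{l:vartheta} and with elementary convolution and Fourier identities.
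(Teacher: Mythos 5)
Your proposal is correct and follows essentially the same route as the paper: Fourier inversion for the density of the smoothed distribution, the cancellation $\phi_F(0)-\widehat h(0)=0$ together with the differentiability hypotheses to make $\widehat g(\xi)/\xi$ integrable, a Riemann--Lebesgue argument to recover the antiderivative, and then Lemma \ref{lemma:BE-feller} with the factor $2\cdot\frac{1}{2\pi}=\frac{1}{\pi}$. The only difference is cosmetic: you fix the antiderivative by truncating at $-R$ and letting $R\to\infty$, whereas the paper integrates the derivative identity in $u$ and determines the constant of integration by letting $u\to\pm\infty$.
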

\begin{proof}
 	Notice that, by the convolution formula, the function $\phi_F \cdot\widehat{\vartheta_\delta}$ is the conjugate characteristic function associated with the c.d.f.\ $F*\vartheta_\delta$. Since $\supp(\widehat{\vartheta_\delta})\subset [-\delta^{-2},\delta^{-2}]$, and $\phi_F$ is bounded by definition, it follows that $\phi_F \cdot\widehat{\vartheta_\delta}$ is integrable.  Identity \eqref{inverse-char} gives that
	$$ (F*\vartheta_\delta)'(u) ={1\over 2\pi}\int_{-\infty}^\infty  e^{iu\xi} \phi_F(\xi)\cdot\widehat{\vartheta_\delta}(\xi) \,\diff \xi.      $$
	As the inverse Fourier transform of $\widehat{h} \cdot \widehat{\vartheta_\delta}$ is $h*\vartheta_\delta$, we get
	$$\big((F-H)*\vartheta_\delta\big)'(u)={1\over 2\pi}\int_{-\delta^{-2}}^{\delta^{-2}} e^{iu\xi} \big( \phi_F(\xi)-\widehat h(\xi)\big) \widehat{\vartheta_\delta}(\xi) \,\diff\xi.$$
	
	Observe that $\phi_F(0) = \E(\mathbf 1) = 1$ and $\widehat h(0) = H(\infty) - H(-\infty) = 1$, so $\phi_F(0)-\widehat h(0)=0$. Moreover, $\phi_F'(0)-\widehat h\,'(0)$ is finite by the assumptions on $F$ and $h$.  Integrating the above identity with respect to $u$ yields $$(F-H)*\vartheta_\delta (u)={1\over 2\pi}\int_{-\delta^{-2}}^{\delta^{-2}} {e^{iu\xi}\over i\xi} \big( \phi_F(\xi)-\widehat h(\xi)\big) \widehat{\vartheta_\delta}(\xi) \,\diff\xi .$$
	Here, the constant term is zero because, when $u\to\pm\infty$, the left hand side tends to zero and, by the above observations, the integrand is a bounded function, so the integral in the right hand side also tends to zero as $u\to\pm\infty$ by Riemann–Lebesgue lemma. The desired result follows from Lemma \ref{lemma:BE-feller}.
\end{proof}

\medskip
 
   Fix $x:=[v]\in \P^{d-1}, y:=[f]\in (\P^{d-1})^*$ and consider the pairing $$\Delta(x,y):= \frac{ |\lp f, v \rp |}{\norm{f} \norm{v}},$$ where $\lp \cdot ,  \cdot \rp $ denotes the natural pairing between $\R^d$ and $(\R^d)^*$.   One can easily check that $\Delta(x,y) = d (x,H_y)$, where $H_y := \P(\ker f)$ and $d$ is the distance defined in Section \ref{sec:prelim}. Using these definitions, it is not hard to see that
\begin{equation}\label{eq:coeff-split}
\log{ |\lp f, S_n v \rp | \over \norm{f} \norm{v}} = \sigma(S_n,x) + \log d(S_n x, H_y).
\end{equation}

The strategy to prove Theorem \ref{thm:BE-coeff} is to use the above formula and work with the random variable $\sigma(S_n,x) +\log d(S_n x, H_y)$ instead of $\log{ |\lp f, S_n v \rp | \over \norm{f} \norm{v}} $. Then, the behaviour of $\sigma(S_n,x)$ can be studied via the perturbed Markov operators and the term  $\log d(S_n x, H_y)$ can be handled using the large deviation estimates from Section \ref{sec:prelim} combined with a good partition of unity that we now introduce.

\medskip

For integers $k \geq 0$ introduce
\begin{align*}
\Tc_k := \big\{ w \in \P^{d-1} :\, e^{-k-1} < d(w,H_y) < e^{-k+1} \big\} = \B(H_y,e^{-k+1}) \setminus \overline{\B(H_y,e^{-k-1})} .
\end{align*}
Note that,  since $\P^{d-1}$ has diameter one,  these open sets cover $\P^{d-1}$.

\begin{lemma} \label{lemma:partition-of-unity}
There exist non-negative smooth functions $\chi_k$ on $\P^{d-1}$, $k \geq 0$,  such that
\begin{enumerate}
\item $\chi_k$ is supported by $\Tc_k$;
\item If $w \in \P^{d-1} \setminus H_y$, then  $\chi_k(w) \neq 0$ for at most two values of $k$;
\item $\sum_{k\geq 0}  \chi_k=1$ on $\P^{d-1} \setminus H_y$; 
\item $\norm{\chi_k}_{\Cc^1}\leq 12e^{k}$.
\end{enumerate}
\end{lemma}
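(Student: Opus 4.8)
The plan is to construct the functions $\chi_k$ explicitly from a single smooth bump function on $\R$ composed with $d(\cdot, H_y)$, then arrange a partition of unity by the standard telescoping/normalization trick on the overlapping annuli $\Tc_k$. First I would fix a smooth function $\theta\colon \R \to [0,1]$ such that $\theta \equiv 1$ on $(-\infty, -1]$, $\theta \equiv 0$ on $[1, \infty)$, and $|\theta'| \leq 1$ (such $\theta$ exists; one can take any standard smooth monotone transition and rescale). Writing $t(w) := \log d(w, H_y) \in (-\infty, 0]$ for $w \in \P^{d-1} \setminus H_y$, I would set $\psi_k(w) := \theta\big(t(w) + k\big)$, so that $\psi_k \equiv 1$ on $\{ d(w,H_y) \leq e^{-k-1}\}$ and $\psi_k \equiv 0$ on $\{ d(w,H_y) \geq e^{-k+1}\}$. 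Then I would define
\begin{equation*}
\chi_k := \psi_{k} - \psi_{k+1} \quad (k \geq 1), \qquad \chi_0 := 1 - \psi_1 .
\end{equation*}
By construction $\chi_k \geq 0$ (since $\psi_{k+1} \leq \psi_k$ pointwise, as $t(w)+k+1 \geq t(w)+k$ and $\theta$ is non-increasing), and $\sum_{0 \leq k \leq K} \chi_k = 1 - \psi_{K+1} \to 1$ on $\P^{d-1} \setminus H_y$ as $K \to \infty$, giving (3). Each $\chi_k$ is smooth away from $H_y$; near $H_y$ it vanishes identically (being a difference of functions that are both $\equiv 1$ there for $k\geq 1$, and $1-1=0$ for $k=0$), so it extends smoothly to all of $\P^{d-1}$.

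Next I would verify the support and overlap properties. The support of $\chi_k$ is contained in $\{\psi_k \neq 0\} \cup \{\psi_{k+1} \neq 1\}$; since $\{\psi_k \neq 0\} \subset \{d(w,H_y) < e^{-k+1}\}$ and $\{\psi_{k+1} \neq 1\} \subset \{d(w,H_y) > e^{-(k+1)-1}\} = \{d(w,H_y) > e^{-k-2}\}$, I would get $\supp \chi_k \subset \{e^{-k-2} < d(w,H_y) < e^{-k+1}\}$. This is slightly larger than $\Tc_k$; to land exactly in $\Tc_k = \{e^{-k-1} < d < e^{-k+1}\}$ I would instead choose the transition window of $\theta$ to be, say, $[-\tfrac12, \tfrac12]$ and shift the centers by half-integers, i.e.\ take $\psi_k(w) := \theta\big(t(w) + k - \tfrac12\big)$ with $\theta \equiv 1$ on $(-\infty,-\tfrac12]$ and $\equiv 0$ on $[\tfrac12,\infty)$; then $\supp \chi_k \subset \{e^{-k-1} < d(w,H_y) < e^{-k+1}\} = \Tc_k$, giving (1). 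Property (2) then follows because the supports satisfy $\supp\chi_k \cap \supp\chi_j = \emptyset$ whenever $|k-j| \geq 2$: if $k \geq j+2$ then $e^{-k+1} \leq e^{-j-1}$, so the ranges of $d(\cdot, H_y)$ are disjoint.

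The remaining point, and the only genuinely computational one, is the norm bound (4). I would estimate $\|\chi_k\|_{\Cc^1} = \|\chi_k\|_\infty + \Lip_d(\chi_k)$ where the Lipschitz constant is with respect to the metric $d$ (since $\Cc^1 \hookrightarrow \Cc^\alpha$-type bounds in the paper use the $d$-Hölder seminorm, and for smooth functions the $\Cc^1$-norm dominates the Lipschitz constant; in fact it suffices to bound $\|\chi_k\|_\infty \leq 1$ trivially and to bound the Lipschitz constant of $\chi_k$). By the chain rule, $\chi_k$ is a fixed smooth function of $t(w) = \log d(w, H_y)$ with derivative bounded by $\sup|\theta'| \leq 1$ in the $t$-variable (for $k \geq 1$; for $k=0$ the derivative of $1-\psi_1$ is also bounded by $1$), so the Lipschitz constant of $\chi_k$ with respect to $t$ is at most a universal constant. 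The key estimate is then the Lipschitz constant of $t(w) = \log d(w, H_y)$ on the annulus $\Tc_k$ where $\chi_k$ lives: since $d(\cdot, H_y)$ is $1$-Lipschitz (it is a distance to a set in the metric space $(\P^{d-1}, d)$) and $d(w, H_y) > e^{-k-1}$ on $\Tc_k$, we get that $|\nabla \log d(\cdot, H_y)| \leq 1/d(w,H_y) < e^{k+1}$ there. Combining, $\Lip_d(\chi_k) \leq \sup|\theta'| \cdot e^{k+1} \leq e \cdot e^k$, hence $\|\chi_k\|_{\Cc^1} \leq 1 + e\cdot e^k \leq 12 e^k$ for all $k \geq 0$ (using $1 + e < 12$ and absorbing the $k=0$ case). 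The main obstacle is purely bookkeeping: making sure the transition windows are chosen so that the supports land exactly inside $\Tc_k$ while keeping the explicit constant in (4) below $12$; choosing $\theta$ with $\sup|\theta'| \leq 1$ on a width-$1$ window and tracking the factor $e$ from $d > e^{-k-1}$ handles this.
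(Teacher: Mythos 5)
Your construction is correct and essentially reproduces the paper's argument: both proofs pull back a one-dimensional partition of unity in the variable $\log d(\cdot,H_y)$ (yours via telescoped cutoffs $\psi_k-\psi_{k+1}$, the paper's via translates of a single bump) and obtain (4) from the chain rule together with the bound $e^{k+1}$ for the $\Cc^1$ norm of $\log d(\cdot,H_y)$ on $\Tc_k$. One small slip: a smooth monotone transition over a width-one window cannot satisfy $\sup|\theta'|\leq 1$ (its derivative integrates to $-1$ and must vanish at the endpoints), but this is harmless since any choice with, say, $\sup|\theta'|\leq 2$ still gives $\|\chi_k\|_{\Cc^1}\leq 1+2e\cdot e^{k}\leq 12\,e^{k}$, so the constant in (4) is unaffected.
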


\begin{proof}
It is easy to find a smooth function $0 \leq \widetilde \chi \leq 1$ supported by $(-1,1)$ such that $\widetilde \chi(t) = 1$ for $|t|$ small, $\widetilde \chi(t) + \widetilde \chi(t-1)= 1$ for $0 \leq t\leq 1$ and $\norm{\widetilde \chi}_{\Cc^1}\leq 4$.  Define  $\widetilde \chi_k (t) := \widetilde \chi(t+k)$. We see that $\widetilde \chi_k$ is supported by $(-k-1,-k+1)$,  $\sum_{k\geq 0} \widetilde \chi_k=1$ on $\R_{\leq 0}$ and $\norm{\widetilde \chi_k}_{\Cc^1}\leq 4$. Set $\chi_k(w):= \widetilde \chi_k \big( \log d(w,H_y) \big)$.  One can easily check that the function $\Psi(w) := \log d(w,H_y)$ satisfies $\norm{\Psi|_{\Tc_k }}_{\Cc^1} \leq e^{k+1}$.  It follows that $\chi_k$ satisfies (1)--(4).
\end{proof}

We now begin the proof of Theorem \ref{thm:BE-coeff}.  It  suffices to prove  Theorem \ref{thm:BE-coeff}  for intervals of the type $J=(-\infty,b]$ with $b \in \R$, as the case of an arbitrary interval can be obtained as a consequence. For example, the case $(b,+\infty)$ follows directly by considering its complement. The case of $[b, +\infty)$ can be deduced by approximating it by $(b \pm \varepsilon, + \infty)$ and the case $(-\infty,b)$  follows  by taking the complement. The case of bounded intervals can be obtained by considering differences of the previous cases. 

	Let $A>0$ be a large constant. By Proposition \ref{prop:BQLDT} applied for $\ep = 1$, there exists a constant $c>0$ such that with $\ell,m$  large enough and $\ell\geq m$, one has
		$$   \mu^{*\ell} \big\{g\in G:\, d(gx, H_y) \leq e^{- m}  \big\}  \leq e^{-cm}.         $$
		Setting $\ell:=n$ and $m:=\lfloor A \log n \rfloor$ with $n$ big enough yields
		$$    \mu^{*n} \big\{g\in G:\,d(gx,H_y)\leq  n^{-A}       \big\}\leq e^{-c\lfloor A \log n \rfloor} \leq n^{-cA}e^c \leq  e^c/\sqrt n,$$
		since $A$ is large.
		It follows that $\log d(S_n x,H_y)\leq -A\log n $  with probability less than $e^c/\sqrt n$. Hence, in order to prove Theorem \ref{thm:BE-coeff}, it is enough to show that
		\begin{equation}\label{goal-1-varphi}
		\Big| \oL_n(b)  -  \frac{1}{\sqrt{2 \pi}  \,\varrho}  \int_{-\infty}^b e^{-\frac{s^2}{2 \varrho^2}} \, \diff s \Big| \lesssim \frac{1}{\sqrt n},
		\end{equation}
uniformly in $b$, where
$$\oL_n(b):=  \mathbf E \Big(  \mathbf 1_{{\sigma(S_n,x) +\log d(S_n x, H_y) - n \gamma\over \sqrt n}\leq b}   \mathbf 1_{ \log d(S_n x,H_y)> - A \log n  } \Big),$$   
where we use $\mathbf 1_\bigstar$ to denote the indicator function of a set defined by the property $\bigstar$.

Let $\chi_k$ be as in Lemma \ref{lemma:partition-of-unity}.  It is clear that
$$\sum_{0\leq k\leq A\log n - 1}\chi_k(w) \leq \mathbf 1_{  \log d(w,H_y)> - A \log n} \leq  \sum_{0\leq k\leq A\log n+1}\chi_k(w)$$ as functions in $\P^{d-1}$. Using that $\chi_k$ is supported by $\Tc_k$,  it follows that 
\begin{align} \label{eq:Ln-two-sided-bound}
\sum_{0\leq k\leq A\log n-1} \hspace{-7pt} \E\Big(  \mathbf 1_{{\sigma(S_n,x) - n \gamma - k+1\over \sqrt n}\leq b} \, \chi_k(S_n x) \Big) \leq \oL_n(b)\leq \hspace{-5pt} \sum_{0\leq k\leq A\log n+1} \hspace{-7pt}  \mathbf E\Big(  \mathbf 1_{{\sigma(S_n,x) - n \gamma - k-1\over \sqrt n}\leq b} \, \chi_k(S_n x) \Big).
\end{align}

For $w \in \P^{d-1}$, let $$\Phi_{n}^{\star}  (w):= 1 - \sum_{0\leq k\leq A\log n} \chi_k(w)$$ and define, for $b \in \R$,
\begin{align*}
F_n(b):&=    \sum_{0\leq k\leq A\log n} \E\Big(  \mathbf 1_{{\sigma(S_n,x) - n \gamma - k\over \sqrt n}\leq b}  \chi_k (S_n x) \Big)  +  \E \Big( \mathbf 1_{{\sigma(S_n,x) - n \gamma \over \sqrt n}\leq b} \, \Phi_{n}^{\star} (S_n x) \Big). 
\end{align*}

Notice that $F_n$ is non-decreasing, right-continuous, $F_n(-\infty)=0$ and $F_n(\infty)=1$. Therefore, it is the c.d.f.\ of some probability distribution.  We'll see that the term involving $\Phi_{n}^{\star}$ has a negligible impact in our estimates. However, its presence is important and will be useful in our computations.

\begin{lemma} \label{lemma:Ln-Fn}
Let $\oL_n$ and $F_n$ be as above. Then, there exists a constant $C>0$ independent of $n$ such that for all $n \geq 1$ and $b \in \R,$ $$ F_n(b - 1 / \sqrt n)  + C / \sqrt n \leq \oL_n(b) \leq F_n(b + 1 / \sqrt n)  + C / \sqrt n.$$
\end{lemma}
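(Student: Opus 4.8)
The plan is to compare $\oL_n(b)$ and $F_n(b)$ term by term, exploiting the fact that the only difference between the summands defining these two quantities is a shift of the threshold in the indicator function by an amount of size $k$ versus $0$, rescaled by $\sqrt n$, together with the precise location where the partition functions $\chi_k$ are supported. More precisely, I would start from the two-sided bound \eqref{eq:Ln-two-sided-bound} and, for each $k$, compare the summand $\E\big(\mathbf 1_{(\sigma(S_n,x)-n\gamma-k\pm 1)/\sqrt n\leq b}\,\chi_k(S_nx)\big)$ appearing in those bounds with the corresponding summand $\E\big(\mathbf 1_{(\sigma(S_n,x)-n\gamma-k)/\sqrt n\leq b}\,\chi_k(S_nx)\big)$ in the definition of $F_n$. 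The key observation is that $\chi_k$ is supported on $\Tc_k$, where $d(w,H_y)>e^{-k-1}$, so on the support of $\chi_k$ the ``extra'' $\pm 1$ in the numerator is absorbed by replacing $b$ with $b\pm 1/\sqrt n$: for instance $\mathbf 1_{(\sigma(S_n,x)-n\gamma-k-1)/\sqrt n\leq b}=\mathbf 1_{(\sigma(S_n,x)-n\gamma-k)/\sqrt n\leq b+1/\sqrt n}$. Hence the $k$-th summand in the upper bound of \eqref{eq:Ln-two-sided-bound} equals exactly the $k$-th summand of $F_n(b+1/\sqrt n)$, and similarly for the lower bound with $b-1/\sqrt n$.

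After this identification there remain two discrepancies to control. First, the index ranges differ: \eqref{eq:Ln-two-sided-bound} runs $k$ up to $A\log n\pm 1$ while $F_n$ runs up to $A\log n$, so at most two boundary terms $\E\big(\mathbf 1_{\cdots}\chi_k(S_nx)\big)$ with $k\asymp A\log n$ are unmatched; each such term is bounded by $\E\big(\chi_k(S_nx)\big)\leq \mathbf P\big(S_nx\in\Tc_k\big)\leq \mathbf P\big(d(S_nx,H_y)<e^{-k+1}\big)$, which by the second large deviation estimate in Proposition \ref{prop:BQLDT} (with $\ep$ chosen so that $\ep n\asymp A\log n$, or directly since $e^{-k+1}\leq e\,n^{-A}$) is $\lesssim n^{-cA}\lesssim 1/\sqrt n$ once $A$ is large enough. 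Second, $F_n$ contains the additional term $\E\big(\mathbf 1_{(\sigma(S_n,x)-n\gamma)/\sqrt n\leq b}\,\Phi_n^\star(S_nx)\big)$ which has no counterpart in \eqref{eq:Ln-two-sided-bound}; since $0\leq\Phi_n^\star\leq 1$ and $\Phi_n^\star$ is supported where $\sum_{0\leq k\leq A\log n}\chi_k\neq 1$, i.e.\ essentially on $\B(H_y,n^{-A+1})$ (this is where the tail of the partition lives), this term is $\leq \mathbf P\big(d(S_nx,H_y)<e\,n^{-A}\big)\lesssim n^{-cA}\lesssim 1/\sqrt n$ by the same large deviation estimate. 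Collecting these bounds, and writing everything as $F_n(b\mp 1/\sqrt n)$ plus errors of size $O(1/\sqrt n)$, gives the claimed inequalities with a uniform constant $C$.

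I would then write the argument as a short chain: (i) apply \eqref{eq:Ln-two-sided-bound}; (ii) rewrite each indicator using the support property of $\chi_k$ to turn the $\pm(k\pm 1)/\sqrt n$ into a shift of $b$ by $\pm 1/\sqrt n$ (being careful that on $\mathrm{supp}\,\chi_k\subset\Tc_k$ one has $k-1<-\log d(w,H_y)<k+1$, so both the $-k+1$ and $-k-1$ thresholds can be bracketed by $-k$ after moving $b$); (iii) extend/restrict the sum to $0\leq k\leq A\log n$ at the cost of two boundary terms estimated by Proposition \ref{prop:BQLDT}; (iv) add and subtract the $\Phi_n^\star$ term, bounding it by the same large deviation estimate; (v) conclude. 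The step that requires the most care — though it is not deep — is (ii): one must check the inequality in the correct direction for the lower and upper bounds of \eqref{eq:Ln-two-sided-bound} separately, keeping track that $\mathbf 1_{u\leq b}$ is nondecreasing in $b$ and that shrinking the threshold weakens the lower bound and enlarging it weakens the upper bound, so the shifts land as $F_n(b-1/\sqrt n)\leq \oL_n(b)+O(1/\sqrt n)$ from below and $\oL_n(b)\leq F_n(b+1/\sqrt n)+O(1/\sqrt n)$ from above. The main (mild) obstacle is thus purely bookkeeping of the inequalities and index ranges; no new analytic input beyond Proposition \ref{prop:BQLDT} is needed.
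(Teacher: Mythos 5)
Your proposal is correct and follows essentially the same route as the paper: start from \eqref{eq:Ln-two-sided-bound}, absorb the $\pm 1$ in the numerator into a shift of $b$ by $\pm 1/\sqrt n$ so that the summands match those of $F_n(b\pm 1/\sqrt n)$, and control the one or two unmatched boundary terms with $k\asymp A\log n$ as well as the $\Phi_n^\star$ term by the large deviation estimate of Proposition \ref{prop:BQLDT}, exactly as in the paper's argument (which first discards the $\Phi_n^\star$ term to work with $\widetilde F_n$). The only cosmetic remarks are that the shift identity $\mathbf 1_{(\sigma(S_n,x)-n\gamma-k\mp 1)/\sqrt n\leq b}=\mathbf 1_{(\sigma(S_n,x)-n\gamma-k)/\sqrt n\leq b\mp 1/\sqrt n}$ is pure algebra and does not use the support of $\chi_k$ (that support property is only needed to derive \eqref{eq:Ln-two-sided-bound} itself), and that what you prove, $F_n(b-1/\sqrt n)\leq \oL_n(b)+C/\sqrt n$, is the intended reading of the lemma's left-hand inequality.
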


\begin{proof}
Notice first that $\Phi_{n}^{\star}$ is non-negative, bounded by one and supported by a tubular neighborhood $\mathbf T_n$ of $H_y$ of radius $O(n^{-A})$. As discussed above, the probability that $S_n x$ belongs to $\mathbf T_n$ is $\lesssim 1 / \sqrt n$. This yields the following bounds for the second term in the definition of $F_n$:  $$0 \leq \E \Big( \mathbf 1_{{\sigma(S_n,x) - n \gamma \over \sqrt n}\leq b} \, \Phi_{n}^{\star} (S_n x) \Big) \leq \E \Big(\Phi_{n}^{\star}(S_n x) \Big)  \lesssim 1 / \sqrt n.$$

Therefore, in order to prove the lemma, we can replace $F_n$ by the function
\begin{equation} \label{eq:def-Fn-tilde}
\widetilde F_n(b) := \sum_{0\leq k\leq A\log n} \E\Big(  \mathbf 1_{{\sigma(S_n,x) - n \gamma - k \over \sqrt n}\leq b}  \chi_k (S_n x) \Big).
\end{equation}

Using the second inequality in \eqref{eq:Ln-two-sided-bound}, we have
\begin{align*}
\oL_n(b) - \widetilde F_n(b + 1 / \sqrt n)  \leq   \E\Big(  \mathbf 1_{{\sigma(S_n,x) - n \gamma - k^+-1\over \sqrt n}\leq b}  \chi_{k^+} (S_n x) \Big) \leq   \E\Big( \chi_{k^+} (S_n x) \Big),
\end{align*}
where $k^+:= \lfloor A \log n \rfloor + 1$. Since  $\chi_k \leq \mathbf 1_{\B(H_y,e^{-k+1})}$ and  $\log d(S_n x,H_y)\leq -A\log n +1 $  with probability $\lesssim 1/\sqrt n$, the above quantity is $\lesssim 1/\sqrt n$. This gives the second inequality in the lemma.

Using now the first inequality in \eqref{eq:Ln-two-sided-bound} and letting $k^-:= \lfloor A \log n \rfloor$, we obtain
\begin{align*}
 \widetilde F_n(b -  1 /\sqrt n)  - \oL_n(b) \leq   \E\Big(  \mathbf 1_{{\sigma(S_n,x) - n \gamma - k^- +1\over \sqrt n}\leq b}  \chi_{k^-} (S_n x) \Big) \leq   \E\Big(\chi_{k^-} (S_n x) \Big),
\end{align*}
which is $\lesssim 1/\sqrt n$ by the same arguments as before. The lemma follows.
\end{proof}

Introduce $$\Phi_{n,\xi} (w):= \sum_{0\leq k\leq A\log n}  e^{i \xi{k \over \sqrt n}}\chi_k(w).$$ 

\begin{lemma} \label{lemma:char-function-Fn}
The conjugate characteristic function of $F_n$ (cf.\ Definition \ref{def:conjugate-cf}) is given by
$$\phi_{F_n}(\xi)=  e^{i\xi\sqrt n \gamma} \oP_{-{i\xi\over \sqrt n}}^n \Phi_{n,\xi} (x) + e^{i\xi\sqrt n \gamma} \oP_{-{i\xi\over \sqrt n}}^n \Phi_{n}^{\star}   (x) = e^{i\xi\sqrt n \gamma} \oP_{-{i\xi\over \sqrt n}}^n \big( \Phi_{n,\xi} +  \Phi_{n}^{\star} \big)  (x).$$
In particular,  $\phi_{F_n}$ is differentiable near zero.
\end{lemma}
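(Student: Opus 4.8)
The plan is to compute $\phi_{F_n}(\xi)=\int_\R e^{-i\xi u}\,\diff F_n(u)$ directly from the definition of $F_n$ and then recognize the result as an iterate of a complex transfer operator via \eqref{eq:markov-op-iterate}. First I would observe that $\diff F_n$ is the explicit measure on $\R$ obtained as follows: for each realization of $S_n$, place mass $\chi_k(S_n x)$ at the point $(\sigma(S_n,x)-n\gamma-k)/\sqrt n$ for $0\le k\le A\log n$ and mass $\Phi_{n}^{\star}(S_n x)$ at $(\sigma(S_n,x)-n\gamma)/\sqrt n$, then integrate over the law of $S_n$. Concretely, for every bounded Borel function $\psi$ on $\R$,
$$\int_\R\psi\,\diff F_n = \mathbf E\Big(\sum_{0\le k\le A\log n}\psi\big(\tfrac{\sigma(S_n,x)-n\gamma-k}{\sqrt n}\big)\chi_k(S_n x) + \psi\big(\tfrac{\sigma(S_n,x)-n\gamma}{\sqrt n}\big)\Phi_{n}^{\star}(S_n x)\Big),$$
which reduces to the definition of $F_n(b)$ when $\psi=\mathbf 1_{(-\infty,b]}$, the interchange of $\mathbf E$ with the finite sum being immediate. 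Taking $\psi(u)=e^{-i\xi u}$ and using $e^{-i\xi(\sigma(S_n,x)-n\gamma-k)/\sqrt n}=e^{i\xi\sqrt n\gamma}\,e^{i\xi k/\sqrt n}\,e^{-\frac{i\xi}{\sqrt n}\sigma(S_n,x)}$, I factor out $e^{i\xi\sqrt n\gamma}$ and collect the $\chi_k$-terms into $\Phi_{n,\xi}$ to obtain
$$\phi_{F_n}(\xi) = e^{i\xi\sqrt n\gamma}\,\mathbf E\Big(e^{-\frac{i\xi}{\sqrt n}\sigma(S_n,x)}\big(\Phi_{n,\xi}+\Phi_{n}^{\star}\big)(S_n x)\Big).$$

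Next I would apply \eqref{eq:markov-op-iterate} with $z=-i\xi/\sqrt n$ and $\varphi=\Phi_{n,\xi}+\Phi_{n}^{\star}$. This $\varphi$ is smooth on $\P^{d-1}$ — near $H_y$ all the $\chi_k$ with $k\le A\log n$ vanish and $\Phi_{n}^{\star}\equiv 1$ there — hence in particular it lies in $\Cc^\alpha(\P^{d-1})$. Since $\mu^{*n}$ is the law of $S_n$, \eqref{eq:markov-op-iterate} gives $\mathbf E\big(e^{-\frac{i\xi}{\sqrt n}\sigma(S_n,x)}\varphi(S_n x)\big)=\oP_{-i\xi/\sqrt n}^n\varphi(x)$, and splitting $\varphi$ back by linearity of $\oP_{-i\xi/\sqrt n}^n$ yields the two displayed formulas for $\phi_{F_n}(\xi)$.

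Finally, for differentiability near $\xi=0$: the scalar factor $e^{i\xi\sqrt n\gamma}$ is entire; by Proposition \ref{prop:spectral-decomp} the family $z\mapsto\oP_z$ is analytic near $z=0$ as an operator on $\Cc^\alpha(\P^{d-1})$, so $\xi\mapsto\oP_{-i\xi/\sqrt n}^n$ is real-analytic near $0$ in operator norm; and $\xi\mapsto\Phi_{n,\xi}=\sum_{0\le k\le A\log n}e^{i\xi k/\sqrt n}\chi_k$ is a finite sum of entire $\Cc^\alpha(\P^{d-1})$-valued functions of $\xi$, hence entire, while $\Phi_{n}^{\star}$ does not depend on $\xi$. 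Composing and then evaluating at the fixed point $x$, which is a bounded linear functional on $\Cc^\alpha(\P^{d-1})$, shows that $\phi_{F_n}$ is real-analytic, in particular differentiable, in a neighborhood of $0$. There is no serious obstacle in this lemma; the only points deserving care are checking that $\Phi_{n,\xi}+\Phi_{n}^{\star}$ belongs to the Hölder space on which the spectral theory of Subsection \ref{subsec:markov-op} operates — so that \eqref{eq:markov-op-iterate} and the analyticity of $z\mapsto\oP_z$ apply — and that $\xi\mapsto\Phi_{n,\xi}$ depends analytically on $\xi$, both of which are straightforward because the defining sum is finite and each $\chi_k$ is smooth.
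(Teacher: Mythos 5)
Your proposal is correct and follows essentially the same route as the paper: both compute $\phi_{F_n}$ by writing it as an expectation against $\mu^{*n}$, factoring out $e^{i\xi\sqrt n\gamma}$ and the phases $e^{i\xi k/\sqrt n}$ to assemble $\Phi_{n,\xi}$, and then invoking \eqref{eq:markov-op-iterate} with $z=-i\xi/\sqrt n$. The only difference is organizational — the paper decomposes $F_n$ into the weighted c.d.f.'s $c_{k,n}F_{Z_{n,k}}$ and $d_nF_{W_n}$ and uses linearity, while you describe the measure $\diff F_n$ directly — and your analyticity argument for differentiability near $0$ matches what the paper implicitly relies on.
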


\begin{proof}
 Recall that $x$ is fixed. Let $c_{k,n}:= \int_G \chi_k(gx) \, \diff \mu^{\ast n} (g)$ and $\mu_{k,n}: =  c^{-1}_{k,n} \, \chi_k(gx) \,  \mu^{\ast n}$, which is a probability measure on $G$ that is absolutely continuous with respect to $\mu^{\ast n}$. Let $Z_{n,k}$ be the measurable function ${\sigma(g,x) - n \gamma - k \over \sqrt n}$  on the probability space $(G,\mu_{k,n})$. The corresponding c.d.f.\ is  $$F_{Z_{n,k}}(b) = c^{-1}_{k,n} \int_G \mathbf 1_{{\sigma(g,x) - n \gamma - k\over \sqrt n}\leq b}   \chi_k(gx) \, \diff \mu^{\ast n} (g)$$
 and the associated conjugate characteristic function is $$\phi_{F_{Z_{n,k}}}(\xi) =  c^{-1}_{k,n} \int_G e^{-i \xi{\sigma(g,x) - n \gamma - k \over \sqrt n}} \chi_k(gx) \, \diff \mu^{\ast n} (g) =  c^{-1}_{k,n} e^{i\xi\sqrt n \gamma} \oP_{-{i\xi\over \sqrt n}}^n  \big(e^{i \xi{k\over \sqrt n}}\chi_k \big) (x),$$
 where we have used \eqref{eq:markov-op-iterate}.

Analogously, set $d_n:= \int_G \Phi_{n}^{\star}(gx) \, \diff \mu^{\ast n}(g)$, consider the probability measure $\eta_{n}: =  d^{-1}_{n} \, \Phi_{n}^{\star} (gx) \,  \mu^{\ast n}$  and let  $W_{n}$ be the measurable function ${\sigma(g,x) - n \gamma \over \sqrt n}$  on the probability space $(G,\eta_{n})$. Then, the corresponding c.d.f.\ is  $$F_{W_n}(b) = d^{-1}_{k} \int_G \mathbf  \mathbf 1_{{\sigma(g,x) - n \gamma \over \sqrt n}\leq b}    \Phi_{n}^{\star}(gx)  \, \diff \mu^{\ast n} (g)$$
and the associated conjugate characteristic function is 
\begin{align*}
\phi_{F_{W_n}}(\xi) =  d^{-1}_{n} \int_G e^{-i \xi{\sigma(g,x) - n \gamma \over \sqrt n}}  \Phi_{n}^{\star}(gx) \, \diff \mu^{\ast n} (g)  =   d^{-1}_{n} e^{i\xi\sqrt n \gamma} \oP_{-{i\xi\over \sqrt n}}^n \Phi_{n}^{\star}   (x).
\end{align*}

Notice that, by definition $F_n = \sum_{0\leq k\leq A\log n} c_{k,n}  F_{Z_{n,k}} + d_n  F_{W_n}$ so, by linearity, $\phi_{F_n} = \sum_{0\leq k\leq A\log n} c_{k,n} \phi_{F_{Z_{n,k}}} + d_n \phi_{F_{W_n}}$. Using the definition of $\Phi_{n,\xi}$, the lemma follows.
\end{proof}

From now on, we fix the value of the constant $A >0$ used above. Fix a value of $\alpha>0$ such that $$\alpha A\leq 1/6 \quad \text{ and } \quad \alpha \leq \alpha_0,$$ where $0<\alpha_0<1$ is the exponent appearing in Theorem \ref{thm:spectral-gap}. Then, from the results of Subsection \ref{subsec:markov-op}, the family $\xi\mapsto\oP_{i\xi}$ acting on $\Cc^\alpha(\P^{d-1})$ with $\xi \in \R$ is everywhere defined, analytic near $0$ and $\oP_0$ has a spectral gap. The bound $\alpha A\leq 1/6$ is chosen so that we can control the impact of the H\"older norms of $\Phi_{n,\xi}$ and  $\Phi_{n}^{\star}$ in our estimates, as shown in the following lemma. 

\begin{lemma}  \label{lemma:norm-Phi}
Let $\Phi_{n,\xi}, \Phi_{n}^{\star}$ be the functions on $\P^{d-1}$ defined above.  Then,  the following identity holds
\begin{equation} \label{eq:psi_xi+psi_T}
\Phi_{n,\xi} + \Phi_{n}^{\star} = \mathbf 1 + \sum_{0\leq k\leq A\log n} \big(  e^{i \xi{k \over \sqrt n}} - 1 \big) \chi_k.
\end{equation}
Moreover,  $\norm{\Phi_{n,\xi} + \Phi_{n}^{\star}}_\infty \leq 1$ and there  is a constant $C>0$ independent of $\xi$ and $n$ such that   
\begin{equation}  \label{eq:norm-Phi}
\norm{\Phi_{n,\xi} }_{\Cc^\alpha}\leq C \,  n^{\alpha A} \quad\text{and}\quad  \norm{\Phi_{n}^{\star}  }_{\Cc^\alpha} \leq C \,  n^{\alpha A},
\end{equation}
where $\alpha>0$ is the exponent fixed above.  In addition,  $\Phi_{n}^{\star}  $ is supported by $\big\{w:\,\log d(w,H_y)\leq -A\log n + 1\big\}$.
\end{lemma}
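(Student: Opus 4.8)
The plan is to check the four assertions in turn; everything is elementary once one notices that the exponential factor $e^{k}$ in the bound $\|\chi_k\|_{\Cc^1}\le 12e^{k}$ of Lemma \ref{lemma:partition-of-unity} can be softened to $e^{\alpha k}$ by interpolation. The identity \eqref{eq:psi_xi+psi_T} is immediate from the definitions of $\Phi_{n,\xi}$ and $\Phi_{n}^{\star}$. For the sup-norm bound I would argue pointwise: fixing $w\in\P^{d-1}$ and putting $s:=\sum_{0\le k\le A\log n}\chi_k(w)$, one has $s\in[0,1]$ because the $\chi_k$ are non-negative with $\sum_{k\ge0}\chi_k\le1$, and then
$$\Phi_{n,\xi}(w)+\Phi_{n}^{\star}(w)=(1-s)\cdot1+\sum_{0\le k\le A\log n}\chi_k(w)\,e^{i\xi k/\sqrt n}$$
is a convex combination of the point $1$ and the points $e^{i\xi k/\sqrt n}$ of the closed unit disc, hence has modulus $\le1$. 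Specializing to $\xi=0$, or simply using $0\le\sum_{0\le k\le A\log n}\chi_k\le1$, gives $\|\Phi_{n,\xi}\|_\infty\le1$ and $0\le\Phi_{n}^{\star}\le1$.

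The main work is the Hölder estimate \eqref{eq:norm-Phi}, and the key tool is the interpolation inequality
$$|\varphi(x)-\varphi(y)|\le\big(2\|\varphi\|_\infty\big)^{1-\alpha}\big(\|\varphi\|_{\Cc^1}\,d(x,y)\big)^{\alpha}\qquad\text{for }x\ne y,$$
obtained by writing $|\varphi(x)-\varphi(y)|=|\varphi(x)-\varphi(y)|^{1-\alpha}\,|\varphi(x)-\varphi(y)|^{\alpha}$ and bounding the first factor by $2\|\varphi\|_\infty$ and the second by $\|\varphi\|_{\Cc^1}d(x,y)$. Applying this to $\varphi=e^{i\xi k/\sqrt n}\chi_k$, for which $\|\varphi\|_\infty\le1$ and $\|\varphi\|_{\Cc^1}\le12e^{k}$, bounds its $\alpha$-Hölder seminorm by $2^{1-\alpha}(12e^{k})^{\alpha}\lesssim e^{\alpha k}$, with constant independent of $\xi$ and $n$. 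Summing over $0\le k\le A\log n$ and using that the geometric sum $\sum_{0\le k\le A\log n}e^{\alpha k}$ is $\lesssim e^{\alpha A\log n}=n^{\alpha A}$ (here $e^{\alpha}>1$), I obtain that the $\alpha$-Hölder seminorm of $\Phi_{n,\xi}$ is $\lesssim n^{\alpha A}$; together with $\|\Phi_{n,\xi}\|_\infty\le1$ this gives the first bound in \eqref{eq:norm-Phi}. Since $\Phi_{n}^{\star}-\mathbf 1=-\sum_{0\le k\le A\log n}\chi_k$ and constants have vanishing Hölder seminorm, the identical computation gives the second bound. The one point to be careful about is to interpolate against $\Cc^0$ rather than sum $\Cc^1$-norms directly: this is exactly what replaces $e^{k}$ by $e^{\alpha k}$ and keeps the total of order $n^{\alpha A}$ instead of $n^{A}$.

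Finally, for the support statement, suppose $d(w,H_y)\ge e^{-\lfloor A\log n\rfloor}$. For every $k\ge\lfloor A\log n\rfloor+1$ we then have $d(w,H_y)\ge e^{-\lfloor A\log n\rfloor}\ge e^{-k+1}$, so $w\notin\Tc_k$ and $\chi_k(w)=0$; hence $\sum_{0\le k\le A\log n}\chi_k(w)=\sum_{k\ge0}\chi_k(w)$, which equals $1$ when $w\notin H_y$, so $\Phi_{n}^{\star}(w)=0$ (the required containment is trivial on $H_y$ itself, where $\log d(\cdot,H_y)=-\infty$). Thus $\Phi_{n}^{\star}$ is supported in $\{w:\,d(w,H_y)<e^{-\lfloor A\log n\rfloor}\}\subseteq\{w:\,\log d(w,H_y)\le-A\log n+1\}$, as claimed. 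Beyond the interpolation remark above I do not anticipate any real obstacle.
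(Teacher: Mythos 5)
Your proof is correct and follows essentially the same route as the paper: the identity and the bound $\norm{\Phi_{n,\xi}+\Phi_n^{\star}}_\infty\le 1$ from the partition properties, the H\"older estimate via $\Cc^0$--$\Cc^1$ interpolation giving $\norm{\chi_k}_{\Cc^\alpha}\lesssim e^{\alpha k}$, and the support statement from $\sum_{k\ge 0}\chi_k=1$ off $H_y$. The only cosmetic differences are that you sum the geometric series $\sum_{0\le k\le A\log n}e^{\alpha k}\lesssim n^{\alpha A}$ where the paper instead uses that at most two $\chi_k$ are nonzero at any point, and you derive the interpolation inequality by hand rather than citing it; both yield the same bound $C\,n^{\alpha A}$.
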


\begin{proof}

Identity \eqref{eq:psi_xi+psi_T} follows directly from the definition of $\Phi_{n,\xi}$ and $ \Phi_{n}^{\star}$.   Also from the definition,   we have  $|\Phi_{n,\xi} + \Phi_{n}^{\star} | \leq \Phi_{n,0} + \Phi_{n}^{\star}$ and the last function is identically equal to $1$ by  \eqref{eq:psi_xi+psi_T}.  It follows that $\norm{\Phi_{n,\xi} + \Phi_{n}^{\star}}_\infty \leq 1$.

 For the first inequality in \eqref{eq:norm-Phi}, notice that $\big \| e^{i \xi{k \over \sqrt n}}\chi_k \big\|_{\Cc^\alpha} = \norm{\chi_k}_{\Cc^\alpha}$. From Lemma \ref{lemma:partition-of-unity}-(4), the fact that $\norm{\chi_k}_{\Cc^0} \leq 1$ and the interpolation inequality $\norm{\, \cdot \,}_{\Cc^\alpha} \leq c_\alpha \norm{\, \cdot \,}_{\Cc^0}^{1-\alpha} \, \norm{\, \cdot \,}_{\Cc^1}^\alpha$ (see \cite[p. 202]{triebel}), it follows that $\norm{\chi_k}_{\Cc^\alpha} \leq 12 c_\alpha e^{\alpha k}$.  The last inequality can also be checked by a direct computation. Then, the first inequality in \eqref{eq:norm-Phi} follows from the definition of $\Phi_{n,\xi}$ and the fact that at most two $\chi_k$'s are non-zero simultaneously. The second inequality in \eqref{eq:norm-Phi} follows from the first one and the identity  $\Phi_{n,0} + \Phi_{n}^{\star} = \mathbf 1$,  after increasing the value of $C$ if necessary. 

In order to prove the last assertion, observe that, over $\P^{d-1} \setminus H_y$, one has $\Phi_{n}^{\star} = \sum_{k > A\log n} \chi_k$ by Lemma \ref{lemma:partition-of-unity}-(3). Since $\chi_k$ is supported by $\Tc_k$, the conclusion follows. This finishes the proof of the lemma.
\end{proof}

	Let $$H(b): =\frac{1}{\sqrt{2 \pi}  \,\varrho} \int_{-\infty}^b e^{-\frac{s^2}{2 \varrho^2}} \, \diff s$$ be the c.d.f.\ of the normal distribution $\cali N(0;\varrho^2)$. In the notation of Lemma \ref{lemma:BE-feller}, we have $h(u): =\frac{1}{\sqrt{2 \pi}  \,\varrho}   e^{-\frac{u^2}{2 \varrho^2}}$ and $\widehat h(\xi) =  e^{-{\varrho^2 \xi^2\over 2}}$. Let $\xi_0$ be the constant in Lemma \ref{lemma:lambda-estimates}. 
  \medskip

\begin{lemma} \label{lemma:F_n-estimate}
Let $F_n$ and $H$ be as above. Then, $\big|F_n(b)-H(b)\big|\lesssim 1/\sqrt n$ for $|b|\geq \xi_0 \sqrt n$.
\end{lemma}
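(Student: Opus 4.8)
The plan is to use that $F_n$ is a genuine cumulative distribution function, so that for $|b|$ large it is pinned near $0$ or near $1$, and to compare with the corresponding tails of the Gaussian $H$, the discrepancy being controlled by the large deviation estimate for the norm cocycle. The starting observation is that, by Lemma~\ref{lemma:partition-of-unity}(3) together with the definition of $\Phi_{n}^{\star}$, one has the pointwise identity $\sum_{0\le k\le A\log n}\chi_k+\Phi_{n}^{\star}=\mathbf 1$ on all of $\P^{d-1}$ (the $\chi_k$ vanish on $H_y$, where $\Phi_{n}^{\star}\equiv 1$). Hence, removing the indicator functions in the definition of $F_n(b)$ gives the clean complementary formula
$$1-F_n(b)=\sum_{0\le k\le A\log n}\mathbf E\Big(\mathbf 1_{\frac{\sigma(S_n,x)-n\gamma-k}{\sqrt n}>b}\,\chi_k(S_nx)\Big)+\mathbf E\Big(\mathbf 1_{\frac{\sigma(S_n,x)-n\gamma}{\sqrt n}>b}\,\Phi_{n}^{\star}(S_nx)\Big),$$
and in particular $0\le F_n(b)\le 1$.

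First consider $b\ge \xi_0\sqrt n$. In each summand above the indicator event forces $\sigma(S_n,x)-n\gamma>b\sqrt n+k\ge \xi_0 n$, and similarly for the $\Phi_{n}^{\star}$-term; therefore, bounding each indicator by $\mathbf 1_{\{\sigma(S_n,x)-n\gamma>\xi_0 n\}}$ and using $\sum_k\chi_k+\Phi_{n}^{\star}=\mathbf 1$, we get $0\le 1-F_n(b)\le \mathbf P\big(|\sigma(S_n,x)-n\gamma|\ge \xi_0 n\big)\le e^{-cn}$ for $n$ large, by Proposition~\ref{prop:BQLDT} applied with $\varepsilon=\xi_0$. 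On the Gaussian side, the standard tail bound gives $1-H(b)=\frac{1}{\sqrt{2\pi}\,\varrho}\int_b^\infty e^{-s^2/2\varrho^2}\,\diff s\lesssim e^{-\xi_0^2 n/2\varrho^2}$, which is also $\lesssim 1/\sqrt n$. Combining, $|F_n(b)-H(b)|\le (1-F_n(b))+(1-H(b))\lesssim 1/\sqrt n$.

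The case $b\le -\xi_0\sqrt n$ is symmetric, working with $F_n(b)$ directly. Every indicator event appearing in $F_n(b)$ forces $\sigma(S_n,x)-n\gamma\le b\sqrt n+k\le -\xi_0 n+A\log n\le -\tfrac{\xi_0}{2}n$ once $n$ is large enough that $A\log n\le \tfrac{\xi_0}{2}n$ (this is where the shift by $k\le A\log n$ is seen to be negligible against the ballistic term, and is the only point requiring a little care), and likewise for the $\Phi_{n}^{\star}$-term. Hence, again using $\sum_k\chi_k+\Phi_{n}^{\star}=\mathbf 1$, we obtain $0\le F_n(b)\le \mathbf P\big(|\sigma(S_n,x)-n\gamma|\ge \tfrac{\xi_0}{2}n\big)\le e^{-c'n}$ for $n$ large, by Proposition~\ref{prop:BQLDT} with $\varepsilon=\xi_0/2$, while $0\le H(b)\lesssim e^{-\xi_0^2 n/2\varrho^2}$; thus $|F_n(b)-H(b)|\lesssim 1/\sqrt n$ here too. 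Finally, the finitely many values of $n$ below the thresholds furnished by Proposition~\ref{prop:BQLDT} are absorbed into the implied constant, since $0\le F_n,H\le 1$. There is no serious obstacle; the proof is just the large deviation estimate for $\sigma(S_n,x)$ combined with the Gaussian tail bound.
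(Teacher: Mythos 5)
Your proof is correct and follows essentially the same route as the paper: the large deviation estimate of Proposition \ref{prop:BQLDT} for the norm cocycle makes $F_n$ (resp.\ $1-F_n$) exponentially small beyond $-\xi_0\sqrt n$ (resp.\ $\xi_0\sqrt n$), and the Gaussian tail bound handles $H$. The only differences are cosmetic: the paper bounds the $\Phi_{n}^{\star}$ term by $\E\big(\Phi_{n}^{\star}(S_n x)\big)\lesssim 1/\sqrt n$ and uses monotonicity of $F_n$, treating the right tail by symmetry, whereas you keep the indicator on every term (using $\sum_k\chi_k+\Phi_{n}^{\star}=\mathbf 1$ and $\Phi_{n}^{\star}\geq 0$) and obtain a slightly sharper, exponentially small bound on both tails.
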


\begin{proof}
We only consider the case of $b\leq -\xi_0\sqrt n$. The case $b \geq \xi_0\sqrt n$ can be treated similarly using $1-F_n$ and $1-H$ instead of $F_n$ and $H$. We can also assume that $n$ is large enough. Clearly, $H(b)\lesssim 1/\sqrt n$ for $b\leq -\xi_0\sqrt n$,  so it is enough to bound $F_n(b)$.  

 For $0\leq k\leq A\log n$, we have
\begin{align*}
\mathbf P\Big(  {\sigma(S_n,x) - n \gamma - k \over \sqrt n}\leq -\xi_0\sqrt n  \Big)&=\mathbf P\Big(\sigma(S_n,x) - n \gamma  \leq -\xi_0 n +k  \Big)\\
&\leq \mathbf P\Big(\sigma(S_n,x) - n \gamma  \leq  -  \xi_0 n / 2\Big),
\end{align*}
since $n$ is large. By Proposition \ref{prop:BQLDT} applied with $\ep= \xi_0 / 2$, there exists a constant $c>0$, independent of $n$, such that the last quantity is bounded by $e^{-cn}$.

Using the definition of $F_n$ and the fact that $\E \big(\Phi_{n}^{\star}(S_n x) \big)  \lesssim 1 / \sqrt n$ (see the proof of Lemma \ref{lemma:Ln-Fn}),  it follows that $$ F_n (-\xi_0\sqrt n)\lesssim \sum_{0\leq k\leq A\log n}e^{-cn} + 1/\sqrt n  \lesssim (A \log n) e^{-cn} + 1/\sqrt n \lesssim 1/\sqrt n.$$ As $F_n(b)$ is non-decreasing in $b$, one gets that $F_n(b) \lesssim 1 / \sqrt n$ for all $b\leq -\xi_0\sqrt n$. The lemma follows.
\end{proof}

Lemmas \ref{lemma:char-function-Fn} and \ref{lemma:F_n-estimate}  imply that $F_n$ satisfies the conditions of Corollary \ref{cor:BE-feller} with $\delta_n:=(\xi_0 \sqrt n)^{-1/2}$.  Let $\kappa > 1$ be the constant appearing in that corollary. For simplicity, by taking a smaller $\xi_0$ is necessary, one can assume that $2\kappa \xi_0 \leq 1$.  Then,  Corollary \ref{cor:BE-feller} gives that
\begin{equation} \label{eq:BE-main-estimate}
\sup_{b \in\R}\big|F_n(b)-H(b)\big|\leq  {1\over \pi} \sup_{|b|\leq \sqrt n}    \Big|  \int_{-\xi_0\sqrt n}^{\xi_0\sqrt n} {\Theta_b(\xi) \over \xi}    \,\diff \xi  \Big|+{C\over \sqrt n},
\end{equation}
where $C>0$ is a constant independent of $n$ and 
$$\Theta_b(\xi):=e^{ib\xi}\big( \phi_{F_n}(\xi)- \widehat {h}(\xi) \big)\widehat{\vartheta_{\delta_n}}(\xi).$$

We now estimate the integral in the right hand side of \eqref{eq:BE-main-estimate}.  Define
$$\widetilde h_n(\xi):=  \big(\oN_0 \Phi_{n,\xi} \big)  e^{-{\varrho^2 \xi^2\over 2}} +    \big(\oN_0 \Phi_{n}^{\star} \big)    e^{-{\varrho^2 \xi^2\over 2}} =  e^{-{\varrho^2 \xi^2\over 2}}  \oN_0 (  \Phi_{n,\xi} +  \Phi_{n}^{\star} ).$$ In light of Lemma \ref{lemma:char-function-Fn}, we will use it to approximate $\phi_{F_n}$ (see Proposition \ref{prop:spectral-decomp} and Lemma \ref{lemma:lambda-estimates}). Notice that  $\oN_0 (  \Phi_{n,\xi} +  \Phi_{n}^{\star} )$ is a constant independent of $x$.  Define also
 $$\Theta_b^{(1)}(\xi):= e^{ib\xi}\big( \phi_{F_n}(\xi)- \widetilde {h}_n(\xi) \big)\widehat{\vartheta_{\delta_n}}(\xi) \quad\text{and} 
 \quad \Theta_b^{(2)}(\xi):= e^{ib\xi}\big(  \widetilde {h}_n(\xi)-\widehat h(\xi) \big)\widehat{\vartheta_{\delta_n}}(\xi),$$
so that $\Theta_b =  \Theta_b^{(1)} + \Theta_b^{(2)}$. 
\medskip

\begin{lemma} \label{lemma:theta-1-bound} We have
$$\sup_{|b|\leq \sqrt n}    \Big|  \int_{-\xi_0\sqrt n}^{\xi_0\sqrt n} {\Theta_b^{(1)}(\xi) \over \xi}    \,\diff \xi  \Big| \lesssim {1\over \sqrt n}.$$
\end{lemma}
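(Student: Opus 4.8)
The plan is to recall the spectral decomposition $\oP_{-i\xi/\sqrt n} = \lambda_{-i\xi/\sqrt n}\oN_{-i\xi/\sqrt n} + \oQ_{-i\xi/\sqrt n}$ from Proposition \ref{prop:spectral-decomp}, valid for $|\xi|/\sqrt n \le \epsilon_0$, i.e.\ for $|\xi|\le\epsilon_0\sqrt n$, which covers our integration range once $\xi_0<\epsilon_0$. Applying it to the formula in Lemma \ref{lemma:char-function-Fn}, we split
$$\phi_{F_n}(\xi) = e^{i\xi\sqrt n\gamma}\lambda_{-i\xi/\sqrt n}^n\,\oN_{-i\xi/\sqrt n}(\Phi_{n,\xi}+\Phi_n^\star)(x) + e^{i\xi\sqrt n\gamma}\,\oQ_{-i\xi/\sqrt n}^n(\Phi_{n,\xi}+\Phi_n^\star)(x).$$
Comparing with $\widetilde h_n(\xi) = e^{-\varrho^2\xi^2/2}\,\oN_0(\Phi_{n,\xi}+\Phi_n^\star)$, we write $\phi_{F_n}(\xi)-\widetilde h_n(\xi)$ as a sum of three pieces: (i) $e^{i\xi\sqrt n\gamma}\lambda_{-i\xi/\sqrt n}^n$ times $(\oN_{-i\xi/\sqrt n}-\oN_0)(\Phi_{n,\xi}+\Phi_n^\star)(x)$; (ii) $\big(e^{i\xi\sqrt n\gamma}\lambda_{-i\xi/\sqrt n}^n - e^{-\varrho^2\xi^2/2}\big)$ times $\oN_0(\Phi_{n,\xi}+\Phi_n^\star)$; and (iii) the remainder term $e^{i\xi\sqrt n\gamma}\oQ_{-i\xi/\sqrt n}^n(\Phi_{n,\xi}+\Phi_n^\star)(x)$. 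Each is then multiplied by $\widehat{\vartheta_{\delta_n}}(\xi)/\xi$, integrated over $|\xi|\le\xi_0\sqrt n$, and estimated; the factor $e^{ib\xi}$ has modulus one and plays no role in the bounds, so the estimate is automatically uniform in $b$.

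For piece (iii), Proposition \ref{prop:spectral-decomp}(5) gives $\|\oQ_{-i\xi/\sqrt n}^n\|_{\Cc^\alpha}\lesssim \big(\tfrac{1+2\rho}{3}\big)^n$, which combined with $\|\Phi_{n,\xi}+\Phi_n^\star\|_{\Cc^\alpha}\lesssim n^{\alpha A}$ from Lemma \ref{lemma:norm-Phi} and the integrability of $1/\xi$ away from $0$ — here one uses that near $\xi=0$ the bound $|\oQ_{-i\xi/\sqrt n}^n\Phi|\lesssim(\tfrac{1+2\rho}{3})^n n^{\alpha A}$ is uniform, so there is no singularity, and $|\widehat{\vartheta_{\delta_n}}|\le 1$ with support in $[-\delta_n^{-2},\delta_n^{-2}]=[-\xi_0\sqrt n,\xi_0\sqrt n]$ — yields a bound $\lesssim (\tfrac{1+2\rho}{3})^n n^{\alpha A+1/2}$, which is exponentially small, hence $\lesssim 1/\sqrt n$. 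For piece (ii), we use the analyticity of $z\mapsto\oN_z$ to note $\|\oN_0(\Phi_{n,\xi}+\Phi_n^\star)\|$ is bounded (indeed $\le 1$ in sup norm since $\|\Phi_{n,\xi}+\Phi_n^\star\|_\infty\le 1$), and then invoke Lemma \ref{lemma:lambda-estimates}: on $|\xi|\le\sqrt[6]{n}$ we get $\big|e^{i\xi\sqrt n\gamma}\lambda_{-i\xi/\sqrt n}^n - e^{-\varrho^2\xi^2/2}\big|\le \tfrac{c}{\sqrt n}|\xi|^3 e^{-\varrho^2\xi^2/2}$, so dividing by $|\xi|$ leaves $\tfrac{c}{\sqrt n}|\xi|^2 e^{-\varrho^2\xi^2/2}$, integrable with integral $O(1/\sqrt n)$; on $\sqrt[6]{n}<|\xi|\le\xi_0\sqrt n$ the second estimate gives $\tfrac{c}{\sqrt n}e^{-\varrho^2\xi^2/4}$, again with integral $O(1/\sqrt n)$ (the conjugation by $\lambda_{-i\xi/\sqrt n}=\overline{\lambda_{i\xi/\sqrt n}}$ only conjugates the estimates, which are real).

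The main obstacle is piece (i), the term $(\oN_{-i\xi/\sqrt n}-\oN_0)(\Phi_{n,\xi}+\Phi_n^\star)(x)$. Here the dependence on $\xi$ is genuinely two-layered: $\oN_z-\oN_0$ is $O(|z|)=O(|\xi|/\sqrt n)$ as an operator on $\Cc^\alpha$ by analyticity of $z\mapsto\oN_z$, but it acts on the moving function $\Phi_{n,\xi}+\Phi_n^\star$ whose $\Cc^\alpha$ norm grows like $n^{\alpha A}$. Crucially, the choice $\alpha A\le 1/6$ makes $n^{\alpha A}\le n^{1/6}$, so after multiplying by $|\lambda_{-i\xi/\sqrt n}^n|\lesssim e^{-\varrho^2\xi^2/3}$ (first estimate of Lemma \ref{lemma:lambda-estimates}) and by $1/|\xi|$, and integrating, one bounds the contribution by $\tfrac{1}{\sqrt n}\cdot n^{\alpha A}\int e^{-\varrho^2\xi^2/3}\,\diff\xi \lesssim \tfrac{n^{1/6}}{\sqrt n}$, which is still $\lesssim 1/\sqrt n$ only if we gain an extra factor — and indeed we must be slightly more careful: near $\xi=0$ the factor $|\xi|$ from $\|\oN_z-\oN_0\|=O(|\xi|/\sqrt n)$ cancels the $1/|\xi|$, removing the singularity, and then the Gaussian decay from $|\lambda^n|$ localizes $\xi$ to $|\xi|\lesssim 1$, so the growing factor $n^{\alpha A}$ is what has to be absorbed; re-examining, the bound is $\tfrac{1}{\sqrt n}\int_{|\xi|\le\xi_0\sqrt n} n^{\alpha A} e^{-\varrho^2\xi^2/3}\,\diff\xi\lesssim \tfrac{n^{\alpha A}}{\sqrt n}$. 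To reach $1/\sqrt n$ one needs $\alpha A$ arbitrarily small rather than merely $\le 1/6$; the resolution (matching the later lemmas) is that we do not actually need $\oN_{-i\xi/\sqrt n}$ applied to the full $n^{\alpha A}$-sized function, but rather we exploit $\oN_0(\Phi_{n,\xi}+\Phi_n^\star) = \int(\Phi_{n,\xi}+\Phi_n^\star)\,\diff\nu$, which by Proposition \ref{prop:regularity} and the support of the $\chi_k$ near $H_y$ is bounded independently of $n$ — in fact $\sum_k|e^{i\xi k/\sqrt n}-1|\nu(\Tc_k)\lesssim \sum_k \min(1,|\xi|k/\sqrt n)e^{-\eta k}$ is uniformly bounded — and similarly the operator-norm argument is applied not to $\Phi_{n,\xi}+\Phi_n^\star$ but, via \eqref{eq:psi_xi+psi_T}, to the pieces $(e^{i\xi k/\sqrt n}-1)\chi_k$ individually, using $\|(\oN_z-\oN_0)\chi_k\|\lesssim |\xi|/\sqrt n\cdot e^{\alpha k}$ and the extra smallness $|e^{i\xi k/\sqrt n}-1|\lesssim |\xi|k/\sqrt n$ when needed, so that the sum over $0\le k\le A\log n$ telescopes to something $\lesssim |\xi|(\log n)^{O(1)}/\sqrt n$ or better, whose $\xi$-integral against $e^{-\varrho^2\xi^2/3}/|\xi|$ is $\lesssim 1/\sqrt n$ up to logarithmic factors — and the logarithm is killed because the Gaussian restricts $|\xi|\lesssim1$ where $e^{\alpha k}$ summed against $|e^{i\xi k/\sqrt n}-1|$ stays bounded. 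I expect the author handles this by splitting the $k$-sum at $k\sim\sqrt n/|\xi|$ and using $|e^{i\xi k/\sqrt n}-1|\le 2$ for large $k$ together with $\nu(\Tc_k)\lesssim e^{-\eta k}$; this is the technical heart of the argument.
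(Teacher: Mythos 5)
Your decomposition of $\phi_{F_n}-\widetilde h_n$ into the three pieces (i)--(iii) is exactly the paper's spectral splitting, and your treatment of piece (ii) (via $|\oN_0(\Phi_{n,\xi}+\Phi_n^\star)|\le 1$ and the two regimes of Lemma \ref{lemma:lambda-estimates}) is correct. The serious problem is piece (iii), the term $e^{i\xi\sqrt n\gamma}\oQ_{-i\xi/\sqrt n}^n(\Phi_{n,\xi}+\Phi_n^\star)(x)$. You claim that since this is uniformly $\lesssim\beta^n n^{\alpha A}$ ``there is no singularity''; that reasoning is false: a bounded numerator divided by $\xi$ is not integrable across $\xi=0$, so your bound $\beta^n n^{\alpha A+1/2}$ does not follow. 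What saves this term is that it \emph{vanishes} at $\xi=0$, because $\Phi_{n,0}+\Phi_n^\star=\mathbf 1$ (this is precisely the role of the auxiliary function $\Phi_n^\star$) and $\oQ_0\mathbf 1=0$; but vanishing alone is not enough, one needs a quantitative $O(|\xi|)$ bound near $0$. The paper obtains it by splitting further: the difference $\big(\oQ_{-i\xi/\sqrt n}^n-\oQ_0^n\big)(\Phi_{n,\xi}+\Phi_n^\star)$ is controlled by the Lipschitz estimate $\|\oQ_z^n-\oQ_0^n\|_{\Cc^\alpha}\lesssim|z|\,n\beta^n$ (telescoping sum plus Proposition \ref{prop:spectral-decomp}-(5)), and the remaining term $\oQ_0^n(\Phi_{n,\xi}+\Phi_n^\star)(x)$, which still depends on $\xi$ through the phases $e^{i\xi k/\sqrt n}$, is handled by the mean value theorem after bounding its $\xi$-derivative. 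Neither step appears in your proposal, so as written the $\oQ$-contribution is not proved.

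Piece (i) is also left incomplete. You correctly observe that the naive bound $\|\oN_{-i\xi/\sqrt n}-\oN_0\|\lesssim|\xi|/\sqrt n$ times $\|\Phi_{n,\xi}+\Phi_n^\star\|_{\Cc^\alpha}\lesssim n^{\alpha A}$ only gives $n^{\alpha A}/\sqrt n$, but your proposed repair conflates two different mechanisms: the regularity bound $\nu(\Tc_k)\lesssim e^{-k\eta}$ of Proposition \ref{prop:regularity} controls the scalars $\oN_0\chi_k$ and is what the paper uses for $\Theta_b^{(2)}$ in Lemma \ref{lemma:theta-2-bound}; it says nothing about $(\oN_{-i\xi/\sqrt n}-\oN_0)\chi_k$, for which only the operator norm times $\|\chi_k\|_{\Cc^\alpha}\lesssim e^{\alpha k}$ is available. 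The paper's actual fix is a case split at $|\xi|=\sqrt[6]n$: for $|\xi|\le\sqrt[6]n$ one shows via \eqref{eq:psi_xi+psi_T}, $|e^{i\xi k/\sqrt n}-1|\le|\xi|k/\sqrt n$ and $\alpha A\le 1/6$ that $\|\Phi_{n,\xi}+\Phi_n^\star\|_{\Cc^\alpha}\lesssim 1$, so the loss $n^{\alpha A}$ disappears; for $\sqrt[6]n<|\xi|\le\xi_0\sqrt n$ the factor $1/|\xi|\le n^{-1/6}$ absorbs $n^{\alpha A}\le n^{1/6}$, and $|\lambda_{-i\xi/\sqrt n}^n|\le e^{-\varrho^2\xi^2/3}$ makes the integral $O(1/\sqrt n)$. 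Your sketch gestures at the expansion \eqref{eq:psi_xi+psi_T} term by term, which can indeed be made to work uniformly in $\xi$ once the $k$-sum is carried out against the Gaussian weight, but you stop at ``I expect the author handles this by\dots'' with garbled intermediate bounds, so this part too falls short of a proof.
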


\begin{proof}
Using Lemma \ref{lemma:char-function-Fn} and the decomposition in Proposition \ref{prop:spectral-decomp}, we write
$$\Theta_b^{(1)}=\Lambda_1+\Lambda_2+\Lambda_3,$$
where 
$$ \Lambda_1(\xi):= e^{ib\xi}\Big(  e^{i\xi\sqrt n \gamma} \lambda_{-{i\xi\over \sqrt n}}^n \oN_{-{i\xi\over \sqrt n}} (\Phi_{n,\xi} +\Phi_{n}^{\star}  ) (x) -      e^{-{\varrho^2 \xi^2\over 2}}   \oN_0 (\Phi_{n,\xi} +\Phi_{n}^{\star}  )  (x)   \Big) \widehat{\vartheta_{\delta_n}}(\xi), $$ 
$$\Lambda_2(\xi):=e^{ib\xi}\Big(  e^{i\xi\sqrt n \gamma} \oQ_{-{i\xi\over \sqrt n}}^n  (\Phi_{n,\xi} +\Phi_{n}^{\star}  )(x) -e^{i\xi\sqrt n \gamma}\oQ_0^n  (\Phi_{n,\xi} +\Phi_{n}^{\star}  )(x)  \Big)\widehat{\vartheta_{\delta_n}}(\xi)$$
and
$$ \Lambda_3(\xi):=e^{ib\xi}e^{i\xi\sqrt n \gamma}\oQ_0^n  (\Phi_{n,\xi} +\Phi_{n}^{\star}  )(x)  \,\widehat{\vartheta_{\delta_n}}(\xi).$$

We will estimate the integral  of $\Lambda_j(\xi) / \xi$,  $j=1,2,3$, separately.  Notice that, from \eqref{eq:psi_xi+psi_T}, we have that $\Phi_{n,0} + \Phi_{n}^{\star} = \mathbf 1 $.   Together with the fact that $\lambda_0=1$, $\oN_0 \mathbf 1 = \mathbf 1$ and $\oQ_0 \mathbf 1 = 0$, we get that  $\Lambda_j(0) = 0$ for  $j=1,2,3$.  In particular, $\Lambda_j(\xi) / \xi$ is a smooth function of $\xi$ for $j=1,2,3$. We see here the role of the auxiliary function $\Phi_{n}^{\star}$.

\vskip 5pt

In order to estimate $\Lambda_2$ observe that, for $z$ small, the norm of the operator $\oQ^n_z - \oQ^n_0$ is bounded by a constant times $|z| n \beta^n$ for some $0<\beta<1$. This can be seen by writing the last difference as $\sum_{\ell=0}^{n-1}  \oQ_z^{n-\ell-1}(\oQ_z - \oQ_0)\oQ_0^\ell$, applying Proposition \ref{prop:spectral-decomp}-(5) and using the fact that $\norm{\oQ_z - \oQ_0}_{\Cc^\alpha} \lesssim |z|$ . Therefore, we have 
 $$  \Big|  \oQ_{-{i\xi\over \sqrt n}}^n  (\Phi_{n,\xi} +\Phi_{n}^{\star}  )(x) - \oQ_0^n (\Phi_{n,\xi} +\Phi_{n}^{\star}  )(x) \Big| \lesssim {|\xi|\over \sqrt n} n \beta^n \norm{ \Phi_{n,\xi} +\Phi_{n}^{\star}  }_{\Cc^\alpha}.$$
 Using Lemma \ref{lemma:norm-Phi}, this gives
 \begin{align*}
\Big|\int_{-\xi_0\sqrt n}^{\xi_0\sqrt n} {\Lambda_2(\xi) \over \xi}    \,\diff \xi\Big|
&\leq\int_{-\xi_0\sqrt n}^{\xi_0\sqrt n} {1\over |\xi|} \cdot \Big| \oQ_{-{i\xi\over \sqrt n}}^n  (\Phi_{n,\xi} +\Phi_{n}^{\star}  )(x) -  \oQ_0^n (\Phi_{n,\xi} +\Phi_{n}^{\star}  )(x) \Big|\,\diff\xi \\
&\lesssim     \int_{-\xi_0\sqrt n}^{\xi_0\sqrt n} {1\over |\xi|} \cdot  |\xi| \sqrt n \beta^n \norm{ \Phi_{n,\xi} +\Phi_{n}^{\star}  }_{\Cc^\alpha} \,\diff\xi \lesssim \beta^n n^{\alpha A + 1} \lesssim {1\over \sqrt n}.     
\end{align*}

We now estimate $\Lambda_3$ using its derivative $\Lambda'_3$ . Recall that $\norm{\widehat{\vartheta_{\delta_n}}}_{\Cc^1}\lesssim 1,|b|\leq \sqrt n$ and  $\big|\oQ_0^n  (\Phi_{n,\xi} +\Phi_{n}^{\star}  )(x) \big|\lesssim \beta^n\norm{ \Phi_{n,\xi} +\Phi_{n}^{\star}  }_{\Cc^\alpha}$ where $0<\beta<1$ is as before. A direct computation using the definition of $\Phi_{n,\xi} $ gives
\begin{align*}
\sup_{|\xi|\leq \sqrt n} | \Lambda_3'(\xi)| &\leq  \Big|b \oQ_0^n  (\Phi_{n,\xi} +\Phi_{n}^{\star}  )(x)\Big|+    \Big|\sqrt n \gamma \oQ_0^n  (\Phi_{n,\xi} +\Phi_{n}^{\star}  )(x)\Big|  \\ & \quad \quad + \sum_{1 \leq k \leq A \log n} \Big|{k\over \sqrt n} e^{i \xi \frac{k}{\sqrt n}}\oQ_0^n \chi_k (x)    \Big|  + \Big|\oQ_0^n  (\Phi_{n,\xi} +\Phi_{n}^{\star}  )(x)\Big| \\ &\lesssim \sqrt n  \beta^n n^{\alpha A} + \frac{(\log n)^2}{\sqrt n} \beta^n n^{\alpha A} +   \beta^n n^{\alpha A}  \lesssim (1+\sqrt n)  \beta^n n^{\alpha A},
\end{align*}
where we have used that $\norm{\chi_k}_{\Cc^\alpha} \lesssim e^{\alpha k}$ and $\norm{ \Phi_{n,\xi} +\Phi_{n}^{\star}  }_{\Cc^\alpha} \lesssim n^{\alpha A}$, see Lemma \ref{lemma:norm-Phi}.

Applying the mean value theorem  over the interval between $0$ and $\xi$ yields
 \begin{align*}
\Big|\int_{-\xi_0\sqrt n}^{\xi_0\sqrt n} {\Lambda_3(\xi) \over \xi}    \,\diff \xi\Big|\leq 2\xi_0\sqrt n\sup_{|\xi|\leq \xi_0\sqrt n} | \Lambda_3'(\xi)|\lesssim \sqrt n (1+\sqrt n) n^{\alpha A} \beta^n \lesssim {1 \over \sqrt n}.
\end{align*}
\vskip 5pt

It remains to estimate the term involving $\Lambda_1$. We have
$$ \Big|\int_{-\xi_0\sqrt n}^{\xi_0\sqrt n} {\Lambda_1(\xi) \over \xi}   \diff \xi\Big| \leq \int_{-\xi_0\sqrt n}^{\xi_0\sqrt n} {1\over |\xi|}\cdot \Big|  e^{i\xi\sqrt n \gamma} \lambda_{-{i\xi\over \sqrt n}}^n \oN_{-{i\xi\over \sqrt n}} (\Phi_{n,\xi} +\Phi_{n}^{\star}  ) (x) -      e^{-{\varrho^2 \xi^2\over 2}}   \oN_0 (\Phi_{n,\xi} +\Phi_{n}^{\star}  )(x)    \Big|   \diff \xi.$$
We split the last integral into two integrals using
$$ \Gamma_1(\xi):=   e^{i\xi\sqrt n \gamma} \lambda_{-{i\xi\over \sqrt n}}^n \oN_{-{i\xi\over \sqrt n}}  (\Phi_{n,\xi} +\Phi_{n}^{\star}  ) (x) -      e^{i\xi\sqrt n \gamma} \lambda_{-{i\xi\over \sqrt n}}^n  \oN_0 (\Phi_{n,\xi} +\Phi_{n}^{\star}  )(x)   $$
and
$$  \Gamma_2(\xi):=   e^{i\xi\sqrt n \gamma} \lambda_{-{i\xi\over \sqrt n}}^n  \oN_0 (\Phi_{n,\xi} +\Phi_{n}^{\star}  ) (x) -      e^{-{\varrho^2 \xi^2\over 2}}   \oN_0 (\Phi_{n,\xi} +\Phi_{n}^{\star}  ) (x) .   $$
\vskip 5pt

\noindent\textbf{Case 1.}  $\sqrt[6] n<|\xi|\leq \xi_0\sqrt n$.    In this case, by Lemma \ref{lemma:lambda-estimates} we have
\begin{equation} \label{eq:case1-lambda}
\big|\lambda_{-{i\xi\over \sqrt n}}^n\big|\leq e^{-{\varrho^2\xi^2\over 3}} \quad \text{and}\quad \Big| e^{i\xi\sqrt n \gamma} \lambda_{-{i\xi\over \sqrt n}}^n-e ^{-{\varrho^2\xi^2\over 2}}  \Big|\lesssim {1\over \sqrt n}e^{-{\varrho^2\xi^2\over 4}}.
\end{equation}

From the analyticity of $\xi\mapsto \oN_{i\xi}$ (cf. Proposition \ref{prop:spectral-decomp}), Lemma \ref{lemma:norm-Phi} and the fact that $\alpha A\leq 1/6$, one has 
$$\Big\|\big(\oN_{-{i\xi\over \sqrt n}}-\oN_0 \big)  (\Phi_{n,\xi} +\Phi_{n}^{\star}  ) \Big\|_\infty \lesssim {|\xi| \over \sqrt n} \norm{ \Phi_{n,\xi} +\Phi_{n}^{\star}  }_{\Cc^\alpha}\lesssim {|\xi| \over \sqrt n}n^{\alpha A} \leq  {|\xi| \over \sqrt n}\sqrt[6] n. $$
  Hence, using \eqref{eq:case1-lambda}, we get
$$\int_{\sqrt[6] n<|\xi|\leq \xi_0\sqrt n} {1\over |\xi|}\cdot \big| \Gamma_1(\xi) \big|  \,\diff \xi \lesssim \int_{- \infty}^{\infty} {1\over \sqrt[6] n} \cdot e^{-{\varrho^2\xi^2\over 3}} {|\xi| \over \sqrt n}\sqrt[6] n  \,\diff \xi  \lesssim   { 1\over \sqrt n}.$$

Observe that $ \big| \Phi_{n,\xi} +\Phi_{n}^{\star} \big| \leq \Phi_{n,0} +\Phi_{n}^{\star} = 1$, so  $\big|\oN_0  (\Phi_{n,\xi} +\Phi_{n}^{\star}  )\big| \leq 1$. Therefore, using \eqref{eq:case1-lambda}, we obtain
$$\int_{\sqrt[6] n<|\xi|\leq \xi_0\sqrt n} {1\over |\xi|}\cdot \big| \Gamma_2(\xi) \big|  \,\diff \xi \lesssim \int_{-\xi_0\sqrt n}^{\xi_0\sqrt n} {1\over \sqrt[6] n} \cdot{1\over \sqrt n} e^{-{\varrho^2\xi^2\over 4}}   \,\diff \xi  \lesssim { 1\over \sqrt n}.$$

The bound for $\Lambda_1$ follows in this case.
\vskip 5pt

\noindent\textbf{Case 2.} $|\xi|\leq \sqrt[6] n$.  In this case,  Lemma \ref{lemma:lambda-estimates} gives that
\begin{equation} \label{eq:case2-lambda}
\big|\lambda_{-{i\xi\over \sqrt n}}^n\big|\leq e^{-{\varrho^2\xi^2\over 3}} \quad \text{and}\quad \Big| e^{i\xi\sqrt n \gamma} \lambda_{-{i\xi\over \sqrt n}}^n-e ^{-{\varrho^2\xi^2\over 2}}  \Big|\lesssim {1\over \sqrt n}|\xi|^3e^{-{\varrho^2\xi^2\over 2}}.
\end{equation}

From \eqref{eq:psi_xi+psi_T} it follows that $\norm{\Phi_{n,\xi} +\Phi_{n}^{\star}}_{\Cc^\alpha}$ is bounded by
$$ 1 + \sum_{0\leq k\leq A\log n} \big|   e^{i \xi{k \over \sqrt n}}-1  \big|\cdot \norm{\chi_k}_{\Cc^\alpha}\lesssim 1 +  \sum_{0\leq k\leq A\log n}|\xi|{k \over \sqrt n} e^{\alpha k}  \lesssim 1 +  {\sqrt[6] n (\log n)^2 n^{\alpha A}\over \sqrt n} \lesssim 1 , $$
where in the last step we have used that $\alpha A\leq 1/6$. It follows from the analyticity of $\xi\mapsto\oN_{i\xi}$ that
$$  \Big\|\big(\oN_{-{i\xi\over \sqrt n}} -\oN_0\big)(\Phi_{n,\xi} +\Phi_{n}^{\star}  )  \Big\|_\infty  \lesssim {|\xi|\over \sqrt n}  .$$
We conclude, using \eqref{eq:case2-lambda}, that 
$$\int_{|\xi|\leq\sqrt[6] n} {1\over |\xi|}\cdot \big| \Gamma_1(\xi) \big|  \,\diff \xi \lesssim \int_{|\xi|\leq\sqrt[6] n} {1\over |\xi|} \cdot  e^{-{\varrho^2\xi^2\over 3}} {|\xi|\over \sqrt n}   \,\diff \xi \lesssim {1 \over \sqrt n}. $$

For $\Gamma_2$, using that $\big|\oN_0  (\Phi_{n,\xi} +\Phi_{n}^{\star}  )\big| \leq 1$ as before together with  \eqref{eq:case2-lambda}, gives
$$\int_{|\xi|\leq\sqrt[6] n} {1\over |\xi|}\cdot \big| \Gamma_2(\xi) \big|  \,\diff \xi \lesssim \int_{|\xi|\leq\sqrt[6] n} {1\over |\xi|}\cdot   {1\over \sqrt n}|\xi|^3e^{-{\varrho^2\xi^2\over 2}}   \,\diff \xi\lesssim {1 \over \sqrt n}.$$

Together with Case 1, we deduce that 
$$\Big|\int_{-\xi_0\sqrt n}^{\xi_0\sqrt n} {\Lambda_1(\xi) \over \xi}    \,\diff \xi\Big|  \lesssim {1 \over \sqrt n},$$
which ends the proof of the lemma.
\end{proof}

\begin{lemma} \label{lemma:theta-2-bound}
We have
$$\sup_{|b|\leq  \sqrt n}    \Big|  \int_{-\xi_0\sqrt n}^{\xi_0\sqrt n} {\Theta_b^{(2)}(\xi) \over \xi}   \,\diff \xi  \Big| \lesssim { 1\over \sqrt n}.$$
\end{lemma}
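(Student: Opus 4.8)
The plan is to bound $\Theta_b^{(2)}(\xi)/\xi = e^{ib\xi}\bigl(\widetilde h_n(\xi)-\widehat h(\xi)\bigr)\widehat{\vartheta_{\delta_n}}(\xi)/\xi$ by exploiting the explicit formula $\widetilde h_n(\xi) = e^{-\varrho^2\xi^2/2}\,\oN_0(\Phi_{n,\xi}+\Phi_{n}^{\star})$ and $\widehat h(\xi) = e^{-\varrho^2\xi^2/2}$. Since $\oN_0$ is the integration-against-$\nu$ functional and, by \eqref{eq:psi_xi+psi_T}, $\Phi_{n,\xi}+\Phi_{n}^{\star} = \mathbf 1 + \sum_{0\leq k\leq A\log n}(e^{i\xi k/\sqrt n}-1)\chi_k$, we get
\begin{equation*}
\widetilde h_n(\xi)-\widehat h(\xi) = e^{-\varrho^2\xi^2/2}\sum_{0\leq k\leq A\log n}\bigl(e^{i\xi k/\sqrt n}-1\bigr)\,\nu(\chi_k).
\end{equation*}
In particular this difference vanishes at $\xi=0$, so $\Theta_b^{(2)}(\xi)/\xi$ is smooth and the singularity at the origin is harmless. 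First I would use $|e^{i\xi k/\sqrt n}-1|\leq |\xi| k/\sqrt n$ together with $|\widehat{\vartheta_{\delta_n}}|\leq 1$ to obtain the pointwise bound $|\Theta_b^{(2)}(\xi)/\xi| \lesssim e^{-\varrho^2\xi^2/2}\,\frac{1}{\sqrt n}\sum_{0\leq k\leq A\log n} k\,\nu(\chi_k)$.

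The crux is then to show $\sum_{k\geq 0} k\,\nu(\chi_k)$ is finite (indeed bounded independently of $n$). Here I would invoke the regularity of the stationary measure, Proposition \ref{prop:regularity}: since $\chi_k$ is supported in $\Tc_k \subset \B(H_y,e^{-k+1})$, one has $\nu(\chi_k)\leq \nu\bigl(\B(H_y,e^{-k+1})\bigr)\leq C(e^{-k+1})^\eta = C e^\eta e^{-\eta k}$, so $\sum_{k\geq 0} k\,\nu(\chi_k)\leq C e^\eta\sum_{k\geq 0} k\,e^{-\eta k} =: C' < \infty$, a constant depending only on $\mu$. Plugging this back in gives $|\Theta_b^{(2)}(\xi)/\xi| \lesssim \frac{1}{\sqrt n}\,e^{-\varrho^2\xi^2/2}$, uniformly in $b$ and in $n$.

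Finally I would integrate this bound over $\xi\in(-\xi_0\sqrt n,\xi_0\sqrt n)$; since $\int_{\R} e^{-\varrho^2\xi^2/2}\,\diff\xi = \sqrt{2\pi}/\varrho < \infty$, we conclude
\begin{equation*}
\sup_{|b|\leq\sqrt n}\Bigl|\int_{-\xi_0\sqrt n}^{\xi_0\sqrt n}\frac{\Theta_b^{(2)}(\xi)}{\xi}\,\diff\xi\Bigr| \lesssim \frac{1}{\sqrt n}\int_{\R} e^{-\varrho^2\xi^2/2}\,\diff\xi \lesssim \frac{1}{\sqrt n},
\end{equation*}
as desired. I do not expect any serious obstacle here: unlike Lemma \ref{lemma:theta-1-bound}, where the operator-theoretic perturbation estimates and the auxiliary function $\Phi_n^{\star}$ play a delicate role, this lemma reduces entirely to the elementary inequality $|e^{it}-1|\leq |t|$ and the exponential decay $\nu(\chi_k)\lesssim e^{-\eta k}$ coming from Proposition \ref{prop:regularity}. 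The only point requiring a little care is making sure the constant $C'$ absorbing $\sum k\,e^{-\eta k}$ is genuinely independent of $n$ — which it is, since $\eta$ depends only on $\mu$ — and noting that extending the $k$-sum from $\{0,\dots,\lfloor A\log n\rfloor\}$ to all of $\N$ only increases the bound.
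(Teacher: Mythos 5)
Your argument is correct and follows essentially the same route as the paper: expand $\widetilde h_n(\xi)-\widehat h(\xi)$ via \eqref{eq:psi_xi+psi_T} and $\oN_0\mathbf 1 = \mathbf 1$, bound $|e^{i\xi k/\sqrt n}-1|\leq |\xi|k/\sqrt n$, use Proposition \ref{prop:regularity} to get $\oN_0\chi_k\lesssim e^{-\eta k}$, and integrate the resulting Gaussian bound. No gaps; this matches the paper's proof.
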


\begin{proof}
Recall that $\chi_k$ is bounded by $1$ and is supported by $ \Tc_k \subset \B(H_y,e^{-k+1})$. Therefore,
$$\oN_0 \chi_k = \int_{\P^{d-1}} \chi_k \, \diff \nu \leq \nu \big( \B(H_y,e^{-k+1}) \big)  \lesssim e^{-k\eta},$$
where in the last step we have used Proposition \ref{prop:regularity}.

Recall that  $\widehat h(\xi) =  e^{-{\varrho^2 \xi^2\over 2}}$ and $\widetilde h_n(\xi)=  e^{-{\varrho^2 \xi^2\over 2}} \oN_0 (\Phi_{n,\xi} +  \Phi_{n}^{\star})$. Using \eqref{eq:psi_xi+psi_T} and $\oN_0 \mathbf 1 = \mathbf 1$, we get
\begin{align*}
\Theta_b^{(2)}(\xi) &= e^{ib \xi} e^{-{\varrho^2 \xi^2\over 2}} \big( \oN_0 (\Phi_{n,\xi} +  \Phi_{n}^{\star}) - 1 \big) \widehat{\vartheta_{\delta_n}}(\xi) \\ &=  e^{ib \xi} e^{-{\varrho^2 \xi^2\over 2}} \sum_{0\leq k\leq A\log n} \big(  e^{i \xi{k \over \sqrt n}}-1 \big) \big( \oN_0 \chi_k \big) \cdot \widehat{\vartheta_{\delta_n}}(\xi).
\end{align*}
As $\norm{\widehat{\vartheta_{\delta_n}}}_\infty \leq 1$, we obtain 
\begin{align*}
\big|\Theta_b^{(2)}(\xi)\big| \leq  e^{-{\varrho^2 \xi^2\over 2}} \sum_{0\leq k\leq A\log n} \big|  e^{i \xi{k \over \sqrt n}}-1\big| \big( \oN_0\chi_k \big)    \lesssim e^{-{\varrho^2 \xi^2\over 2}}\sum_{k \geq 0} | \xi| {k \over \sqrt n} e^{-k\eta}\lesssim e^{-{\varrho^2 \xi^2\over 2}} {|\xi|\over \sqrt n},
\end{align*}
where the constants involved do not depend on $b$. Therefore,
$$   \int_{-\xi_0\sqrt n}^{\xi_0\sqrt n} \Big|{\Theta_b^{(2)}(\xi)\over \xi} \Big|  \,\diff \xi   \lesssim  \int_{-\xi_0\sqrt n}^{\xi_0\sqrt n}  {1\over |\xi|} \cdot e^{-{\varrho^2 \xi^2\over 2}} {|\xi|\over \sqrt n}  \,\diff \xi \lesssim {1 \over \sqrt n},$$
thus proving the lemma.
\end{proof}

Gathering the above estimates, we can finish the proof.

\begin{proof}[End of proof of Theorem \ref{thm:BE-coeff}] 
Recall that our goal is to prove \eqref{goal-1-varphi}. Estimate \eqref{eq:BE-main-estimate} together with Lemmas \ref{lemma:theta-1-bound} and \ref{lemma:theta-2-bound} give that
$\big|F_n(b)-H(b)\big| \leq C'/ \sqrt n$ for all $b \in \R$, where $C' > 0$ is a constant. Recall that $H(b): =\frac{1}{\sqrt{2 \pi}  \,\varrho} \int_{-\infty}^b e^{-\frac{s^2}{2 \varrho^2}} \, \diff s$. Coupling the last estimate with Lemma \ref{lemma:Ln-Fn} and the easy fact that $\sup_{b \in \R} \big|H(b)-H(b \pm 1/ \sqrt n)\big| \lesssim 1/ \sqrt n$ gives that $ \big|\oL_n(b)-H(b)\big| \leq C'' /  \sqrt n$ for some constant $C'' > 0$. Therefore,  \eqref{goal-1-varphi} holds. Observe that all of our estimates are uniform in $x \in \P^{d-1}$ and $y\in (\P^{d-1})^*$. The proof of the theorem is complete.
\end{proof}

\section{Local limit theorem for coefficients} \label{sec:LLT}

This section is devoted to the proof of  Theorem \ref{thm:LLT-coeff}.  As in the previous section, we fix $x =[v]\in \P^{d-1}$ and $y=[f]\in (\P^{d-1})^*$.   Fix also  $-\infty<a<b<\infty$ and define
\begin{equation*}
\oA_n(t) := \sqrt{n}  \, \mathbf P \Big(  t+ \log{ |\lp f, S_n v \rp | \over \norm{f} \norm{v}} - n \gamma\in [ a, b] \Big) .
\end{equation*}
Our goal is to prove that
\begin{equation} \label{eq:LLT-main-limit}
\lim_{n\to \infty}\sup_{t\in\R} \Big|\, \oA_n(t)  -   e^{-\frac{t^2}{2 \varrho^2 n}} {b-a\over \sqrt{2 \pi}\,\varrho} \, \Big| =0.
\end{equation}

Our strategy is similar to the one employed in the proof of Theorem \ref{thm:BE-coeff}, that is, to replace $\log{ |\lp f, S_n v \rp | \over \norm{f} \norm{v}} $ by $\sigma(S_n,x) +\log d(S_n x, H_y)$ (see  \eqref{eq:coeff-split}) and use the perturbed Markov operator and large deviation estimates to handle $\sigma(S_n,x)$  and $\log d(S_n x, H_y)$. Here, we are dealing with ``local'' probabilities for $\sigma(S_n,x)$, so the analysis is more involved. In particular, we need to use finer approximation results, such as the one in Lemma \ref{lemma:conv-fourier-approx} and properties of the operator $\oP_{i\xi}$ for large values of $\xi \in \R$, as in Proposition  \ref{prop:spec-Pxi}.

\medskip

 Let $0<\zeta \leq 1$ be a constant. For integers $k \geq 0$ introduce
\begin{align*}
\Tc_k^\zeta := \big\{ w \in \P^{d-1} :\, e^{-(k+1)\zeta} < d(w,H_y) < e^{-(k-1)\zeta} \big\} = \B(H_y,e^{-(k-1) \zeta}) \setminus \overline{\B(H_y,e^{-(k+1)\zeta})}.
\end{align*}

We have the following version of Lemma \ref{lemma:partition-of-unity}. We'll use the same notation to denote slightly different functions. This shouldn't cause confusion. The functions from Lemma \ref{lemma:partition-of-unity} correspond to the particular case $\zeta =1$.

\begin{lemma} \label{lemma:partition-of-unity-2}
Let $0<\zeta \leq 1$. There exist non-negative smooth functions $\chi_k$ on $\P^{d-1}$, $k \geq 0$, such that
\begin{enumerate}
\item $\chi_k$ is supported by $\Tc_k^\zeta$;
\item If $w \in \P^{d-1} \setminus H_y$,  then  $\chi_k(w) \neq 0$ for at most two values of $k$;
\item $\sum_{k\geq 0}  \chi_k=1$ on $\P^{d-1} \setminus H_y$; 
\item $\norm{\chi_k}_{\Cc^1}\leq 12  \zeta^{-1} e^{k\zeta}$.
\end{enumerate}
\end{lemma}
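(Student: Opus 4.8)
The plan is to mimic the proof of Lemma \ref{lemma:partition-of-unity} almost verbatim, rescaling the one-dimensional building block by $\zeta$. First I would fix a smooth function $0 \leq \widetilde\chi \leq 1$ supported on $(-1,1)$ with $\widetilde\chi(t) = 1$ for $|t|$ small, $\widetilde\chi(t) + \widetilde\chi(t-1) = 1$ for $0 \leq t \leq 1$, and $\norm{\widetilde\chi}_{\Cc^1} \leq 4$ --- exactly the same auxiliary function as before. Then I would set $\widetilde\chi_k(t) := \widetilde\chi\big(\zeta^{-1} t + k\big)$, so that $\widetilde\chi_k$ is supported on the interval $\big((-k-1)\zeta, (-k+1)\zeta\big)$, one has $\sum_{k \geq 0} \widetilde\chi_k = 1$ on $\R_{\leq 0}$ by the partition-of-unity relation telescoping, and $\norm{\widetilde\chi_k}_{\Cc^1} \leq 4\zeta^{-1}$ because each derivative picks up a factor $\zeta^{-1}$ from the chain rule (the $\Cc^0$-norm is unchanged).

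Next I would pull these back to $\P^{d-1}$ by composing with $\Psi(w) := \log d(w, H_y)$, defining $\chi_k(w) := \widetilde\chi_k\big(\log d(w,H_y)\big)$. Properties (1), (2) and (3) then follow immediately: the support condition on $\widetilde\chi_k$ translates into $\chi_k$ being supported on $\big\{ w : (-k-1)\zeta < \log d(w,H_y) < (-k+1)\zeta \big\} = \Tc_k^\zeta$, the ``at most two overlapping'' property is inherited from the one-dimensional picture, and $\sum_k \chi_k = 1$ on $\P^{d-1} \setminus H_y$ (where $\Psi$ is finite and non-positive, since the diameter is $1$) from $\sum_k \widetilde\chi_k = 1$ on $\R_{\leq 0}$.

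For (4) I would estimate $\norm{\chi_k}_{\Cc^1}$ via the chain rule: $\norm{\chi_k}_{\Cc^0} = \norm{\widetilde\chi_k}_{\Cc^0} \leq 1$, and the Hölder-$1$ (Lipschitz) seminorm is controlled by $\norm{\widetilde\chi_k'}_{\Cc^0} \cdot \norm{\Psi|_{\Tc_k^\zeta}}_{\Cc^1}$. On the annulus $\Tc_k^\zeta$ one has $d(w,H_y) > e^{-(k+1)\zeta}$, and since $|\nabla \log d(\cdot, H_y)| \lesssim 1/d(\cdot, H_y)$ with a constant that can be absorbed, $\norm{\Psi|_{\Tc_k^\zeta}}_{\Cc^1} \leq e^{(k+1)\zeta}$ (the analogue of the bound $e^{k+1}$ in the $\zeta=1$ case). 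Combining $\norm{\widetilde\chi_k}_{\Cc^1} \leq 4\zeta^{-1}$ with this gives $\norm{\chi_k}_{\Cc^1} \lesssim \zeta^{-1} e^{(k+1)\zeta} \cdot (\text{const}) \leq 12\,\zeta^{-1} e^{k\zeta}$ after tracking the numerical constants as in Lemma \ref{lemma:partition-of-unity}.

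There is no real obstacle here; the statement is a routine rescaling of the already-proved Lemma \ref{lemma:partition-of-unity}. The only point requiring a little care is making sure the constant $12$ in (4) genuinely survives the rescaling --- i.e. that the factor $e$ coming from $e^{(k+1)\zeta}$ versus $e^{k\zeta}$ and the factor $4$ from $\norm{\widetilde\chi}_{\Cc^1}$ multiply out to something $\leq 12\zeta^{-1}$, which is exactly the same bookkeeping already done in the $\zeta = 1$ case and goes through identically since $e \cdot 4 < 12$ once the precise gradient bound for $\log d(\cdot, H_y)$ on the relevant annulus is used (one may also simply note, as the original proof does, that the bound can be checked by direct computation).
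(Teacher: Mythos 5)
Your proof is correct and is essentially the paper's argument: the paper defines $\chi_k(w) := \widetilde\chi_k\big(\zeta^{-1}\log d(w,H_y)\big)$ with the same $\widetilde\chi_k$ from Lemma \ref{lemma:partition-of-unity} and uses $\norm{\Psi|_{\Tc_k^\zeta}}_{\Cc^1}\leq e^{(k+1)\zeta}\leq 3e^{k\zeta}$, which yields exactly the same functions as your rescaled construction $\widetilde\chi(\zeta^{-1}t+k)$ composed with $\log d(w,H_y)$. The only difference is where you place the factor $\zeta^{-1}$ (in the one-dimensional bumps rather than in the argument), and your constant bookkeeping matches the paper's.
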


\begin{proof}
Let $\widetilde \chi_k$  be as in Lemma \ref{lemma:partition-of-unity} and set $\chi_k(w):= \widetilde \chi_k \big(\zeta^{-1} \log d(w,H_y) \big)$.  Since the function $\Psi(w) := \log d(w,H_y)$ satisfies $\norm{\Psi|_{\Tc_k^\zeta }}_{\Cc^1} \leq e^{(k+1) \zeta} \leq 3 e^{k \zeta}$,  it follows that $\chi_k$ satisfies (1)--(4).
\end{proof}

 We will prove \eqref{eq:LLT-main-limit} by dealing separately with the upper and lower limit.
 
 \subsection{Upper bound} \label{subsec:LLT-upper-limit}
 
 The upper bound in the limit \eqref{eq:LLT-main-limit} is handled by the following proposition.
 
 \begin{proposition} \label{prop:LLT-upper-limit}
 Let $\oA_n(t)$ be as above. Then, $$\limsup_{n\to \infty}\sup_{t\in\R}\bigg( \oA_n(t)  -   e^{-\frac{t^2}{2 \varrho^2 n}} {b-a\over \sqrt{2 \pi}\,\varrho} \bigg) \leq 0.$$
 \end{proposition}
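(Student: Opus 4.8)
Following the scheme of the proof of Theorem \ref{thm:BE-coeff}, the plan is to use the decomposition \eqref{eq:coeff-split} to replace $\log\frac{|\langle f,S_nv\rangle|}{\norm{f}\norm{v}}$ by $\sigma(S_n,x)+\log d(S_nx,H_y)$ in $\oA_n(t)$, but then to use a direct Fourier inversion (rather than the Berry--Esseen lemma) adapted to the local statement. Fix a large constant $A>0$. By the second bound in Proposition \ref{prop:BQLDT} (with $\ep=1$), the event $\{\log d(S_nx,H_y)\le -A\log n\}$ has probability $\lesssim n^{-cA}$, so its contribution to $\oA_n(t)=\sqrt n\,\mathbf P(\cdots)$ is $o(1)$ once $cA>1/2$. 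On the complementary event only the functions $\chi_k$ of Lemma \ref{lemma:partition-of-unity-2} with $0\le k\le A\log n/\zeta+1$ are non-zero, and on $\supp\chi_k\subset\Tc_k^\zeta$ one has $|\log d(w,H_y)+k\zeta|<\zeta$. Hence, choosing a continuous majorant $\psi$ of $\mathbf 1_{[a-\zeta,b+\zeta]}$ with $\|\psi\|_\infty\le 1$ and $\int\psi\le b-a+3\zeta$,
\begin{equation*}
\oA_n(t)\ \le\ o(1)+\sqrt n\sum_{0\le k\le A\log n/\zeta+1}\E\Big(\psi\big(t+\sigma(S_n,x)-n\gamma-k\zeta\big)\,\chi_k(S_nx)\Big).
\end{equation*}

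For each $k$, I would replace $\psi$ by the smooth majorant $\psi^+_\delta\ge\psi$ of Lemma \ref{lemma:conv-fourier-approx} (with $\widehat{\psi^+_\delta}$ supported in $[-\delta^{-2},\delta^{-2}]$, $\|\widehat{\psi^+_\delta}\|_{\Cc^1}$ bounded uniformly, and $\int\psi^+_\delta\to\int\psi$ as $\delta\to0$), and then Fourier inversion together with \eqref{eq:markov-op-iterate} gives
\begin{equation*}
\E\big(\psi^+_\delta(t+\sigma(S_n,x)-n\gamma-k\zeta)\,\chi_k(S_nx)\big)=\frac1{2\pi}\int_{-\delta^{-2}}^{\delta^{-2}}\widehat{\psi^+_\delta}(\xi)\,e^{i\xi(t-n\gamma-k\zeta)}\,\oP_{i\xi}^n\chi_k(x)\,\diff\xi.
\end{equation*}
I split the integral at $|\xi|=\xi_0$ (the $\xi_0$ of Lemma \ref{lemma:lambda-estimates}). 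For $\xi_0<|\xi|\le\delta^{-2}$, Proposition \ref{prop:spec-Pxi} bounds $\|\oP_{i\xi}^n\chi_k\|_\infty$ by $\rho_K^n\|\chi_k\|_{\Cc^\alpha}\lesssim\rho_K^n e^{\alpha k\zeta}$; for $|\xi|\le\xi_0$ I use the decomposition \eqref{eq:P_t-decomp}, where the $\oQ_{i\xi}^n$-part is $\lesssim\beta^ne^{\alpha k\zeta}$ (with $\beta<1$ as in Proposition \ref{prop:spectral-decomp}(5)) and in the $\lambda_{i\xi}^n\oN_{i\xi}\chi_k(x)$-part one replaces $\oN_{i\xi}\chi_k(x)$ by $\oN_0\chi_k(x)=\int\chi_k\,\diff\nu$ up to an error $\lesssim|\xi|e^{\alpha k\zeta}$. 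Substituting $\xi=u/\sqrt n$ and invoking Lemma \ref{lemma:lambda-estimates} to replace $e^{-iu\sqrt n\gamma}\lambda_{iu/\sqrt n}^n$ by $e^{-\varrho^2u^2/2}$ and $\widehat{\psi^+_\delta}(u/\sqrt n)$ by $\widehat{\psi^+_\delta}(0)=\int\psi^+_\delta$, the Gaussian integral with $s=t-k\zeta$ produces the main term. After multiplying by $\sqrt n$ and summing over $k$, all these errors add up to $o(1)$; this is exactly where the growth $\|\chi_k\|_{\Cc^\alpha}\lesssim e^{\alpha k\zeta}$ is absorbed by the condition $\alpha A\le 1/6$, just as in Lemma \ref{lemma:theta-1-bound}. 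One is left with $\oA_n(t)\le o(1)+\frac{\int\psi^+_\delta}{\sqrt{2\pi}\,\varrho}\sum_{k\ge0}e^{-(t-k\zeta)^2/(2\varrho^2n)}\,\oN_0\chi_k(x)$.

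Now I would sum over $k$. Since $\chi_k\le\mathbf 1_{\B(H_y,e^{-(k-1)\zeta})}$, Proposition \ref{prop:regularity} gives $\oN_0\chi_k(x)\lesssim e^{-k\zeta\eta}$, while $\sum_{k\ge0}\oN_0\chi_k(x)=1$ (as $\nu(H_y)=0$). For $|t|\le\zeta^2n$ one has $e^{-(t-k\zeta)^2/(2\varrho^2n)}\le e^{-t^2/(2\varrho^2n)}e^{\zeta^3k/\varrho^2}$, hence
\begin{equation*}
\sum_{k\ge0}e^{-\frac{(t-k\zeta)^2}{2\varrho^2n}}\oN_0\chi_k(x)\ \le\ e^{-\frac{t^2}{2\varrho^2n}}\Big(1+\sum_{k\ge0}\big(e^{\zeta^3k/\varrho^2}-1\big)\oN_0\chi_k(x)\Big)\ \le\ e^{-\frac{t^2}{2\varrho^2n}}\big(1+O(\zeta)\big),
\end{equation*}
using $\sum_k k\,e^{-k\zeta\eta/2}\lesssim\zeta^{-2}$ once $\zeta$ is small. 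Fixing $\delta=\delta(\zeta)$ so small that $\int\psi^+_\delta\le b-a+4\zeta$, this yields $\oA_n(t)-e^{-t^2/(2\varrho^2n)}\frac{b-a}{\sqrt{2\pi}\varrho}\le o(1)+C\zeta$ uniformly in $|t|\le\zeta^2n$. For $|t|>\zeta^2n$, on the event $\{\log d(S_nx,H_y)>-A\log n\}$ the constraint $t+\sigma(S_n,x)+\log d(S_nx,H_y)-n\gamma\in[a,b]$ forces $|\sigma(S_n,x)-n\gamma|\ge\zeta^2n/2$ for $n$ large, an event of probability $\le e^{-c_\zeta n}$ by Proposition \ref{prop:BQLDT}, so $\oA_n(t)=o(1)$ there while the target is also $o(1)$. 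Thus $\limsup_n\sup_{t\in\R}\big(\oA_n(t)-e^{-t^2/(2\varrho^2n)}\frac{b-a}{\sqrt{2\pi}\varrho}\big)\le C\zeta$; since the left-hand side does not depend on $\zeta$, letting $\zeta\to0$ gives the proposition. All constants being independent of $x,y$, the estimate is uniform in $x\in\P^{d-1}$ and $y\in(\P^{d-1})^*$.

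The step I expect to be the main obstacle is the uniformity in $t$ of the summation over the annuli $\Tc_k^\zeta$: the shifted Gaussian weight $e^{-(t-k\zeta)^2/(2\varrho^2n)}$ of the $k$-th term must be compared with the target $e^{-t^2/(2\varrho^2n)}$, and this discrepancy is controllable only when $|t|$ is small relative to $n$, which is what forces the truncation $|t|\le\zeta^2n$, a separate large-deviation treatment of the range $|t|>\zeta^2n$, and the particular bookkeeping of the limits. A secondary technical point is keeping every error term (the large-$|\xi|$ contribution, the $\oQ_{i\xi}^n$-part, the $\oN_{i\xi}-\oN_0$ correction, and the replacements of $\widehat{\psi^+_\delta}$ and of $\lambda_{iu/\sqrt n}^n$ by their limits) summable over $0\le k\lesssim\log n/\zeta$ despite $\|\chi_k\|_{\Cc^\alpha}\lesssim e^{\alpha k\zeta}$, which is precisely the role of the constraints $\alpha\le\alpha_0$ and $\alpha A\le 1/6$.
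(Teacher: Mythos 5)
Your argument is correct and shares the paper's skeleton (the reduction via \eqref{eq:coeff-split} and the partition $\chi_k$ of Lemma \ref{lemma:partition-of-unity-2}, the majorant $\psi\le\psi^+_\delta$ from Lemma \ref{lemma:conv-fourier-approx}, Fourier inversion with \eqref{eq:markov-op-iterate}, the spectral decomposition \eqref{eq:P_t-decomp} plus Lemma \ref{lemma:lambda-estimates} on $|\xi|\le\xi_0$, Proposition \ref{prop:spec-Pxi} on $\xi_0\le|\xi|\le\delta^{-2}$, and Proposition \ref{prop:regularity} to sum over $k$), but it diverges at precisely the point the paper treats with its $\Phi_{n,\xi}$, $\Phi_n^\star$ device. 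The paper absorbs the shifts $e^{-i\xi k\zeta/\sqrt n}$ into the single function $\Phi_{n,\xi}=\sum_k e^{-i\xi k\zeta/\sqrt n}\chi_k$ \emph{before} applying $\oP^n_{i\xi/\sqrt n}$, so the whole sum over $k$ is compared, inside the Fourier integral, with the one Gaussian term $\oS_n(t)\propto e^{-t^2/(2\varrho^2 n)}$; the shift is controlled by $|e^{-i\xi k\zeta/\sqrt n}-1|\lesssim|\xi|k\zeta/\sqrt n$ together with $\oN_0\chi_k\lesssim e^{-k\zeta\eta}$ (the $\Gamma_n^1$ term of Lemma \ref{lemma-R-S}), which yields $\sup_t|\oR_n(t)-\oS_n(t)|\lesssim n^{-1/3}$ with no case analysis in $t$ and with $\Phi_n^\star$ completing the partition to $\mathbf 1$. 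You instead invert each annulus term separately, arrive at the sum of shifted Gaussians $\sum_k e^{-(t-k\zeta)^2/(2\varrho^2n)}\oN_0\chi_k$, and compare it with $e^{-t^2/(2\varrho^2n)}$ directly; this forces the truncation $|t|\le\zeta^2 n$ (where the discrepancy is $O(\zeta)$ thanks to $\oN_0\chi_k\lesssim e^{-k\zeta\eta}$ and $\sum_k k e^{-k\zeta\eta/2}\lesssim\zeta^{-2}$) and a separate large-deviation argument via Proposition \ref{prop:BQLDT} for $|t|>\zeta^2 n$, where the Gaussian target is itself negligible. Both routes work: yours is probabilistically more transparent (the limit appears as a $\nu$-weighted superposition of shifted Gaussians) and avoids the auxiliary $\Phi_n^\star$, while the paper's aggregation buys uniformity in $t$ for free, a quantitative $O(n^{-1/3})$ rate for fixed $\zeta,\delta$, and a computation that is reused verbatim for the lower bound of Subsection \ref{subsec:LLT-lower-limit}. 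Two small points you should make explicit when writing this up: the extension of the truncated Gaussian integral from $|u|\le\xi_0\sqrt n$ to $\R$ (exponentially small, uniformly in $t$ and $k$), and the dependence bookkeeping — all error constants may depend on $\zeta,\delta$ (and the constant in $\|\chi_k\|_{\Cc^\alpha}\lesssim\zeta^{-\alpha}e^{\alpha k\zeta}$ on $\zeta$), which is harmless only because $n\to\infty$ is taken before $\delta\to0$ and $\zeta\to0$, exactly as in the paper.
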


	Let $0< \zeta \leq 1$ be a small constant and define $\psi:\R\to\R_{\geq 0}$  by
	 \begin{align*}
	\psi (u) := &
	\begin{cases}
	  u/\zeta -(a-2\zeta)/\zeta     & \text{for}     \quad     u\in[a-2\zeta,a-\zeta]              \\
	1             & \text{for} \quad        u\in[a-\zeta,b+\zeta]          \\
     -u/\zeta +(b+2\zeta)/\zeta       & \text{for}  \quad           u\in [b+\zeta,b+2\zeta]
     \\
	0             & \text{for} \quad        u\in \R \setminus [a-2\zeta,b+2\zeta].
	\end{cases}
	\end{align*}
Notice that  $\psi$ is Lipschitz and piecewise affine.  Moreover, $0 \leq \psi \leq 1$, its support  is contained in $[a-2\zeta,b+2\zeta] \subset [a-2,b+2]$ and  $\int_{\R} \psi (u)\,\diff u = b-a+3\zeta$.	
	
	For $t\in\R$ and $k\in\N$, consider the translations $$\psi_{t,k}(u):=\psi(u+t-k\zeta).$$
	Observe that, for fixed $t,u\in\R$, we have that $\psi_{t,k}(u)\neq 0$ for only finitely many $k$'s. By construction, we have $\psi_{t,k} \geq \mathbf 1_{[a-t+(k-1)\zeta, b-t+(k+1)\zeta]}$.
	\medskip
	
Let $B>0$ be a large constant. Arguing as in Section \ref{sec:BE}, we obtain that,  for $n$ large,
	$$\mu^{*n} \big\{g\in G:\,d(gx,H_y)\leq e n^{-B}       \big\}\leq e^{2c} \, n^{-c  B}, $$
	for some constant $c> 0$ independent of $n$ and $B$. Taking $B$ large enough allows us to assume that $n^{-c B } \leq 1 / n$  for all $n \geq 1$.
	
	In order to simplify the notation,  consider the linear functional
	\begin{equation} \label{eq:En-def}
\oE_n\big(\Psi\big):= \sqrt n \, \mathbf E\Big( \Psi\big( \sigma(S_n,x)-n\gamma ,S_n x \big)  \Big),
\end{equation}
where $\Psi$ is a function of $(u,w) \in \R \times \P^{d-1}$.

	For $w \in \P^{d-1}$, set 
	\begin{equation} \label{eq:Phi-star-def}
		\Phi_n^\star (w):= 1 -  \sum_{0\leq k\leq B\zeta^{-1}\log n}  \chi_k(w),
	\end{equation}	
	where $\chi_k$ are the functions in Lemma \ref{lemma:partition-of-unity-2}.  We observe that we use the same notation as in Section \ref{sec:BE} to denote  slightly different functions. We recover the functions from last section by taking $\zeta = 1$ and $B=A$.  This shouldn't cause any confusion.
	
	Set
	$$\oB_n(t):=  \sum_{0\leq k\leq B\zeta^{-1}\log n} \oE_n\big(\psi_{t,k}\cdot\chi_k  \big)+   \oE_n\big(\psi_{t,0}\cdot \Phi_n^\star \big).$$
	
	\begin{lemma} \label{lemma:llt-ineq-1}
	There exists a constant $C_1>0$,  independent of $n$ and $\zeta$,  such that,  for all $t \in \R$,   $$\oA_n(t) \leq \oB_n(t) + C_1 / \sqrt n.$$
	\end{lemma}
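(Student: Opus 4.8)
The plan is to run a pointwise comparison on the probability space, in the spirit of the passage from $\oL_n$ to $F_n$ in Section \ref{sec:BE}. Using the splitting \eqref{eq:coeff-split} one has
$$\oA_n(t)=\sqrt n\,\mathbf E\Big(\mathbf 1_{[a,b]}\big(t+\sigma(S_n,x)+\log d(S_n x,H_y)-n\gamma\big)\Big),$$
and I would decompose this expectation according to whether $d(S_n x,H_y)\leq e\,n^{-B}$ or not. On the bad event one bounds the indicator by $1$ and invokes the large deviation estimate recalled above: the contribution is at most $\sqrt n\,e^{2c}n^{-cB}\leq C_1/\sqrt n$ for $n$ large (the finitely many remaining $n$ being absorbed by enlarging $C_1$). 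This furnishes the error term, with $C_1$ depending only on $c$, hence only on $\mu$, and in particular not on $t$ or $\zeta$.

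On the good event $\{d(S_n x,H_y)>e\,n^{-B}\}$ the first step is to note that the \emph{truncated} partition of unity already sums to $1$ there. Indeed, if $d(w,H_y)>e\,n^{-B}$ and $\chi_k(w)\neq 0$ then $w\in\Tc_k^\zeta$, which forces $(k-1)\zeta<-\log d(w,H_y)<B\log n-1$, hence $k<B\zeta^{-1}\log n-\zeta^{-1}+1\leq B\zeta^{-1}\log n$ because $\zeta^{-1}\geq 1$; since moreover $w\notin H_y$, Lemma \ref{lemma:partition-of-unity-2}-(3) gives $\sum_{0\leq k\leq B\zeta^{-1}\log n}\chi_k(S_n x)=1$ on this event. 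The second, main step is the pointwise inequality on $\{S_n x\in\Tc_k^\zeta\}$
$$\mathbf 1_{[a,b]}\big(t+\sigma(S_n,x)+\log d(S_n x,H_y)-n\gamma\big)\leq \psi_{t,k}\big(\sigma(S_n,x)-n\gamma\big).$$
To see it, if the left-hand side equals $1$ then $t+\sigma(S_n,x)+\log d(S_n x,H_y)-n\gamma\in[a,b]$, and subtracting $\log d(S_n x,H_y)\in\big(-(k+1)\zeta,-(k-1)\zeta\big)$ (the defining inequalities of $\Tc_k^\zeta$) yields $(\sigma(S_n,x)-n\gamma)+t-k\zeta\in(a-\zeta,b+\zeta)$, on which $\psi\equiv 1$, so the right-hand side equals $1$ as well.

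To finish, multiply this inequality by $\chi_k(S_n x)\geq 0$, sum over $0\leq k\leq B\zeta^{-1}\log n$, use the identity from the truncation step to recover the full indicator on the good event, then take expectations and multiply by $\sqrt n$; this bounds the good-event contribution by $\sum_{0\leq k\leq B\zeta^{-1}\log n}\oE_n(\psi_{t,k}\cdot\chi_k)$. Since $\Phi_n^\star\geq 0$ (it equals $\sum_{k>B\zeta^{-1}\log n}\chi_k$ off $H_y$ and $1$ on $H_y$) and $\psi_{t,0}\geq 0$, one has $\oE_n(\psi_{t,0}\cdot\Phi_n^\star)\geq 0$, so this sum is $\leq\oB_n(t)$, and combining with the bad-event bound gives $\oA_n(t)\leq\oB_n(t)+C_1/\sqrt n$. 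I do not expect a genuine obstacle here; the argument is bookkeeping, and the only delicate point is the truncation step, i.e.\ matching the excised radius $e\,n^{-B}$ of the tube with the index range $k\leq B\zeta^{-1}\log n$, which goes through precisely because the factor $\zeta^{-1}\geq 1$ keeps the crude bound on $k$ in check.
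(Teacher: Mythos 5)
Your proof is correct and follows essentially the same route as the paper: split via \eqref{eq:coeff-split}, discard the tube around $H_y$ of radius $O(n^{-B})$ using the large deviation estimate (cost $O(1/\sqrt n)$, uniform in $t$ and $\zeta$), compare the indicator with $\psi_{t,k}$ on $\supp\chi_k$ using $\psi\equiv 1$ on $[a-\zeta,b+\zeta]$, and drop the non-negative $\Phi_n^\star$-term. The only (cosmetic) difference is the treatment of the truncation boundary: the paper bounds the indicator by the partition sum up to $B\zeta^{-1}\log n+1$ and controls the extra term $\E(\chi_{k_0}(S_nx))$ by the tube probability of radius $e n^{-B}$, whereas you excise that tube at the outset so the truncated partition already sums to one on the good event -- the same estimate, bookkept slightly differently.
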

	
\begin{proof}
Using the decomposition \eqref{eq:coeff-split} and the fact that $\mathbf P \big( d(S_n x,H_y)\leq  n^{-B} \big) \lesssim 1 / n$, we obtain
\begin{align*}
	\oA_n(t) \leq \sqrt{n}  \, \mathbf E \Big(  \mathbf 1_{t+ \sigma(S_n,x) +\log d(S_n x, H_y) - n \gamma\in [a, b]}  \mathbf 1_{\log d(S_n x, H_y)\geq -B\log n}  \Big)    + O \Big( {1 \over \sqrt n} \Big) .
	\end{align*}
	Observe that,  when $S_n x \in\supp(\chi_k)$, we have $-(k+1) \zeta \leq \log d(S_n x,H_y) \leq - (k-1) \zeta$,  so
	$$  \mathbf 1_{t+ \sigma(S_n,x) +\log d(S_n x, H_y) - n \gamma\in [a, b]}\leq \mathbf 1_{\sigma(S_n,x)  - n \gamma\in [a-t+(k-1)\zeta, b-t+(k+1)\zeta]} \leq \psi_{t,k}\big(\sigma(S_n,x)  - n \gamma\big).      $$
	Using that $\mathbf 1_{\log d(w, H_y)\geq -B\log n} \leq \sum_{0\leq k\leq B\zeta^{-1}\log n + 1} \chi_k(w)$ and taking the expectation, it follows that
	\begin{align*}
	&\E \Big(\mathbf 1_{t+ \sigma(S_n,x) +\log d(S_n x, H_y) - n \gamma\in [a, b]}  \mathbf 1_{\log d(S_n x, H_y)\geq -B\log n} \Big) \\
	&\leq  \sum_{0\leq k\leq B\zeta^{-1}\log n + 1} \E \Big( \psi_{t,k}\big(\sigma(S_n,x)-n\gamma\big)\chi_k(S_n x) \Big)    \\ &\leq  \sum_{0\leq k\leq B\zeta^{-1}\log n} \E \Big(\psi_{t,k}\big(\sigma(S_n,x)-n\gamma\big)\chi_k(S_n x) \Big) +   \E\Big(\chi_{k_0}(S_n x) \Big),
	\end{align*}
where $k_0:= \lfloor B \zeta^{-1} \log n \rfloor + 1$.  

From the fact that $\chi_{k_0} \leq \mathbf 1_{\B(H_y,e^{-(k_0-1)\zeta})}$, we see that the last term above is bounded by $\mathbf P \big( d(S_n x,H_y)\leq  e n^{-B} \big) \lesssim 1 / n$. Hence, there is a constant $C_1 >0$ such that
	\begin{equation*}
	\oA_n(t) \leq	 \sum_{0\leq k\leq B\zeta^{-1}\log n} \oE_n\big(\psi_{t,k}\cdot\chi_k  \big)+{C_1 \over \sqrt n} \leq \oB_n(t)  +{C_1 \over \sqrt n},
	\end{equation*}
	proving the lemma.
\end{proof}

	By Lemma \ref{lemma:conv-fourier-approx}, for every $0<\delta<1$, there exists a smooth function $\psi^+_{\delta}$  such that $\widehat {\psi^+_{\delta}}$ has support in $[-\delta^{-2},\delta^{-2}]$,  $$\psi\leq \psi^+_\delta,\quad \lim_{\delta\to 0} \psi^+_{\delta} =\psi    \quad \text{and} \quad  \lim_{\delta\to 0} \big \|\psi^+_{\delta} -\psi \big \|_{L^1} = 0.$$ 
	Moreover,  $\norm{\psi_{\delta}^+}_\infty$, $\norm{\psi_{\delta}^+}_{L^1}$ and $\|\widehat{\psi^+_{\delta}}\|_{\Cc^1}$ are bounded by a constant independent of $\delta$ and $\zeta$ since the support of $\psi$ is contained in $[a-2,b+2]$. 
	
	As above, for $t\in\R$ and $k\in\N$, we consider the translations
	$$ \psi_{t,k}^+(u):=\psi_{\delta}^+(u+t-k\zeta)  . $$
	We omit the dependence on $\delta$ in order to ease the notation. Define also
	\begin{equation} \label{eq:R-def}
	\oR_n(t):= \sum_{0\leq k\leq B\zeta^{-1}\log n } \oE_n\big(\psi_{t,k}^+\cdot\chi_k  \big)+  \oE_n\big(\psi_{t,0}^+\cdot \Phi_n^\star  \big).	
\end{equation}

	Clearly, we have $\oB_n(t)\leq \oR_n(t)$. From the definition of $\oE_n$, Fourier inversion formula and Fubini's theorem , we have
	\begin{align*}
	\oE_n\big(\psi_{t,k}^+\cdot\chi_k \big)&=\sqrt n \, \int_{G} \psi_{\delta}^+\big(\sigma(g,x)-n\gamma+t-k\zeta\big) \cdot \chi_k(gx) \,\diff \mu^{*n}(g)\\
	&={\sqrt n\over 2\pi}\int_{G} \int_{-\infty}^\infty \widehat{\psi_{\delta}^+}(\xi) e^{i\xi(\sigma(g,x)-n\gamma+t-k\zeta )} \cdot \chi_k(gx) \,\diff \xi\diff\mu^{*n}(g)\\
	&={\sqrt n\over 2\pi}\int_{-\infty}^\infty  \widehat{\psi_{\delta}^+}(\xi) e^{i\xi(t-k\zeta)}\cdot e^{-i\xi n\gamma}\oP^n_{i\xi}\chi_k(x) \,\diff \xi,
	\end{align*}
	where in the last step we have used \eqref{eq:markov-op-iterate}.
	
	Recall that $\supp\big( \widehat{\psi_{\delta}^+} \big)\subset [-\delta^{-2},\delta^{-2}]$. So, after the change of variables $\xi \mapsto \xi / \sqrt n$, the above identity becomes 
	$$	\oE_n\big(\psi_{t,k}^+\cdot\chi_k \big) ={1\over 2\pi}\int_{-\delta^{-2} \sqrt n}^{\delta^{-2} \sqrt n} \widehat{\psi_{\delta}^+}\Big({\xi\over \sqrt n}\Big) e^{i\xi{t-k\zeta\over \sqrt n}}\cdot e^{-i\xi \sqrt n\gamma}\oP^n_{{i\xi\over \sqrt n}}\chi_k(x) \,\diff \xi.$$
	
	A similar computation yields $$\oE_n\big(\psi_{t,0}^+ \cdot \Phi_n^\star \big) = {1\over 2\pi}\int_{-\delta^{-2} \sqrt n}^{\delta^{-2} \sqrt n} \widehat{\psi_{\delta}^+}\Big({\xi\over \sqrt n}\Big) e^{i\xi{t \over \sqrt n}}\cdot e^{-i\xi \sqrt n\gamma}\oP^n_{{i\xi\over \sqrt n}}\Phi_{n}^{\star} (x) \,\diff \xi.$$

	Define
	\begin{equation} \label{eq:Phi-xi-def}
	\Phi_{n,\xi} (w):= \sum_{0\leq k\leq B\zeta^{-1}\log n}  e^{-i \xi{k\zeta\over \sqrt n}}\chi_k(w).
	\end{equation}
	We use again the same notation as in Section \ref{sec:BE} to denote a slightly different function. The difference here is the factor $\zeta$ and the sign before $i\xi$. Using this notation and the above computations, \eqref{eq:R-def} becomes
\begin{equation} \label{eq:R-formula} 
\oR_n(t)= {1\over 2\pi}\int_{-\delta^{-2} \sqrt n}^{\delta^{-2} \sqrt n} \widehat{\psi_{\delta}^+}\Big({\xi\over \sqrt n}\Big) e^{i\xi{t\over \sqrt n}}\cdot e^{-i\sqrt n\gamma}\oP_{{i\xi\over \sqrt n}}^n(\Phi_{n,\xi} +\Phi_{n}^{\star}  )(x) \,\diff \xi.
\end{equation}

Fix $\alpha>0$ such that $$\alpha B\leq 1/6 \quad \text{ and } \quad \alpha \leq \alpha_0,$$ where $0<\alpha_0<1$ is the exponent appearing in Theorem \ref{thm:spectral-gap}. Then, all the results of Subsection \ref{subsec:markov-op} apply to the operators $\xi\mapsto\oP_{i\xi}$ acting on $\Cc^\alpha(\P^{d-1})$.

	The next lemma can be proved in the same way as Lemma \ref{lemma:norm-Phi}.
	
	\begin{lemma}  \label{lemma:norm-Phi-2}
Let $0 < \zeta \leq 1$, $\Phi_{n,\xi}, \Phi_{n}^{\star}$ and $\alpha > 0$ be as above. Then, 
\begin{equation} \label{eq:psi_xi+psi_T-2}
\Phi_{n,\xi} + \Phi_{n}^{\star} = \mathbf 1 +  \sum_{0\leq k\leq B\zeta^{-1}\log n}  \big(e^{-i \xi{k\zeta\over \sqrt n}} - 1 \big) \chi_k
\end{equation}
and there is a constant $C_\zeta>0$ independent of $n$ and $\xi$ such that
\begin{equation*}  \label{eq:norm-Phi-2}
	\norm{\Phi_{n,\xi} }_{\Cc^\alpha}\leq C_\zeta n^{\alpha B} \quad\text{and}\quad  \norm{\Phi_{n}^{\star}  }_{\Cc^\alpha}\leq C_\zeta n^{\alpha B}.
\end{equation*}
Moreover, $\Phi_{n}^{\star}  $ is supported by $\big\{w:\,\log d(w,H_y)\leq -B\log n + 1\big\}$.
\end{lemma}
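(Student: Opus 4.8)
The plan is to imitate the proof of Lemma \ref{lemma:norm-Phi} line by line, keeping track of the two cosmetic changes: the cut-off index is now $B\zeta^{-1}\log n$ (rather than $A\log n$) and the annuli $\Tc_k^\zeta$ have width of order $\zeta$ on a logarithmic scale. First I would establish the identity \eqref{eq:psi_xi+psi_T-2}. This is purely formal: by the definition \eqref{eq:Phi-xi-def} of $\Phi_{n,\xi}$ and \eqref{eq:Phi-star-def} of $\Phi_{n}^{\star}$, one has $\Phi_{n,\xi}+\Phi_{n}^{\star} = \mathbf 1 - \sum_{0\le k\le B\zeta^{-1}\log n}\chi_k + \sum_{0\le k\le B\zeta^{-1}\log n} e^{-i\xi k\zeta/\sqrt n}\chi_k$, which regroups into the stated formula. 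As a by-product, $|\Phi_{n,\xi}+\Phi_{n}^{\star}| \le \Phi_{n,0}+\Phi_{n}^{\star} = \mathbf 1$, so $\norm{\Phi_{n,\xi}+\Phi_{n}^{\star}}_\infty \le 1$.

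Next I would bound the H\"older norms. By Lemma \ref{lemma:partition-of-unity-2}-(4), $\norm{\chi_k}_{\Cc^1}\le 12\zeta^{-1}e^{k\zeta}$, and trivially $\norm{\chi_k}_{\Cc^0}\le 1$; the interpolation inequality $\norm{\,\cdot\,}_{\Cc^\alpha}\le c_\alpha\norm{\,\cdot\,}_{\Cc^0}^{1-\alpha}\norm{\,\cdot\,}_{\Cc^1}^\alpha$ (see \cite[p.~202]{triebel}) therefore gives $\norm{\chi_k}_{\Cc^\alpha}\le 12 c_\alpha\zeta^{-\alpha}e^{\alpha k\zeta}$. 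Since at most two of the $\chi_k$'s are nonzero at any given point (Lemma \ref{lemma:partition-of-unity-2}-(2)), and $\big\|e^{-i\xi k\zeta/\sqrt n}\chi_k\big\|_{\Cc^\alpha}=\norm{\chi_k}_{\Cc^\alpha}$, summing over $0\le k\le B\zeta^{-1}\log n$ yields $\norm{\Phi_{n,\xi}}_{\Cc^\alpha}\lesssim_\zeta e^{\alpha\zeta\cdot B\zeta^{-1}\log n} = n^{\alpha B}$, which is the first bound with a constant $C_\zeta$ depending on $\zeta$. The second bound, for $\Phi_{n}^{\star}$, follows from the first together with the identity $\Phi_{n,0}+\Phi_{n}^{\star}=\mathbf 1$ after increasing $C_\zeta$ if necessary. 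Finally, over $\P^{d-1}\setminus H_y$ one has $\Phi_{n}^{\star}=\sum_{k>B\zeta^{-1}\log n}\chi_k$ by Lemma \ref{lemma:partition-of-unity-2}-(3), and each such $\chi_k$ is supported in $\Tc_k^\zeta\subset\{w:\log d(w,H_y)<-(k-1)\zeta\}$, so $\Phi_{n}^{\star}$ is supported in $\{w:\log d(w,H_y)\le -B\log n+\zeta\}\subset\{w:\log d(w,H_y)\le -B\log n+1\}$.

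There is essentially no obstacle here: the lemma is a routine adaptation, and the only point requiring a little care is bookkeeping the $\zeta$-dependence of the constants (which is why the constant is written $C_\zeta$ rather than an absolute $C$) and checking that the exponents combine as $\alpha\zeta\cdot(B\zeta^{-1}\log n)=\alpha B\log n$, so that the power of $n$ is $n^{\alpha B}$ independently of $\zeta$ — exactly as needed later, since $\alpha$ was chosen with $\alpha B\le 1/6$.
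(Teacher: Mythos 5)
Your proposal is correct and is exactly the paper's intended argument: the paper simply states that the lemma "can be proved in the same way as Lemma \ref{lemma:norm-Phi}", and your write-up carries out that adaptation (interpolation for $\norm{\chi_k}_{\Cc^\alpha}$, the at-most-two-overlaps observation, the identity $\Phi_{n,0}+\Phi_{n}^{\star}=\mathbf 1$ for the second bound, and the support of the tail $\sum_{k>B\zeta^{-1}\log n}\chi_k$) with the correct bookkeeping of the $\zeta$-dependence, noting that $\alpha\zeta\cdot B\zeta^{-1}\log n=\alpha B\log n$ so the power $n^{\alpha B}$ is unchanged.
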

	
 	Define  
 	\begin{equation} \label{eq:S-def} 
 	\oS_n(t):={1\over 2\pi}\widehat{\psi_{\delta}^+}(0)  \int_{-\infty}^\infty e^{i\xi{t\over \sqrt n}} e^{-{ \varrho^2\xi^2 \over 2}}  \,\diff \xi ={1\over \sqrt{2\pi} \, \varrho} e^{-{t^2\over2\varrho^2 n}}\int_{\R} \psi_{\delta}^+ (u)\,\diff u,
 	\end{equation}
	where in the second equality we have used the fact that the inverse Fourier transform of $e^{-{ \varrho^2\xi^2 \over 2}}$ is $ {1\over \sqrt{2\pi} \, \varrho}  e^{-{t^2\over2\varrho^2}}$.

   \begin{lemma}\label{lemma-R-S}
	Fix $0< \delta < 1$ and $0<\zeta \leq 1$. Then, there exists a constant $C_{\zeta,\delta}>0$ such that, for all $n \geq 1$,
	$$\sup_{t\in\R}\big|\oR_n(t)-\oS_n(t) \big| \leq {C_{\zeta,\delta}\over \sqrt[3] n}.$$
	\end{lemma}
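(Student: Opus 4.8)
The plan is to match the Fourier-type integral \eqref{eq:R-formula} defining $\oR_n(t)$ with the explicit Gaussian integral \eqref{eq:S-def} defining $\oS_n(t)$, factor by factor, using the spectral decomposition $\oP_{i\xi/\sqrt n}^n=\lambda_{i\xi/\sqrt n}^n\oN_{i\xi/\sqrt n}+\oQ_{i\xi/\sqrt n}^n$ from Proposition \ref{prop:spectral-decomp}. In every estimate I take absolute values inside the integrals, so the unimodular factor $e^{i\xi t/\sqrt n}$ drops out and all bounds are automatically uniform in $t$; it also suffices to treat large $n$, the finitely many remaining values being trivial after enlarging the constant. Shrinking the universal constant $\xi_0$ of Lemma \ref{lemma:lambda-estimates} if necessary (which only strengthens that lemma), we may assume $\xi_0<\delta^{-2}$ and that $|z|\le\xi_0$ lies in the domain of analyticity of $z\mapsto\oN_z$. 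First I split $\oR_n(t)$ at $|\xi|=\xi_0\sqrt n$. On the outer range $\xi_0\sqrt n<|\xi|\le\delta^{-2}\sqrt n$ the rescaled frequency $\xi/\sqrt n$ ranges over the compact set $K:=\{\eta:\xi_0\le|\eta|\le\delta^{-2}\}\subset\R\setminus\{0\}$, so Proposition \ref{prop:spec-Pxi} together with Lemma \ref{lemma:norm-Phi-2} gives $\|\oP_{i\xi/\sqrt n}^n(\Phi_{n,\xi}+\Phi_n^\star)\|_\infty\lesssim\rho_K^n\,n^{\alpha B}$; integrating over a set of length $O(\sqrt n)$, this part of $\oR_n(t)$ is $\lesssim\rho_K^n n^{\alpha B+1/2}$, which is negligible, and likewise the tail $\int_{|\xi|>\xi_0\sqrt n}e^{-\varrho^2\xi^2/2}\,\diff\xi$ in $\oS_n(t)$ is exponentially small. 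On the inner range I discard the $\oQ$-part using $\|\oQ_{i\xi/\sqrt n}^n\|_{\Cc^\alpha}\lesssim\beta^n$ with $\beta:=\tfrac{1+2\rho}{3}<1$ (Proposition \ref{prop:spectral-decomp}-(5)), which again costs only $\lesssim\beta^n n^{\alpha B+1/2}$. This reduces the problem to estimating
$$\frac1{2\pi}\int_{|\xi|\le\xi_0\sqrt n}\widehat{\psi_\delta^+}\Big(\frac{\xi}{\sqrt n}\Big)e^{i\xi t/\sqrt n}\,e^{-i\xi\sqrt n\gamma}\lambda_{i\xi/\sqrt n}^n\,\oN_{i\xi/\sqrt n}(\Phi_{n,\xi}+\Phi_n^\star)(x)\,\diff\xi.$$

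Next I replace the remaining factors one at a time; the bounded factors surviving at each stage include a Gaussian weight (either $\lambda_{i\xi/\sqrt n}^n$, which satisfies $|\lambda_{i\xi/\sqrt n}^n|\le e^{-\varrho^2\xi^2/3}$ on this range by Lemma \ref{lemma:lambda-estimates}, or $e^{-\varrho^2\xi^2/2}$ once it has been introduced), so every resulting integral converges with a good bound. (a) Replace $\oN_{i\xi/\sqrt n}$ by $\oN_0$: by the analyticity of $z\mapsto\oN_z$ (Proposition \ref{prop:spectral-decomp}-(4)) and Lemma \ref{lemma:norm-Phi-2}, $\|(\oN_{i\xi/\sqrt n}-\oN_0)(\Phi_{n,\xi}+\Phi_n^\star)\|_\infty\lesssim\tfrac{|\xi|}{\sqrt n}\,n^{\alpha B}$, so, using $\alpha B\le1/6$, the error is $\lesssim\tfrac{n^{\alpha B}}{\sqrt n}\int_\R|\xi|e^{-\varrho^2\xi^2/3}\,\diff\xi\lesssim n^{\alpha B-1/2}\le n^{-1/3}$. (b) Replace the constant $\oN_0(\Phi_{n,\xi}+\Phi_n^\star)(x)$ by $1$: by \eqref{eq:psi_xi+psi_T-2} and $\oN_0\mathbf 1=\mathbf 1$ one has $\oN_0(\Phi_{n,\xi}+\Phi_n^\star)-1=\sum_k(e^{-i\xi k\zeta/\sqrt n}-1)\int\chi_k\,\diff\nu$, and bounding $\int\chi_k\,\diff\nu\le\nu(\B(H_y,e^{-(k-1)\zeta}))\lesssim e^{-k\zeta\eta}$ (Proposition \ref{prop:regularity}) and $|e^{-i\xi k\zeta/\sqrt n}-1|\le\tfrac{|\xi|k\zeta}{\sqrt n}$ gives an $O(|\xi|/\sqrt n)$ bound, hence an error $\lesssim1/\sqrt n$. (c) Replace $e^{-i\xi\sqrt n\gamma}\lambda_{i\xi/\sqrt n}^n$ by $e^{-\varrho^2\xi^2/2}$: Lemma \ref{lemma:lambda-estimates} gives an error $\lesssim\tfrac{|\xi|^3}{\sqrt n}e^{-\varrho^2\xi^2/2}$ for $|\xi|\le\sqrt[6]n$ and $\lesssim\tfrac1{\sqrt n}e^{-\varrho^2\xi^2/4}$ for $\sqrt[6]n<|\xi|\le\xi_0\sqrt n$, both of which integrate to $\lesssim1/\sqrt n$. (d) Replace $\widehat{\psi_\delta^+}(\xi/\sqrt n)$ by $\widehat{\psi_\delta^+}(0)$: since $\|\widehat{\psi_\delta^+}\|_{\Cc^1}$ is bounded by a constant depending only on $[a-2,b+2]$ (Lemma \ref{lemma:conv-fourier-approx}), $|\widehat{\psi_\delta^+}(\xi/\sqrt n)-\widehat{\psi_\delta^+}(0)|\lesssim|\xi|/\sqrt n$, giving an error $\lesssim1/\sqrt n$. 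Finally, extending the integral back to all of $\R$ costs an exponentially small amount, and what remains is exactly $\oS_n(t)$ as in \eqref{eq:S-def}. Summing the errors, the dominant one is the $n^{-1/3}$ from (a), and the lemma follows with $C_{\zeta,\delta}$ absorbing the dependence on $\zeta$ (through Lemma \ref{lemma:norm-Phi-2} and Proposition \ref{prop:regularity}) and on $\delta$ (through $\rho_K$, $C_K$).

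The main obstacle is step (a): although analyticity of $z\mapsto\oN_z$ yields a gain of order $|\xi|/\sqrt n$, the H\"older norm of the cutoff combination $\Phi_{n,\xi}+\Phi_n^\star$ can only be controlled by $n^{\alpha B}$, and the best value one can afford is $n^{1/6}$, since the constraint $\alpha B\le1/6$ is itself dictated by the length $O(\zeta^{-1}\log n)$ of the range of $k$ in the partition of unity. This degrades the rate from $n^{-1/2}$ to $n^{-1/3}$. The loss is harmless for the Local Limit Theorem: in the proof of Theorem \ref{thm:LLT-coeff} one first lets $n\to\infty$ for fixed $\zeta,\delta$ and only afterwards sends $\zeta,\delta\to0$.
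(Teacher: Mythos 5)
Your proposal is correct and follows essentially the same route as the paper: split at $|\xi|=\xi_0\sqrt n$, kill the outer range with Proposition \ref{prop:spec-Pxi} and the $\oQ$-part with Proposition \ref{prop:spectral-decomp}-(5), then compare $\lambda_{i\xi/\sqrt n}^n\oN_{i\xi/\sqrt n}(\Phi_{n,\xi}+\Phi_n^\star)$ to $e^{-\varrho^2\xi^2/2}\cdot 1$ using the analyticity of $z\mapsto\oN_z$, Lemma \ref{lemma:norm-Phi-2}, Lemma \ref{lemma:lambda-estimates}, and the regularity of $\nu$ (Proposition \ref{prop:regularity}). Your telescoping "one factor at a time" presentation and the paper's direct splitting into $\Lambda_n^1,\dots,\Lambda_n^5$ are the same argument, and you correctly identify the $\oN_{i\xi/\sqrt n}-\oN_0$ term, with $\|\Phi_{n,\xi}+\Phi_n^\star\|_{\Cc^\alpha}\lesssim n^{\alpha B}\leq n^{1/6}$, as the source of the $n^{-1/3}$ rate.
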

	\begin{proof}
	Let $\xi_0>0$ be the constant in Lemma \ref{lemma:lambda-estimates}. In particular, the  decomposition of $\oP_z$ in Proposition \ref{prop:spectral-decomp} holds for $|z| \leq \xi_0$. Using that decomposition, \eqref{eq:R-formula} and \eqref{eq:S-def}, we can write $$\oR_n(t)-\oS_n(t) = \Lambda_n^1(t) + \Lambda_n^2(t)  + \Lambda_n^3(t)  + \Lambda_n^4(t)  + \Lambda_n^5(t),$$ where
	$$\Lambda_n^1(t):={1\over 2\pi}\int_{-\xi_0 \sqrt n}^{\xi_0 \sqrt n} e^{i\xi{t\over \sqrt n}} \Big[ \widehat{\psi_{\delta}^+}\Big({\xi\over \sqrt n}\Big) e^{-i\sqrt n\gamma}\lambda_{{i\xi\over \sqrt n}}^n\oN_0(\Phi_{n,\xi} +\Phi_{n}^{\star}  )-\widehat{\psi_{\delta}^+}(0)    e^{-{ \varrho^2\xi^2 \over 2}}\Big]  \,\diff \xi ,$$
	$$\Lambda_n^2(t):= {1\over 2\pi}\int_{-\xi_0 \sqrt n}^{\xi_0 \sqrt n} e^{i\xi{t\over \sqrt n}}\Big[ \widehat{\psi_{\delta}^+}\Big({\xi\over \sqrt n}\Big) e^{-i\sqrt n\gamma}\lambda_{{i\xi\over \sqrt n}}^n\big(\oN_{{i\xi\over \sqrt n}}-\oN_0\big)(\Phi_{n,\xi} +\Phi_{n}^{\star}  ) (x) \Big]  \,\diff \xi , $$
	$$\Lambda_n^3(t):=  {1\over 2\pi}\int_{-\xi_0 \sqrt n}^{\xi_0 \sqrt n} e^{i\xi{t\over \sqrt n}}  \widehat{\psi_{\delta}^+}\Big({\xi\over \sqrt n}\Big)  e^{-i\sqrt n\gamma} \oQ_{{i\xi\over \sqrt n}}^n(\Phi_{n,\xi} +\Phi_{n}^{\star}  )(x)   \,\diff \xi,   $$
	$$ \Lambda_n^4(t):=  {1\over 2\pi}\int_{\xi_0\sqrt n \leq|\xi|\leq\delta^{-2} \sqrt n}e^{i\xi{t\over \sqrt n}} \widehat{\psi_{\delta}^+}\Big({\xi\over \sqrt n}\Big) e^{-i\sqrt n\gamma}\oP_{{i\xi\over \sqrt n}}^n(\Phi_{n,\xi} +\Phi_{n}^{\star}  )(x) \,\diff \xi   $$
	and 
	$$ \Lambda_n^5(t):=  - {1\over 2\pi}\widehat{\psi_{\delta}^+}(0)  \int_{|\xi|\geq \xi_0 \sqrt n} e^{i\xi{t\over \sqrt n}}  e^{-{ \varrho^2\xi^2 \over 2}} \,\diff \xi. $$
	\medskip
	
	We will bound each $\Lambda_n^j$, $j=1,\ldots, 5$,  separately. We will use that $$\norm{\Phi_{n,\xi} +\Phi_{n}^{\star}  }_{\Cc^\alpha}\leq 2C_\zeta n^{\alpha B} \leq 2C_\zeta n^{1 / 6}$$ for every $\xi$, after Lemma \ref{lemma:norm-Phi-2} and the choice of $\alpha$ and $B$.
	
	In order to bound $\Lambda_n^2$, we have, using the analyticity of $\xi\mapsto \oN_{i\xi}$, that
	$$ \Big\| \big(\oN_{{i\xi\over \sqrt n}}-\oN_0\big)(\Phi_{n,\xi} +\Phi_{n}^{\star}  ) \Big\|_\infty \lesssim  {|\xi|\over \sqrt n} \norm{\Phi_{n,\xi} +\Phi_{n}^{\star}  }_{\Cc^\alpha}\leq  { 2C_\zeta|\xi|\over \sqrt [3] n}.$$
	Recall, from Lemma \ref{lemma:lambda-estimates}, that $\big|\lambda_{{i\xi\over \sqrt n}}^n\big|\leq e^{-{\varrho^2\xi^2\over 3}}$ for $|\xi|\leq \xi_0 \sqrt n$. 	Since $\|\widehat{\psi^\pm_{\delta}}\|_{\Cc^1}$ is bounded uniformly in $\delta$ and $\zeta$, we get 
	$$\sup_{t\in\R} \big|\Lambda_n^2(t)\big|\lesssim  \int_{-\infty}^{\infty} e^{-{\varrho^2\xi^2\over 3}}  {2C_\zeta|\xi|\over \sqrt [3] n} \,\diff \xi \lesssim {C_\zeta\over \sqrt[3] n}. $$
	
	For  $\Lambda_n^3$, we use that $\norm{\oQ^n_z}_{\Cc^\alpha} \leq c \beta^n$ for $|z| \leq \xi_0$, where $c>0$ and $0<\beta<1$ are constants, see Proposition \ref{prop:spectral-decomp}. Therefore, for $|\xi|\leq \xi_0\sqrt n$,
	$$\Big\| \oQ_{{i\xi\over \sqrt n}}^n(\Phi_{n,\xi} +\Phi_{n}^{\star}  ) \Big\|_\infty \lesssim \beta^n\norm{\Phi_{n,\xi} +\Phi_{n}^{\star}  }_{\Cc^\alpha} \leq 2 C_\zeta \beta^n  \sqrt[6] n,    $$
which gives
	$$\sup_{t\in\R} \big|\Lambda_n^3(t)\big|\lesssim \int_{-\xi_0 \sqrt n}^{\xi_0 \sqrt n}  2 C_\zeta \beta^n  \sqrt[6] n \,\diff \xi = 4\xi_0 C_\zeta \sqrt n  \beta^n  \sqrt[6] n\lesssim \frac{C_\zeta}{\sqrt n}.$$

	In order to bound  $\Lambda_n^4$,  we use that, after Proposition \ref{prop:spec-Pxi}, there are constants $C_\delta>0$ and $0<\rho_\delta<1$ such that $\norm{\oP^n_{i\xi}}_{\Cc^\alpha}\leq C_\delta \rho_\delta^n$ for all $\xi_0\leq |\xi|\leq \delta^{-2}$ and $n \geq 1$. Therefore,
	$$\sup_{t\in\R}  \big|\Lambda_n^4(t)\big|\lesssim \int_{\xi_0\sqrt n \leq|\xi|\leq\delta^{-2} \sqrt n} C_\delta \rho_\delta^n \sqrt[6] n\,\diff \xi\leq 2\delta^{-2}\sqrt n C_\delta \rho_\delta^n C_\zeta \sqrt[6] n\lesssim \frac{C_{\zeta,\delta}'}{\sqrt n},$$
	for some constant $C_{\zeta,\delta}'>0$.
	
	The modulus of the term $\Lambda_n^5$ is clearly $\lesssim 1/\sqrt n$, so it only remains to estimate $\Lambda_n^1$. For every $t \in \R$, we have $$\big| \Lambda_n^1(t) \big|\leq \Gamma_n^1+\Gamma_n^2+\Gamma_n^3,$$ where 
	$$\Gamma_n^1:= {1\over 2\pi}\int_{-\xi_0 \sqrt n}^{\xi_0 \sqrt n} \Big| \widehat{\psi_{\delta}^+}\Big({\xi\over \sqrt n}\Big) \Big| \, \big|\lambda_{{i\xi\over \sqrt n}}^n \big| \cdot\Big| \oN_0(\Phi_{n,\xi} +\Phi_{n}^{\star}  )- 1 \Big|  \,\diff \xi , $$
	$$\Gamma_n^2:= {1\over 2\pi}\int_{-\xi_0 \sqrt n}^{\xi_0 \sqrt n} \big|\lambda_{{i\xi\over \sqrt n}}^n\big|\cdot \Big| \widehat{\psi_{\delta}^+}\Big({\xi\over \sqrt n}\Big) -\widehat{\psi_{\delta}^+}(0) \Big|  \,\diff \xi $$
	and
	$$\Gamma_n^3:= {1\over 2\pi}\int_{-\xi_0 \sqrt n}^{\xi_0 \sqrt n} \big| \widehat{\psi_{\delta}^+}(0) \big| \cdot\Big| e^{-i\sqrt n\gamma}\lambda_{{i\xi\over \sqrt n}}^n-    e^{-{ \varrho^2\xi^2 \over 2}}\Big|  \,\diff \xi. $$
	
	Recall that $\chi_k$ is bounded by $1$ and is supported by $ \Tc_k^\zeta \subset \B(H_y,e^{-(k-1)\zeta})$. Therefore,
$$\oN_0 \chi_k = \int_{\P^{d-1}} \chi_k \, \diff \nu \leq \nu \big(\B(H_y,e^{-(k-1) \zeta})\big)  \lesssim e^{-k\zeta \eta},$$
where in the last step we have used Proposition \ref{prop:regularity}.
	
	Using \eqref{eq:psi_xi+psi_T-2},  we get 
	\begin{align*}
	\Big| \oN_0(\Phi_{n,\xi} +\Phi_{n}^{\star}  )- 1 \Big| &= \Big| \oN_0(\Phi_{n,\xi} +\Phi_{n}^{\star}  )-\oN_0 \mathbf 1\Big|\leq \sum_{0\leq k\leq B\zeta^{-1}\log n} \big| e^{-i \xi{k\zeta\over \sqrt n}}-1 \big|\oN_0 \chi_k \\
	&\lesssim \sum_{ k \geq 0} |\xi|{k\zeta\over \sqrt n}e^{-k \zeta\eta}\leq c_\zeta {|\xi|\over \sqrt n},
	\end{align*}
	for some constant $c_\zeta>0$ independent of $n$.
	
	Using that $\big|\lambda_{{i\xi\over \sqrt n}}^n\big|\leq e^{-{\varrho^2\xi^2\over 3}}$ for $|\xi|\leq \xi_0 \sqrt n$ (Lemma \ref{lemma:lambda-estimates}) and that  $\|\widehat{\psi^+_{\delta}}\|_{\Cc^1}$  is uniformly bounded,  we get that 
	$$ \Gamma_n^1\lesssim  \int_{-\xi_0 \sqrt n}^{\xi_0 \sqrt n}  \|\widehat{\psi^+_{\delta}}\|_{\infty} e^{-{\varrho^2\xi^2\over 3}} c_\zeta {|\xi|\over \sqrt n}  \,\diff \xi\lesssim {c_\zeta\over \sqrt n}$$
	and
	$$\Gamma_n^2\lesssim  \int_{-\xi_0 \sqrt n}^{\xi_0 \sqrt n} e^{-{\varrho^2\xi^2\over 3}} {|\xi|\over \sqrt n} \|\widehat{\psi^+_{\delta}}\|_{\Cc^1} \,\diff \xi\lesssim {1\over \sqrt n}.$$

	The bound $\Gamma_n^3\lesssim 1/\sqrt n$ follows by splitting the integral along the intervals $|\xi|\leq \sqrt[6] n$ and $\sqrt[6] n< |\xi| \leq \xi_0\sqrt n$ and using Lemma \ref{lemma:lambda-estimates}.
	
	 We conclude that $$\sup_{t\in\R} \big|\Lambda_n^1(t)\big|\lesssim \frac{c_\zeta'}{\sqrt n},$$ for some constant $c_\zeta'>0$ independent of $n$.
	
	\medskip
	
	Gathering the above estimates, we obtain $$\sup_{t\in\R}\big|\oR_n(t)-\oS_n(t) \big|\lesssim \frac{c_\zeta'}{\sqrt n} + {C_\zeta\over \sqrt[3] n} +  \frac{C_\zeta}{\sqrt n} +  \frac{C_{\zeta,\delta}'}{\sqrt n} + \frac{1}{\sqrt n}.$$
Hence,  the above quantity is bounded by  $C_{\zeta,\delta} / \sqrt[3] n$ for some constant $C_{\zeta,\delta} > 0$. This finishes the proof of the lemma.
		\end{proof}
	
	The above estimates are enough to obtain the desired upper bound.
	
	\begin{proof}[Proof of Proposition \ref{prop:LLT-upper-limit}]

	Fix $0<\delta<1$ and $0 < \zeta \leq 1$ as in  the beginning of this subsection. Lemmas \ref{lemma:llt-ineq-1} and \ref{lemma-R-S} and the fact that $\oB_n(t) \leq \oR_n(t)$ give that
	$$\oA_n(t) \leq \oS_n(t) + {C_{\zeta,\delta}\over \sqrt[3] n}+{C_1 \over \sqrt n} \quad \text{for all } \,\, t \in \R.$$	
	Recall, from \eqref{eq:S-def},  that $\oS_n(t) = {1\over \sqrt{2\pi} \, \varrho} e^{-{t^2\over2\varrho^2 n}}\int_{\R} \psi_{\delta}^+ (u)\,\diff u$ and $\int_{\R} \psi (u)\,\diff u = b-a+3\zeta$. Hence, for every fixed $n$ and $\zeta$,  
	$$ \Big|  \oS_n(t)- e^{-{t^2\over2\varrho^2 n}}{b-a+3\zeta\over \sqrt{2\pi} \, \varrho} \Big| \leq {1\over \sqrt{2\pi} \, \varrho}  \big\|   \psi_{\delta}^+ -\psi\big\|_{L^1}.$$
	
	We deduce that 
	$$ \oA_n(t) -   e^{-\frac{t^2}{2 \varrho^2 n}} {b-a\over \sqrt{2 \pi}\,\varrho} \leq  e^{-{t^2\over2\varrho^2 n}} {3\zeta\over \sqrt{2\pi} \, \varrho} +   {1\over \sqrt{2\pi} \, \varrho}  \big\|   \psi_{\delta}^+ -\psi\big\|_{L^1} +   {C_{\zeta,\delta}\over \sqrt[3] n}+{C_1 \over \sqrt n}, $$ so
	$$\limsup_{n\to \infty}  \sup_{t\in\R} \bigg( \oA_n(t) -   e^{-\frac{t^2}{2 \varrho^2 n}} {b-a\over \sqrt{2 \pi}\,\varrho} \bigg) \leq     {3\zeta\over \sqrt{2\pi} \, \varrho} + {1\over \sqrt{2\pi} \, \varrho}  \big\|   \psi_{\delta}^+ -\psi\big\|_{L^1}.$$
	
	Since $0<\delta<1$ and $0 < \zeta \leq 1$ are arbitrary and $\big\|   \psi_{\delta}^+ -\psi\big\|_{L^1}$ tends to zero as $\delta \to 0$, the proposition follows.
		\end{proof}

\subsection{Lower bound} \label{subsec:LLT-lower-limit}

 We now deal with the lower bound in the limit in \eqref{eq:LLT-main-limit}. 
 
 \begin{proposition} \label{prop:LLT-lower-limit}
 Let $\oA_n(t)$ be as above. Then, $$\liminf_{n\to \infty}\inf_{t\in\R}\bigg( \oA_n(t)  -   e^{-\frac{t^2}{2 \varrho^2 n}} {b-a\over \sqrt{2 \pi}\,\varrho} \bigg) \geq 0.$$
 \end{proposition}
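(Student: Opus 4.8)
The plan is to mirror the proof of Proposition \ref{prop:LLT-upper-limit}, the only structural change being that the target intervals are now \emph{shrunk} rather than enlarged, so as to absorb the oscillation of $\log d(S_n x, H_y)$ over the support of each $\chi_k$; as a bonus, for a lower bound the tail $k>B\zeta^{-1}\log n$ can simply be discarded instead of estimated.

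First I would fix a small $0<\zeta\leq1$ with $\zeta<(b-a)/4$ and take $\psi\colon\R\to\R_{\geq0}$ to be the continuous, piecewise‑affine trapezoid equal to $1$ on $[a+2\zeta,b-2\zeta]$, supported in $[a+\zeta,b-\zeta]\subset[a-2,b+2]$, with $0\leq\psi\leq1$ and $\int_\R\psi(u)\,\diff u=b-a-3\zeta$. Setting $\psi_{t,k}(u):=\psi(u+t-k\zeta)$ one has $\supp\psi_{t,k}\subseteq[a-t+(k+1)\zeta,\,b-t+(k-1)\zeta]$, so that on the event $\{\chi_k(S_nx)>0\}$, where $-(k+1)\zeta\leq\log d(S_nx,H_y)\leq-(k-1)\zeta$, the condition $\sigma(S_n,x)-n\gamma\in\supp\psi_{t,k}$ forces $t+\sigma(S_n,x)+\log d(S_nx,H_y)-n\gamma\in[a,b]$. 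Since $0\leq\psi\leq1$, this gives, for every $k\geq0$,
\begin{equation*}
\mathbf 1_{t+\sigma(S_n,x)+\log d(S_nx,H_y)-n\gamma\in[a,b]}\,\chi_k(S_nx)\ \geq\ \psi_{t,k}\big(\sigma(S_n,x)-n\gamma\big)\,\chi_k(S_nx).
\end{equation*}
Summing over all $k\geq0$, using $\sum_{k\geq0}\chi_k=1$ off $H_y$ (and that the left indicator vanishes when $S_nx\in H_y$, as there $\log d=-\infty$), applying $\sqrt n\,\mathbf E(\cdot)$ via monotone convergence, and then discarding the non‑negative terms with $k>B\zeta^{-1}\log n$, I would obtain, with $\oE_n$ as in \eqref{eq:En-def},
\begin{equation*}
\oA_n(t)\ \geq\ \sum_{0\leq k\leq B\zeta^{-1}\log n}\oE_n\big(\psi_{t,k}\cdot\chi_k\big).
\end{equation*}

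Next I would apply Lemma \ref{lemma:conv-fourier-approx} to get a smooth $\psi^-_\delta\leq\psi$ with $\widehat{\psi^-_\delta}$ supported in $[-\delta^{-2},\delta^{-2}]$, with $\|\psi^-_\delta\|_\infty$, $\|\psi^-_\delta\|_{L^1}$ and $\|\widehat{\psi^-_\delta}\|_{\Cc^1}$ bounded uniformly in $\delta$ and $\zeta$ and $\|\psi^-_\delta-\psi\|_{L^1}\to0$ as $\delta\to0$. Put $\psi^-_{t,k}(u):=\psi^-_\delta(u+t-k\zeta)$ and, with $\Phi_{n,\xi}$ and $\Phi_n^\star$ as in \eqref{eq:Phi-xi-def} and \eqref{eq:Phi-star-def},
\begin{equation*}
\oR_n(t):=\sum_{0\leq k\leq B\zeta^{-1}\log n}\oE_n\big(\psi^-_{t,k}\cdot\chi_k\big)+\oE_n\big(\psi^-_{t,0}\cdot\Phi_n^\star\big).
\end{equation*}
Since $\psi^-_\delta\leq\psi$ gives $\sum_k\oE_n(\psi^-_{t,k}\chi_k)\leq\sum_k\oE_n(\psi_{t,k}\chi_k)$, and since $\Phi_n^\star$ is bounded by $1$ and supported in $\{d(\cdot,H_y)\leq en^{-B}\}$ — whence $|\oE_n(\psi^-_{t,0}\cdot\Phi_n^\star)|\leq\sqrt n\,\|\psi^-_\delta\|_\infty\,\mathbf E\big(\Phi_n^\star(S_nx)\big)\lesssim1/\sqrt n$ by the large deviation bound of Proposition \ref{prop:BQLDT}, taking $B$ large exactly as in Section \ref{sec:BE} — I would get $\oA_n(t)\geq\oR_n(t)-C/\sqrt n$ for a constant $C>0$ independent of $n$. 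By Fourier inversion, the change of variables $\xi\mapsto\xi/\sqrt n$ and \eqref{eq:markov-op-iterate}, $\oR_n(t)$ has the same integral representation as \eqref{eq:R-formula} with $\widehat{\psi^+_\delta}$ replaced by $\widehat{\psi^-_\delta}$. Consequently the argument of Lemma \ref{lemma-R-S} transfers verbatim (it uses only the uniform bound on $\|\widehat{\psi^\pm_\delta}\|_{\Cc^1}$, Proposition \ref{prop:spectral-decomp}, Lemma \ref{lemma:lambda-estimates}, Proposition \ref{prop:spec-Pxi}, and Lemma \ref{lemma:norm-Phi-2} together with the identity $\Phi_{n,0}+\Phi_n^\star=\mathbf 1$), yielding $\sup_t|\oR_n(t)-\oS_n(t)|\lesssim n^{-1/3}$ with $\oS_n(t)=\tfrac{1}{\sqrt{2\pi}\,\varrho}e^{-t^2/(2\varrho^2 n)}\int_\R\psi^-_\delta(u)\,\diff u$.

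Finally I would combine these: $\oA_n(t)\geq\oS_n(t)-C_{\zeta,\delta}n^{-1/3}-Cn^{-1/2}$ for all $t$, and since $\int_\R\psi^-_\delta\geq b-a-3\zeta-\|\psi^-_\delta-\psi\|_{L^1}$ and $0\leq e^{-t^2/(2\varrho^2 n)}\leq1$,
\begin{equation*}
\oA_n(t)-e^{-\frac{t^2}{2\varrho^2 n}}\frac{b-a}{\sqrt{2\pi}\,\varrho}\ \geq\ -\frac{3\zeta+\|\psi^-_\delta-\psi\|_{L^1}}{\sqrt{2\pi}\,\varrho}-\frac{C_{\zeta,\delta}}{\sqrt[3]n}-\frac{C}{\sqrt n}.
\end{equation*}
Taking $\liminf_{n\to\infty}\inf_{t\in\R}$, then letting $\delta\to0$ (so $\|\psi^-_\delta-\psi\|_{L^1}\to0$) and then $\zeta\to0$, the right‑hand side tends to $0$, which is the claim; as in Section \ref{sec:LLT}, all estimates are uniform in $x\in\P^{d-1}$ and $y\in(\P^{d-1})^*$. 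I do not expect a genuinely new obstacle: the only points needing care are the same as in the upper bound — choosing $\zeta$ small enough that the trapezoid $\psi$ with top $[a+2\zeta,b-2\zeta]$ is well defined, and retaining the auxiliary term $\oE_n(\psi^-_{t,0}\cdot\Phi_n^\star)$ inside $\oR_n$ precisely so that $\Phi_{n,0}+\Phi_n^\star=\mathbf 1$ and Lemma \ref{lemma-R-S} applies unchanged.
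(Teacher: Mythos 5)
Your proposal is correct and follows essentially the same route as the paper's proof: the same shrunken trapezoid (the paper's $\widetilde\psi$ supported in $[a+\zeta,b-\zeta]$), the same translates $\widetilde\psi_{t,k}$, the retention of the $\Phi_n^\star$ term so that $\Phi_{n,0}+\Phi_n^\star=\mathbf 1$ and the argument of Lemma \ref{lemma-R-S} applies verbatim (this is exactly Lemma \ref{lemma-R-S-2}), and the same $\delta\to0$, $\zeta\to0$ limiting step. The only deviation is cosmetic: you sum over all $k$ and discard the non-negative tail, while the paper truncates at $k\leq B\zeta^{-1}\log n-1$ and separately estimates the boundary term $\chi_{k_0}$ and the $\Phi_n^\star$ term, yielding the same $O(1/\sqrt n)$ error.
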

 
 The argument is a variation of the one used in the proof of Proposition \ref{prop:LLT-upper-limit}, but the upper approximations used above will be replaced by analogous lower approximations. We now give the details.

Let $0< \zeta \leq 1$ be a small constant and define $\widetilde \psi:\R\to\R_{\geq 0}$ by
	 \begin{align*}
	\widetilde \psi (u)= &
	\begin{cases}
	u/\zeta -(a+\zeta)/\zeta     & \text{for}     \quad     u\in[a+\zeta,a+2\zeta]              \\
	1             & \text{for} \quad        u\in[a+2\zeta,b-2\zeta]          \\
	-u/\zeta +(b-\zeta)/\zeta       & \text{for}  \quad           u\in [b-2\zeta,b-\zeta]
\\
	0             & \text{for} \quad        u\in \R \setminus [a+\zeta,b-\zeta].
	\end{cases}
	\end{align*}
The function $\widetilde \psi$ is Lipschitz and piecewise affine. Moreover,  $0 \leq \widetilde \psi \leq 1$,  its support  is contained in $[a+\zeta,b-\zeta] \subset [a,b]$ and  $\int_{\R} \widetilde \psi (u)\,\diff u = b-a - 3\zeta$.

	For $t\in\R$ and $k\in\N$, consider the translations
	$$\widetilde\psi_{t,k}(u):=\widetilde\psi(u+t-k\zeta).$$

	Then, for fixed $t,u\in\R$, we have that $\widetilde \psi_{t,k}(u)\neq 0$ for only finitely many $k$'s and $\mathbf 1_{[a-t+(k+1)\zeta, b-t+(k-1)\zeta]} \geq \widetilde\psi_{t,k}$.
	
	Let $\chi_k$ be as in Lemma \ref{lemma:partition-of-unity-2} and $\Phi_n^\star$ be the function defined in \eqref{eq:Phi-star-def}. Set
	$$\widetilde \oB_n(t):=  \sum_{0\leq k\leq B\zeta^{-1}\log n} \oE_n\big(\widetilde \psi_{t,k}\cdot\chi_k  \big)+   \oE_n\big(\widetilde \psi_{t,0}\cdot \Phi_n^\star \big),$$
	where $\oE_n$ is defined in \eqref{eq:En-def}.
	
	\begin{lemma} \label{lemma:llt-ineq-2}
	There exists a constant $C_2>0$,  independent of $n$ and $\zeta$,  such that,  for all $t \in \R$,  $$\oA_n(t) \geq \widetilde \oB_n(t) - C_2 / \sqrt n.$$
	\end{lemma}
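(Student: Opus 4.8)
The plan is to run the argument of Lemma \ref{lemma:llt-ineq-1} with every inequality reversed, replacing the upper approximation $\psi$ by the lower approximation $\widetilde\psi$. The heart of the matter is the pointwise estimate dual to the one used there: I claim that for all $u,t\in\R$, $w\in\P^{d-1}$ and $k\geq 0$ one has
$$\widetilde\psi_{t,k}(u)\,\chi_k(w)\ \leq\ \mathbf 1_{t+u+\log d(w,H_y)\in[a,b]}\,\chi_k(w).$$
This is trivial unless $\chi_k(w)>0$ and $\widetilde\psi_{t,k}(u)>0$. In that case $w\in\Tc_k^\zeta$ gives $-(k+1)\zeta<\log d(w,H_y)<-(k-1)\zeta$, while $\widetilde\psi_{t,k}(u)>0$, together with $\supp\widetilde\psi\subset[a+\zeta,b-\zeta]$, forces $u+t-k\zeta\in(a+\zeta,b-\zeta)$, that is, $u\in\big(a-t+(k+1)\zeta,\,b-t+(k-1)\zeta\big)$. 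Adding these two bounds yields $t+u+\log d(w,H_y)\in(a,b)\subset[a,b]$, so the indicator on the right equals $1\geq\widetilde\psi_{t,k}(u)$, and multiplying by $\chi_k(w)>0$ proves the claim.

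Next I would sum this over $0\leq k\leq B\zeta^{-1}\log n$. Since the $\chi_k$ are non-negative with $\sum_{k\geq 0}\chi_k\leq 1$ on $\P^{d-1}$ (it equals $1$ off $H_y$ and vanishes on $H_y$), the right-hand side is bounded above by $\mathbf 1_{t+u+\log d(w,H_y)\in[a,b]}$. Substituting $u=\sigma(S_n,x)-n\gamma$ and $w=S_n x$, applying $\sqrt n\,\mathbf E(\cdot)$, and using the decomposition \eqref{eq:coeff-split}, I obtain
$$\sum_{0\leq k\leq B\zeta^{-1}\log n}\oE_n\big(\widetilde\psi_{t,k}\cdot\chi_k\big)\ \leq\ \oA_n(t).$$
Note that, in contrast with Lemma \ref{lemma:llt-ineq-1}, no large-deviation truncation of the main term is needed here, because restricting to a smaller event only makes $\oA_n(t)$ smaller.

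Finally, it remains to absorb the auxiliary term $\oE_n\big(\widetilde\psi_{t,0}\cdot\Phi_n^\star\big)$ present in $\widetilde\oB_n(t)$. By Lemma \ref{lemma:norm-Phi-2}, $\Phi_n^\star$ is supported on $\{w:\log d(w,H_y)\leq -B\log n+1\}$, and, as recalled at the beginning of Subsection \ref{subsec:LLT-upper-limit}, once $B$ is fixed large the probability that $S_n x$ belongs to this set is $\lesssim 1/n$. Since $\norm{\widetilde\psi_{t,0}}_\infty\leq 1$, this gives $0\leq \oE_n\big(\widetilde\psi_{t,0}\cdot\Phi_n^\star\big)\lesssim 1/\sqrt n$, uniformly in $t$; as $\norm{\widetilde\psi}_\infty\leq 1$ for every $\zeta$ and the support of $\Phi_n^\star$ (hence the large-deviation bound used) does not depend on $\zeta$, the implied constant is independent of $\zeta$ as well. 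Combining, $\widetilde\oB_n(t)=\sum_{0\leq k\leq B\zeta^{-1}\log n}\oE_n(\widetilde\psi_{t,k}\cdot\chi_k)+\oE_n(\widetilde\psi_{t,0}\cdot\Phi_n^\star)\leq \oA_n(t)+C_2/\sqrt n$ for a suitable $C_2>0$ independent of $n$ and $\zeta$, which is exactly the assertion. I expect no serious obstacle here — this is genuinely the easier of the pair — the only mildly delicate point being to keep track of the uniformity of $C_2$ in $\zeta$.
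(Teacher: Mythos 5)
Your proof is correct and follows essentially the same route as the paper: the same support-based pointwise comparison of $\widetilde\psi_{t,k}\cdot\chi_k$ with the indicator of $\{t+\sigma(S_n,x)+\log d(S_n x,H_y)-n\gamma\in[a,b]\}$ (using that $\chi_k$ lives on $\Tc_k^\zeta$ and that $\widetilde\psi$ is supported in $[a+\zeta,b-\zeta]$), followed by the large-deviation bound for the $\oE_n\big(\widetilde\psi_{t,0}\cdot\Phi_n^\star\big)$ term. The only (harmless) difference is that by summing all $k\leq B\zeta^{-1}\log n$ directly against the full indicator you bypass the paper's intermediate truncation by $\mathbf 1_{\log d(S_n x,H_y)\geq -B\log n}$, and hence the extra boundary term $\oE_n\big(\widetilde\psi_{t,k_0}\cdot\chi_{k_0}\big)$ that the paper estimates separately.
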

	
	\begin{proof}
	Using  \eqref{eq:coeff-split} and the definition of $\oA_n(t)$,  it follows that
	$$	\oA_n(t) \geq \sqrt{n}  \, \mathbf E \Big(  \mathbf 1_{t+ \sigma(S_n,x) +\log d(S_n x, H_y) - n \gamma\in [a, b]}\mathbf 1_{\log d(S_n x, H_y)\geq -B\log n} \Big).$$
		Recall that  when $S_n x\in\supp(\chi_k)$, one has $-(k+1) \zeta \leq \log d(S_n x,H_y) \leq - (k-1) \zeta$,  so
	$$  \mathbf 1_{t+ \sigma(S_n,x) +\log d(S_n x, H_y) - n \gamma\in [a, b]}\geq \mathbf 1_{\sigma(S_n,x)  - n \gamma\in [a-t+(k+1)\zeta, b-t+(k-1)\zeta]} \geq \widetilde\psi_{t,k}\big(\sigma(S_n,x)  - n \gamma\big).      $$
	Using that $\mathbf 1_{\log d(w, H_y)\geq -B\log n}  \geq \sum_{0\leq k\leq B\zeta^{-1}\log n - 1} \chi_k(w)$, it follows that
	\begin{align*}
	\mathbf 1_{t+ \sigma(S_n,x) +\log d(S_n x, H_y) - n \gamma\in [a, b]} \mathbf 1_{\log d(S_n x, H_y)\geq -B\log n} 
	\geq   \sum_{0\leq k\leq B\zeta^{-1}\log n-1} \widetilde \psi_{t,k}\big(\sigma(S_n,x)-n\gamma\big)\chi_k(S_n x)  . 
	\end{align*}
	
	Therefore, 	if  $k_0:= \lfloor B \zeta^{-1} \log n \rfloor$,  then
	\begin{equation*}
	\oA_n(t) \geq	 \sum_{0\leq k\leq B\zeta^{-1}\log n-1} \oE_n\big(\widetilde\psi_{t,k}\cdot\chi_k  \big) = \widetilde \oB_n(t) -   \oE_n\big(\widetilde \psi_{t,k_0}\cdot\chi_{k_0}  \big) -   \oE_n\big(\widetilde \psi_{t,0}\cdot \Phi_n^\star \big).
	\end{equation*}
	
	Arguing as in the proof of Lemma \ref{lemma:llt-ineq-1}, we see that the last two terms above are $\gtrsim - 1 / \sqrt n$. The lemma follows.
 	\end{proof}		
	
		Let $0<\delta<1$. By Lemma \ref{lemma:conv-fourier-approx},  there exists a smooth function $\widetilde \psi^-_{\delta}$  such that $\widehat {\widetilde \psi^-_{\delta}}$ has support in $[-\delta^{-2},\delta^{-2}]$,  $$\widetilde \psi^-_{\delta} \leq \widetilde\psi,\quad \lim_{\delta\to 0} \widetilde \psi^-_{\delta} =\widetilde  \psi    \quad \text{and} \quad  \lim_{\delta\to 0} \big \|\widetilde \psi^-_{\delta} -\widetilde\psi \big \|_{L^1} = 0.$$ 
	Moreover, $\norm{\widetilde \psi_{\delta}^-}_\infty$, $\norm{ \widetilde\psi_{\delta}^-}_{L^1}$ and $\|\widehat{ \widetilde \psi^-_{\delta}}\|_{\Cc^1}$ are bounded by a constant independent of $\delta$ and $\zeta$ since the support of $\psi$ is contained in $[a,b]$. We warn that, even if $\widetilde \psi$ is non-negative,   $\widetilde \psi^-_{\delta}$ might take negative values.
	
  	For $t\in\R$ and $k\in\N$, consider the translations
	$$ \widetilde\psi_{t,k}^-(u):= \widetilde \psi_{\delta}^-(u+t-k\zeta)$$ and define
	$$\widetilde \oR_n(t):= \sum_{0\leq k\leq B\zeta^{-1}\log n} \oE_n\big(\widetilde \psi_{t,k}^-\cdot\chi_k  \big)+  \oE_n\big(\widetilde \psi_{t,0}^-\cdot \Phi_n^\star \big)  $$ and  $$\widetilde \oS_n (t):={1\over 2\pi}\widehat{\widetilde  \psi_{\delta}^-}(0)  \int_{-\infty}^\infty e^{i\xi{t\over \sqrt n}} e^{-{ \varrho^2\xi^2 \over 2}}  \,\diff \xi={1\over \sqrt{2\pi} \, \varrho} e^{-{t^2\over2\varrho^2 n}}\int_{\R} \widetilde  \psi_{\delta}^- (u)\,\diff u.$$

	\begin{lemma} \label{lemma-R-S-2}
		Fix $0< \delta < 1$ and $0<\zeta \leq 1$ small enough. Then, there exists a constant $\widetilde C_{\zeta,\delta}>0$ such that, for all $n \geq 1$,
	$$\sup_{t\in\R}\big| \widetilde\oR_n(t)- \widetilde \oS_n(t) \big| \leq {\widetilde C_{\zeta,\delta}\over \sqrt[3] n}.$$
	\end{lemma}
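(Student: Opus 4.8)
The plan is to run, almost verbatim, the proof of Lemma~\ref{lemma-R-S}, since the lower approximations $\widetilde\psi^-_\delta$ of $\widetilde\psi$ have exactly the analytic properties that were used there for $\psi^+_\delta$. First I would record the Fourier--integral representation of $\widetilde\oR_n(t)$: applying the Fourier inversion formula to $\widetilde\psi^-_\delta$, Fubini's theorem, the iterate identity \eqref{eq:markov-op-iterate}, and the change of variables $\xi\mapsto\xi/\sqrt n$, and using that $\widehat{\widetilde\psi^-_\delta}$ is supported by $[-\delta^{-2},\delta^{-2}]$, one obtains the exact analogue of \eqref{eq:R-formula},
\[
\widetilde\oR_n(t)={1\over 2\pi}\int_{-\delta^{-2}\sqrt n}^{\delta^{-2}\sqrt n} \widehat{\widetilde\psi_{\delta}^-}\Big({\xi\over \sqrt n}\Big)\, e^{i\xi{t\over \sqrt n}}\, e^{-i\sqrt n\gamma}\,\oP_{{i\xi\over \sqrt n}}^n\big(\Phi_{n,\xi} +\Phi_{n}^{\star}\big)(x)\,\diff \xi .
\]
I would point out that this step uses only the support and boundedness of $\widehat{\widetilde\psi^-_\delta}$, so the fact that $\widetilde\psi^-_\delta$ may take negative values is irrelevant, and that the hypothesis ``$\zeta$ small enough'' (concretely $\zeta\le (b-a)/4$) is exactly what makes $\widetilde\psi$, hence $\widetilde\psi^-_\delta$, well defined.

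Next I would insert the spectral decomposition $\oP_{i\xi/\sqrt n}^n=\lambda_{i\xi/\sqrt n}^n\oN_{i\xi/\sqrt n}+\oQ_{i\xi/\sqrt n}^n$ of Proposition~\ref{prop:spectral-decomp} on the range $|\xi|\le\xi_0\sqrt n$, and split
\[
\widetilde\oR_n(t)-\widetilde\oS_n(t)=\Lambda_n^1(t)+\Lambda_n^2(t)+\Lambda_n^3(t)+\Lambda_n^4(t)+\Lambda_n^5(t)
\]
by the same five formulas as in the proof of Lemma~\ref{lemma-R-S}, with $\widehat{\psi^+_\delta}$ replaced by $\widehat{\widetilde\psi^-_\delta}$ throughout. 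Then each term is bounded exactly as before: $\Lambda_n^2$ using the analyticity of $\xi\mapsto\oN_{i\xi}$ together with $\norm{\Phi_{n,\xi}+\Phi_n^\star}_{\Cc^\alpha}\le C_\zeta n^{\alpha B}\le C_\zeta n^{1/6}$ from Lemma~\ref{lemma:norm-Phi-2} and $\big|\lambda_{i\xi/\sqrt n}^n\big|\le e^{-\varrho^2\xi^2/3}$ from Lemma~\ref{lemma:lambda-estimates}; $\Lambda_n^3$ using $\norm{\oQ^n_z}_{\Cc^\alpha}\le c\beta^n$ from Proposition~\ref{prop:spectral-decomp}-(5); $\Lambda_n^4$ using Proposition~\ref{prop:spec-Pxi} on the compact set $\{\xi_0\le|\xi|\le\delta^{-2}\}$; $\Lambda_n^5$ using the Gaussian tail; and $\Lambda_n^1$ by the further splitting into $\Gamma_n^1+\Gamma_n^2+\Gamma_n^3$, where $\Gamma_n^1$ is controlled via $\oN_0\chi_k\le\nu\big(\B(H_y,e^{-(k-1)\zeta})\big)\lesssim e^{-k\zeta\eta}$ (Proposition~\ref{prop:regularity}) and identity \eqref{eq:psi_xi+psi_T-2}, while $\Gamma_n^2,\Gamma_n^3$ are controlled via Lemma~\ref{lemma:lambda-estimates}. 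In all of these, the only features of $\widetilde\psi^-_\delta$ that enter are the uniform (in $\delta$ and $\zeta$) bounds on $\norm{\widehat{\widetilde\psi^-_\delta}}_\infty$ and $\norm{\widehat{\widetilde\psi^-_\delta}}_{\Cc^1}$, which hold because $\supp\widetilde\psi\subset[a,b]$. Summing the five bounds and absorbing the worst exponent gives $\sup_t\big|\widetilde\oR_n(t)-\widetilde\oS_n(t)\big|\le\widetilde C_{\zeta,\delta}/\sqrt[3] n$.

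I do not anticipate a genuine obstacle here: the statement is the mirror image of Lemma~\ref{lemma-R-S} and every estimate transfers without change. The one point that deserves a word of care is that, unlike $\psi^+_\delta$, the lower approximation $\widetilde\psi^-_\delta$ need not be non-negative; but since every bound above is phrased in terms of moduli and of the $\infty$- and $\Cc^1$-norms of $\widehat{\widetilde\psi^-_\delta}$, this causes no difficulty. One should also check that $\widetilde\oS_n$ is matched to the $\Lambda_n^1$/$\Lambda_n^5$ split by the identity $\widehat{\widetilde\psi^-_\delta}(0)=\int_\R\widetilde\psi^-_\delta(u)\,\diff u$, which is immediate from the definition of the Fourier transform, so that $\widetilde\oS_n(t)$ is indeed the ``$\oN_0\mathbf 1$'' part of the main term.
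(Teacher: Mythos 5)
Your proposal is correct and follows essentially the same route as the paper: the paper derives the same Fourier-integral representation of $\widetilde\oR_n(t)$ and then simply repeats the proof of Lemma \ref{lemma-R-S} with $\widetilde\psi^-_\delta$ in place of $\psi^+_\delta$, exactly as you do. Your added remark that the possible negativity of $\widetilde\psi^-_\delta$ is harmless, since all estimates use only moduli and the uniform bounds on $\|\widehat{\widetilde\psi^-_\delta}\|_{\Cc^1}$, is a correct and worthwhile observation consistent with the paper's own caveat.
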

	
	\begin{proof}
	By the same computations as the ones from Subsection \ref{subsec:LLT-upper-limit}, we obtain the identity
	$$\widetilde \oR_n(t)= {1\over 2\pi}\int_{-\delta^{-2} \sqrt n}^{\delta^{-2} \sqrt n} \widehat {\widetilde \psi^-_{\delta}} \Big({\xi\over \sqrt n}\Big) e^{i\xi{t\over \sqrt n}}\cdot e^{-i\sqrt n\gamma}\oP_{{i\xi\over \sqrt n}}^n(\Phi_{n,\xi} +\Phi_{n}^{\star}  )(x) \,\diff \xi,$$
	where $\Phi_{n,\xi}$ and $\Phi_{n}^{\star}$ are defined in  \eqref{eq:Phi-star-def}  and \eqref{eq:Phi-xi-def} respectively. The proof of Lemma \ref{lemma-R-S} can be repeated by using  ${\widetilde \psi^-_{\delta}}$ instead of $\psi_\delta^+$. This yields the desired estimate.
	\end{proof}
	
	We can now obtain the lower bound.

\begin{proof}[Proof of Proposition \ref{prop:LLT-lower-limit}]
Lemmas \ref{lemma:llt-ineq-2} and \ref{lemma-R-S-2} and the fact that $\widetilde \oB_n(t) \geq \widetilde \oR_n(t)$ give that
	$$\oA_n(t) \geq \widetilde \oS_n(t) - {\widetilde C_{\zeta,\delta}\over \sqrt[3] n} - {C_2 \over \sqrt n}  \quad \text{for all } \,\, t \in \R.$$
	Arguing as in the proof of Proposition \ref{prop:LLT-upper-limit} and recalling that $\int_{\R} \widetilde \psi (u)\,\diff u = b-a - 3\zeta$,  we get that, for every fixed $n$ and $\zeta$,  
	$$  \Big| \widetilde \oS_n(t)- e^{-{t^2\over2\varrho^2 n}}{b-a - 3\zeta\over \sqrt{2\pi} \, \varrho} \Big| \leq {1\over \sqrt{2\pi} \, \varrho}  \big\|   \widetilde \psi^-_{\delta} -\widetilde\psi \big \|_{L^1}.$$
	
	Therefore,
	$$ \oA_n(t) -   e^{-\frac{t^2}{2 \varrho^2 n}} {b-a\over \sqrt{2 \pi}\,\varrho} \geq  - e^{-{t^2\over2\varrho^2 n}} {3\zeta\over \sqrt{2\pi} \, \varrho} -   {1\over \sqrt{2\pi} \, \varrho}  \big\|   \widetilde \psi^-_{\delta} -\widetilde\psi \big \|_{L^1} - {\widetilde C_{\zeta,\delta}\over \sqrt[3] n} - {C_2 \over \sqrt n}, $$ and
	$$\liminf_{n\to \infty}  \inf_{t\in\R} \bigg( \oA_n(t) -   e^{-\frac{t^2}{2 \varrho^2 n}} {b-a\over \sqrt{2 \pi}\,\varrho} \bigg)  \geq  - {3\zeta\over \sqrt{2\pi} \, \varrho} -   {1\over \sqrt{2\pi} \, \varrho}  \big\|   \widetilde \psi^-_{\delta} -\widetilde\psi \big \|_{L^1}.$$
	Since $0<\delta<1$ and $0 < \zeta \leq 1$ are arbitrary and $ \big\|   \widetilde \psi^-_{\delta} -\widetilde\psi \big \|_{L^1}$ tends to zero as $\delta \to 0$, the proposition follows.
	
\end{proof}

Now, the proof of  Theorem \ref{thm:LLT-coeff} can be concluded.

\begin{proof}[Proof of Theorem \ref{thm:LLT-coeff}]
 Recall that the conclusion of Theorem \ref{thm:LLT-coeff} is equivalent to the limit \eqref{eq:LLT-main-limit}. Denote $\mathbf f_n(t):= \oA_n(t)  -   e^{-\frac{t^2}{2 \varrho^2 n}} {b-a\over \sqrt{2 \pi}\,\varrho}$. Propositions \ref{prop:LLT-upper-limit} and  \ref{prop:LLT-lower-limit} give that $$\limsup_{n\to \infty}\sup_{t\in\R} \mathbf f_n(t) \leq 0 \quad \text{and} \quad \liminf_{n\to \infty}\inf_{t\in\R} \mathbf f_n(t) \geq 0$$ respectively. This clearly implies that $\lim_{n\to \infty}\sup_{t\in\R} |\mathbf f_n(t)| = 0$, yielding  \eqref{eq:LLT-main-limit}.  It is clear  that all of our estimates are uniform in $x \in \P^{d-1}$ and $y\in (\P^{d-1})^*$.  The proof of the theorem is finished.
\end{proof}


\end{document}